\definecolor{mygreen}{RGB}{28,172,0} 
\definecolor{mylilas}{RGB}{170,55,241}
\DeclareFixedFont{\ttb}{T1}{txtt}{bx}{n}{12} 
\DeclareFixedFont{\ttm}{T1}{txtt}{m}{n}{12}  
\newtheorem{theorem}{Theorem}
\newtheorem{corollary}{Corollary}
\newtheorem{proposition}{Proposition}
\newtheorem{lemma}{Lemma}
\newtheorem{problem}{Problem}
\theoremstyle{definition}
\newtheorem{definition}{Definition}
\newtheorem{remark}{Remark}
\definecolor{mygreen}{RGB}{28,172,0} 
\definecolor{mylilas}{RGB}{170,55,241}
\DeclareMathOperator*{\argmin}{arg\,min}
\newacronym{otr}{OTR}{Online Traffic Routing}
\newacronym{otr-i}{OTR-I}{Online Traffic Routing with Identical Users}
\pgfplotsset{compat=1.14}
\pgfplotsset{scaled y ticks=false}
\definecolor{mycolor1}{rgb}{0.00000,0.44700,0.74100}%
\definecolor{mycolor2}{rgb}{0.85000,0.32500,0.09800}%
\definecolor{mycolor3}{rgb}{0.00000,0.44700,0.74100}%
\definecolor{mycolor4}{rgb}{0.00000,0.44700,0.74100}%
\definecolor{mycolor5}{rgb}{0.85000,0.32500,0.09800}%
\definecolor{mycolor6}{rgb}{0.92900,0.69400,0.12500}%
\definecolor{mycolor7}{rgb}{0.49400,0.18400,0.55600}%
\definecolor{mycolor8}{rgb}{0.46600,0.67400,0.18800}%
\definecolor{mycolor9}{rgb}{0.00000,0.44700,0.74100}%
\definecolor{mycolor10}{rgb}{0.85000,0.32500,0.09800}%
\definecolor{mycolor11}{rgb}{0.00000,0.44700,0.74100}%
\definecolor{mycolor12}{rgb}{0.85000,0.32500,0.09800}%
\newenvironment{hproof}{%
  \proof}{\endproof}
\definecolor{deepblue}{rgb}{0,0,0.5}
\definecolor{deepred}{rgb}{0.6,0,0}
\definecolor{deepgreen}{rgb}{0,0.5,0}
\renewcommand\footnotemark{}
\newif\ifarxiv   
\title{Online Traffic Routing: Deterministic Limits and Data-driven Enhancements}
\author{Devansh Jalota$^1$, Dario Paccagnan$^2$, Maximilian Schiffer$^3$,  and Marco Pavone$^1$%
\thanks{$^1$ Stanford University, USA; {\tt \{djalota, pavone\}@stanford.edu}.}%
\thanks{$^2$ Imperial College London, United Kingdom; {\tt d.paccagnan@imperial.ac.uk}.}%
\thanks{$^3$ Technical University of Munich, Germany; {\tt schiffer@tum.de}.}%
}
\date{}
\begin{document}

\maketitle

\begin{abstract}
    Over the past decade, GPS enabled traffic applications, such as Google Maps and Waze, have become ubiquitous and have had a significant influence on billions of daily commuters' travel patterns. A consequence of the online route suggestions of such applications, e.g., via greedy routing, has often been an increase in traffic congestion since the induced travel patterns may be far from the system optimum. Spurred by the widespread impact of traffic applications on travel patterns, this work studies online traffic routing in the context of capacity-constrained parallel road networks and analyzes this problem from two perspectives. First, we perform a worst-case analysis to identify the limits of deterministic online routing and show that the ratio between the total travel cost of the online solution of any deterministic algorithm and that of the optimal offline solution is unbounded, even in simple settings. This result motivates us to move beyond worst-case analysis. Here, we consider algorithms that exploit knowledge of past problem instances and show how to design data-driven algorithms whose performance can be quantified and formally generalized to unseen future instances. We present numerical experiments based on an application case for the San Francisco Bay Area to evaluate the performance of our approach. Our results show that the data-driven algorithms we develop outperform commonly used greedy online-routing algorithms.
\end{abstract}

\section{Introduction}

The emergence of traffic navigational applications in the past decade has had a transformational impact on billions of people's daily commutes. Particularly, mobile applications that instantaneously suggest routes to user queries have become increasingly popular as they enable free access to real-time traffic information. In the US alone, over 200 million people used Google Maps, Apple Maps, or Waze in 2018 \citep{maps-users-2018}. While the ubiquity of such GPS-enabled routing applications has made travel more convenient, reliable, and accessible, it has also resulted in an ever-growing influence on the traffic patterns of cities, and has often been associated with increasing traffic congestion \citep{Cabannes2017TheIO}. For instance, greedy routing \citep{Hart1968}, wherein route suggestions aim to minimize a user's travel time, often results in a traffic pattern far from the system optimum \citep[cf.][]{TAP,Nash48}. 

In parallel, the advent of big data analytics has led to a new paradigm in traffic routing, namely \textit{anticipatory} route guidance \citep{BILALI2019494,anticipatory-route-guidance}. In this context, the availability of prior traffic information enables the prediction of future demands and traffic conditions, which can then be incorporated in the routing recommendations. In related applications, e.g., online resource allocation, it has been shown that as more prior information on arriving input becomes available the loss due to online decision making decreases \citep{Hwang2018OnlineRA}. 

The impact of traffic navigational applications on travel patterns coupled with the value of prior information in improving the quality of online decisions motivates us to study online traffic routing for parallel road networks from two perspectives. First, we identify the limits of conventional deterministic algorithms, e.g., a greedy algorithm, from a competitiveness analysis lens. We then move beyond worst-case analysis, and introduce data-driven algorithms with probabilistic generalization guarantees. Further, we evaluate the performance of these algorithms through numerical studies. While the focus on parallel road networks may appear restrictive, such networks are of theoretical and practical significance. For instance, Pigou's network \citep{pigou}, a two-arc parallel network, has been used extensively in the selfish routing literature to study price of anarchy bounds \citep{selfish-routing} and more general parallel networks have been used to study Stackelberg routing strategies \citep{amin-stackelberg}. More broadly, analogues of parallel networks have been considered in several online resource allocation problems, including online knapsack \citep{online-knapsack-chakrabarty} and job scheduling~\citep{jobScheduling}. From a practical perspective, transportation networks are rarely parallel but can often be sufficiently approximated by a parallel network for specific use cases \citep{caltrans}, e.g., when multiple highways connect two populated areas, such as the highways connecting San Francisco and San Jose in California.



\subsection{Model} \label{modelling-framework}
We consider a graph with $M \geq 2$ parallel arcs and two vertices representing an origin and a destination. Travelling through arc $a$ requires $t_a$ time units, and each arc $a$ has a capacity $c_a$. We collect these parameters in $\mathcal{S} := \{ \{ c_a \}_{a \in [M]}, \{ t_a \}_{a \in [M]} \}$, where $[N] = \{1, 2, \ldots, N \}$ for any $N \in \mathbb{N}$, and order the arcs, without loss of generality, so that $t_1 \leq t_2 \leq ... \leq t_M$. We denote an input sequence of $n$ users by $\mathcal{I} := \{ \{ \theta_i \}_{i \in [n]}, \{ \tau_i \}_{i \in [n]} \}$, stating for each user $i$ its time of appearance $\tau_i\in\mathbb{R}_{\ge0}$ and its value-of-time $\theta_i \in \Theta$, where $\Theta$ is finite. We assume without loss of generality that $\tau_1 = 0 < \tau_2 < ... < \tau_n $, i.e., no two users arrive in the system at the same point in time. With this notation, the \acrlong{otr} (\acrshort{otr}) problem is as follows.
\begin{problem}\label{prob:otr} (\acrlong{otr} (\acrshort{otr})) Let the parameter set $\mathcal{S}$ and an input sequence $\mathcal{I}$ be given and define a problem instance. Users arrive sequentially in the system and must be irrevocably assigned to an arc at their time of appearance $\tau_i$. The number of users concurrently traversing an arc is limited to the capacity $c_a$ of the arc, and each user traverses an arc $a$ for $t_a$ time units. The \acrshort{otr} problem is to construct an assignment of users to arcs such that the total travel cost, i.e., the sum of the value-of-time weighted travel time of all users, is minimal.
\end{problem} 

If all users are identical, i.e., $\theta_i = \theta_{i'}$ for each $i, i' \in [n]$, we refer to this problem variant as \acrshort{otr-i}. %

We evaluate the efficacy of an algorithm $\mathbf{A}$ that solves Problem~\ref{prob:otr} via its competitive ratio 
$$CR(\mathbf{A}) = \max_{\mathcal{I} \in \Omega} \frac{ALG(\mathcal{I}, \mathbf{A})}{OPT(\mathcal{I})},$$
where $OPT(\mathcal{I})$ denotes the optimal offline travel cost for the \acrshort{otr} problem and $ALG(\mathcal{I}, \mathbf{A})$ denotes the travel cost of the online solution achieved by algorithm $\mathbf{A}$ on input sequence~$\mathcal{I}$. We say that an algorithm is $\alpha$-competitive if $CR(\mathbf{A}) \leq \alpha$. The input sequence $\mathcal{I}$ belongs to a set $\Omega$, which corresponds to user arrivals such that the value-of-time $\theta_i > 0$ and the arrival time $\tau_i \geq 0$ for all users $i$, with the first user arriving at time zero.

%


We compute $OPT(\mathcal{I})$ by solving the following optimization problem
\begin{mini!}|s|[2]<b>
	{\substack{x_{ia}, \\ \forall i \in [n], a \in [M]}}{\sum_{i = 1}^{n} \sum_{a =1}^{M} x_{ia} t_a \theta_i\label{eq:OPT-integer-Obj}}
	{\label{eq:Example3}}
	{OPT(\mathcal{I})=}
	\addConstraint{\sum_{a = 1}^{M} x_{ia}}{=1, \quad \forall i \in [n], \label{eq:prob-constraint}}
	\addConstraint{x_{ia}}{\in \{0, 1\}, \quad \forall i\in [n], a \in [M], \label{eq:binary-constraint}}
	\addConstraint{\sum_{i = 1}^n \mathbbm{1}_{\tau_{k} \in [\tau_{i}, \tau_{i} + t_a]} x_{ia}}{\leq c_a, \quad \forall a \in [M], \forall \tau_k \in \{\tau_1, ..., \tau_n \}, \label{eq:capacity-constraints}}
\end{mini!}
where we introduced binary variables $x_{ia}$ to denote whether user $i$ is assigned to arc $a$ ($x_{ia} = 1$) or not ($x_{ia}=0$). Here, \eqref{eq:prob-constraint} are assignment constraints,~\eqref{eq:binary-constraint} are binary constraints and~\eqref{eq:capacity-constraints} are capacity constraints. Throughout the manuscript, we assume that Problem~\eqref{eq:OPT-integer-Obj}-\eqref{eq:capacity-constraints} is feasible, i.e., the capacity constraints can be satisfied.

We conclude this section highlighting that our model mirrors the operation of commonly used traffic navigational applications such as Google Maps or Waze that suggest routes to users. Furthermore, our model can be used to describe the online operation of a centrally controlled smart mobility system, e.g., a fleet of autonomous taxis. Here, the traffic application or the fleet operator takes centralized routing decisions and it is assumed that users are not strategic, i.e., the users' values of time are known by the traffic navigational application or the fleet operator. While, in general, user's values of time may not be known, it can often be estimated~\citep{vot-learn}. Accordingly, most literature on traffic routing with heterogeneous users~\citep{heterogeneous-pricing-roughgarden,multicommodity-extension} make this assumption. Furthermore, in the context of heterogeneous user traffic routing, minimizing the weighted travel time of all users is also standard in the literature~\citep{METRP}.

\subsection{Contribution}
In this work, we develop algorithms for the \acrshort{otr} problem in two settings. We first focus on an adversarial setting and perform a worst-case competitive analysis of deterministic algorithms. We then focus on a stochastic user arrival setting and design a data-driven algorithm whose performance can be provably generalized to unseen instances.

In the adversarial setting, we first consider the \acrshort{otr-i} problem for a two-arc parallel network and show that a greedy algorithm, which allocates users to the least cost arc that is below capacity, is optimal, i.e., achieves a competitive ratio of one. We then consider other problem variants within \acrshort{otr}, including the settings of $M \geq 3$ arcs and non-identical users, and show that the ratio of the total travel cost of the online solution of any deterministic algorithm to that of the optimal offline solution is unbounded. We extend this worst-case analysis to general (non-parallel) road networks and general arc cost functions.

The unbounded competitive ratios encountered for the variants of the two-arc \acrshort{otr-i} problem motivate us to leverage the ever-increasing availability of traffic data, e.g., origin-destination travel information, to devise improved data-driven routing algorithms. In particular, given the availability of such prior travel information, we study the setting of stochastic arrivals, wherein users arrive according to some unknown probability distribution. In this setting, we develop both time-independent and time-dependent data-driven algorithms that learn an optimal parametrization for traffic routing from past training input sequences and utilize this learned information to make online decisions on unseen test sequences. In this context, we obtain probabilistic generalization guarantees for our data-driven algorithms to unseen input sequences drawn from the same probability distribution. 

Finally, we present numerical experiments to evaluate the performance of the data-driven and greedy algorithms for a range of problem settings, including an application case to the San Francisco Bay Area. The results indicate that both the data-driven algorithms achieve lower ratios between the travel costs of the online and optimal offline solutions as compared to that of the greedy algorithm. That is, both the time-independent and time-dependent data-driven algorithms outperform the greedy algorithm, thus highlighting the value of prior information in online algorithm design. Furthermore, the time-dependent algorithm achieves a better performance compared to its time-independent counterpart, although the latter has a superior probabilistic generalization guarantee. This observation points towards the importance of adjusting allocations based on changes in user arrival rates.

\subsection{Organization:} The remainder of this paper is as follows. Section~\ref{lit-review} reviews related literature. In Section~\ref{worst-case}, we study the limits of deterministic algorithms for variants of the \acrshort{otr-i} problem from a competitive-analysis perspective when user arrivals are adversarial. We then move to the setting wherein prior stochastic information on user arrivals is known and develop data-driven algorithms with generalization guarantees (Section~\ref{beyond-worst-case}). Finally, we evaluate the performance of the data-driven algorithms as compared to the greedy algorithm through numerical experiments in Section~\ref{main-numerical} and conclude the paper in Section~\ref{conclusions}.

\section{Literature Review} \label{lit-review}

This paper is closely related to online resource allocation problems and to beyond worst-case online algorithm design. We begin with a review of two dynamic resource allocation applications, namely the online knapsack and job scheduling problems, that are closely related to the model studied in this paper. We then survey results on competitive analysis of online algorithms and beyond worst-case methodologies, and discuss how they relate to this work.

In online knapsack problems, the goal is to maximize the total value of items irrevocably assigned to knapsacks with fixed capacities, without knowledge of the values of items that will arrive in the future \citep{online-knapsack-wierman,online-knapsack-chakrabarty}. While similar to the model we consider, when an item is assigned to a knapsack, the item stays in the knapsack indefinitely, i.e., until no item remains or until no remaining item can fit in any knapsack. In this regard, the online knapsack setting differs from the \acrshort{otr} problem since users allocated to arcs (or equivalently knapsacks) also depart from the arcs when they complete their trip.

In job scheduling problems, $n$ jobs need to be scheduled on $m$ machines to optimize a system objective, such as the total time spent by all jobs in the system. In the online setting, jobs are either assumed to arrive over time (online-time job scheduling), or they may be scheduled in a sequence (online-list job scheduling), cf. \cite{Pruhs03onlinescheduling}. Online-time job scheduling parallels the \acrshort{otr} problem in the user (or job) arrival process since users arrive at different points in time into the network. On the other hand, the allocation of users to arcs instantaneously upon arrival mirrors online-list job scheduling, wherein each job must be scheduled on a machine at the instant that it arrives. 
Thus, the online traffic routing setting we study combines different elements of the two commonly used online job scheduling models. 

Online resource allocation algorithms have traditionally been designed to perform well under worst-case assumptions, i.e., under the assumption that inputs are adversarially chosen. For instance, \citet{online-TSP-jaillet} develop algorithms for the online traveling salesman problem (TSP) with worst-case performance guarantees, while \citet{msvv} and \citet{primal-dual-buchbinder} obtain worst-case competitive ratios for online bipartite matching in the context of internet advertising. Since worst-case competitive analysis can be exceedingly pessimistic, \citet{Hwang2018OnlineRA} demonstrated the value of prior information in online decision making. More specifically, the authors show that the loss due to online decision making decreases with an increase in the proportion of available prior information on arriving input. In line with these works, we first study the \acrshort{otr} problem from a worst-case perspective. To improve the resulting worst-case guarantees, we additionally analyze a setting wherein prior information on the arriving input is known, which is a common informational assumption used in many online resource allocation applications~\citep{devanur-online-resource,motwani-scheduling}.

One approach used to perform beyond worst-case analysis \citep{beyond-comp-analysis,roughgarden2018worstcase} is that of stochastic arrivals, wherein the input arrives according to some potentially unknown probability distribution. Such stochastic assumptions on future input have been used to aid online algorithm design to improve on worst-case competitive ratios in many applications, including online TSP \citep{online-tsp-fair} and online bipartite matching \citep{TCS-057}. In particular, one approach has been to use the offline optimum for an expected instance to make online decisions \citep{stochastic-matching-beating}. In this work, we move beyond such techniques through data-driven algorithms that are trained on past input sequences rather than just on an expected sequence, thereby capturing the stochastic variations in the arriving input.

Another beyond worst-case model is the random order model, wherein the sequence of arrivals occurs according to a random permutation of an arbitrary input sequence. In this model, a small fraction of the entire input sequence is observable, which allows for predicting the pattern of future demand. For instance, \citet{devanur-adwords} use a fraction of the input sequence to train their online bipartite matching model and then use the learned parameters to make online decisions for the remaining unseen input. This approach was further extended to a repeated learning setting in the context of revenue management by \citet{online-agrawal}. Our data-driven approach mirrors the two-phase approach of model training and online decision making adopted in these works. However, there is a fundamental difference between our approach and that of \citet{devanur-adwords} and \citet{online-agrawal}. In particular, we use parameters learned from past input sequences to make prescriptive online decisions on unseen input sequences. In contrast, \citet{devanur-adwords} and \citet{online-agrawal} learn model parameters by observing only a fraction of a given input sequence to make decisions for the remaining input sequence. When making online decisions in the transportation context, we need to allocate users instantaneously. Thus, observing the first $\epsilon$ fraction of arriving users may not be viable. Instead, we can leverage past travel data, e.g., origin-destination and traffic flow data, to learn information on user arrivals and subsequently make online decisions.

This paper contributes to the aforementioned literature in several ways. With respect to online resource allocation literature, we propose a novel model for dynamic resource allocation motivated through online traffic routing applications. With regards to beyond worst-case online algorithm design, we propose data-driven algorithms based on the scenario approach~\citep[cf.][]{scenario-main}, which we believe is of independent interest and is broadly applicable to online resource allocation problems beyond the one studied in this paper. 



\section{Worst-Case Analysis of Online Traffic Routing Algorithms} \label{worst-case}

In this section, we consider the setting of adversarial user arrivals and study worst-case competitive ratios of deterministic algorithms for variants of the \acrshort{otr} problem. To this end, we first consider the \acrshort{otr-i} problem, wherein all users have identical values of time, and show that a greedy algorithm achieves a competitive ratio of one for a two arc parallel network (Section~\ref{identical-customers}). We then show that the ratio of the travel cost of any deterministic online algorithm's solution to that of the optimal offline solution can be arbitrarily large for problem settings beyond two arc parallel road networks with identical users (Section~\ref{Non-Identical-Customers}).

\subsection{Routing Identical Users in a Two-Arc Capacity Constrained Parallel Network} \label{identical-customers}

This section focuses on routing identical users in a two-arc parallel road network and presents a greedy algorithm that achieves a competitive ratio of one. In particular, we analyze the greedy algorithm, described in Algorithm~\ref{alg:greedy}, which allocates arriving users to the least cost arc that is below capacity. Since users are identical, we normalize $\theta_i$ to one for all agents $i$ in this section.
\begin{algorithm} 
\SetAlgoLined
\SetKwInOut{Input}{Input}\SetKwInOut{Output}{Output}
\textbf{When User $i \in [n]$ arrives:} \\
\quad \quad Allocate $i$ to the lowest cost arc that is below capacity
\caption{Greedy Algorithm}
\label{alg:greedy}
\end{algorithm}

We now present the main result of this section, which establishes that the greedy algorithm, described in Algorithm~\ref{alg:greedy}, is optimal for the two-arc \acrshort{otr-i} problem.

\begin{theorem} \label{thm:greedy-main-result} (Optimality of Greedy for \acrshort{otr-i})
Let $\mathbf{A}$ denote Algorithm~\ref{alg:greedy}. Then, $CR(\mathbf{A}) = 1$ for the \acrshort{otr-i} problem when the number of arcs $M$ in the network is two. 
\end{theorem}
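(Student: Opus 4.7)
The plan is to reduce the claim to a scheduling question and then prove it by an exchange argument. Since all users are identical, the total travel cost of a feasible assignment can be written as $n_1 t_1 + (n - n_1) t_2 = n t_2 - n_1 (t_2 - t_1)$, where $n_1$ denotes the number of users routed through arc 1. Because $t_1 \leq t_2$, the cost is minimized precisely when $n_1$ is maximized, so it suffices to show that greedy routes at least as many users through arc 1 as any feasible offline schedule $S^*$.

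I would prove this via an inductive exchange that transforms $S^*$ into greedy's schedule without ever decreasing $n_1$. Let $k$ be the first user at which $S^*$ disagrees with greedy. Because both schedules coincide on all users $i<k$, their arc-1 states at $\tau_k^-$ are identical. Greedy sends $k$ to arc 2 only when arc 1 is at capacity $c_1$; in that case $S^*$ also cannot place $k$ on arc 1 without violating capacity. Hence the disagreement must be: greedy places $k$ on arc 1 while $S^*$ places $k$ on arc 2. I would then modify $S^*$ by moving $k$ to arc 1. If this direct move is feasible, the cost strictly decreases (unless $t_1=t_2$, which trivializes the theorem), so we may assume the move would push arc 1 to $c_1+1$ at some later time. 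Letting $j>k$ be the earliest user that $S^*$ assigns to arc 1 for which this overflow occurs, I would simultaneously move $j$ to arc 2. Arc 1 remains feasible because the inserted interval $[\tau_k,\tau_k+t_1]$ and the removed interval $[\tau_j,\tau_j+t_1]$ cancel on their overlap, while the net cost change is $(t_1-t_2)+(t_2-t_1)=0$, preserving both $n_1$ and the total cost.

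The main technical obstacle is ensuring feasibility on arc 2 after the double swap, since $j$'s new interval $[\tau_j,\tau_j+t_2]$ extends past the vacated interval $[\tau_k,\tau_k+t_2]$, potentially overloading arc 2 during $(\tau_k+t_2,\tau_j+t_2]$. I would handle this by cascading the swap: any such overload involves a user $j'$ currently on arc 2 with $\tau_{j'}>\tau_k$, and reassigning $j'$ from arc 2 to arc 1 leverages the slack newly created on arc 1 over $(\tau_k+t_1,\tau_j+t_1]$ by removing $j$. Because each cascade step acts on strictly later users, the chain terminates, and $n_1$ is preserved throughout. Iterating the whole exchange over successive disagreements transforms any feasible $S^*$ into greedy without ever decreasing $n_1$, yielding $n_1^{\text{greedy}} \geq n_1^*$ and therefore $\mathrm{cost}(\text{greedy}) \leq \mathrm{cost}(S^*)$ for every feasible $S^*$. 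Hence $CR(\mathbf{A}) = 1$.
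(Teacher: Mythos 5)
Your reduction in the first paragraph is correct and is exactly the two-arc specialization of the paper's Lemma~\ref{lem:lemm1}: with identical users the cost is $n t_2 - n_1(t_2-t_1)$, so it suffices to show greedy maximizes the number of users on arc~1. The divergence is in how you establish that maximality. The paper (Lemma~\ref{lem:greed-opt}) proves a counting lower bound by induction on the users greedy sends to arc~2: for each such user $q_l$ there is a user $\tilde q_l \le q_l$ that \emph{any} feasible algorithm must send to arc~2, because greedy only leaves arc~1 when it is full and every feasible schedule that has kept more users on arc~1 up to that point would violate $c_1$. That argument never has to construct or repair a schedule, so arc~2's capacity never enters. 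You instead attempt a constructive exchange that transforms an arbitrary feasible $S^*$ into greedy while preserving feasibility and $n_1$. The first half of your exchange is sound: the first disagreement must be of the form ``greedy $\to$ arc~1, $S^*\to$ arc~2''; inserting $k$ on arc~1 and evicting the \emph{first} conflicting arc-1 user $j$ does keep arc~1 feasible (the occupancy is $O(t)+1\le c_1$ on $[\tau_k,\tau_j)$ by minimality of $j$, equals $O(t)$ on $[\tau_j,\tau_k+t_1]$, and equals $O(t)-1$ afterward).

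The genuine gap is the cascade used to restore feasibility on arc~2. Your justification --- that the overloading user $j'$ can be moved to arc~1 because of ``the slack newly created on arc~1 over $(\tau_k+t_1,\tau_j+t_1]$'' --- does not hold: $j'$ is only constrained to satisfy $\tau_{j'}>\tau_k$ (it occupies arc~2 at some time in $(\tau_k+t_2,\tau_j+t_2]$, so $\tau_{j'}$ may well exceed $\tau_j$ or lie far outside $(\tau_k+t_1,\tau_j+t_1]$), so the interval $[\tau_{j'},\tau_{j'}+t_1]$ need not sit inside the slack window, and arc~1 may be at capacity there, triggering a new arc-1 eviction. The claim that each cascade step acts on strictly later users and therefore terminates in a feasible schedule is asserted, not proved, and it is exactly where the difficulty of the theorem lives: when both $c_1$ and $c_2$ are binding, a chain of swaps can bounce a conflict back and forth between the two arcs (e.g., $c_1=c_2=1$, $t_1=10$, $t_2=100$, arrivals at $0,5,101,103$ already defeats the procedure). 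Your argument goes through when $c_2$ is effectively non-binding, since then the cascade is never invoked, but the theorem is stated for arbitrary capacities. To close the gap you would either need to prove that the cascade terminates in a feasible schedule (which in particular presupposes greedy's own feasibility), or replace the transformation by the paper's counting induction, which bounds $N_{2,\mathbf{A}'}$ from below for every feasible $\mathbf{A}'$ without ever touching arc~2's capacity.
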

We prove Theorem~\ref{thm:greedy-main-result} using two intermediary results. We first characterize a sufficient condition for an algorithm to achieve a competitive ratio of one for a general $M$-arc network and then show that the greedy algorithm satisfies this sufficient condition when $M=2$. To characterize the sufficient condition, we focus on the set of \textit{feasible} algorithms that satisfy Constraints~\eqref{eq:prob-constraint}-\eqref{eq:capacity-constraints}. We note that the set of \textit{feasible} algorithms is non-empty as Algorithm~\ref{alg:greedy} allocates users upon arrival to an arc below capacity and thus satisfies Constraints~\eqref{eq:prob-constraint}-\eqref{eq:capacity-constraints}. Let $\mathbf{A}$ be one such feasible algorithm and let $N_{a, \mathbf{A}}^{\mathcal{I}}$ denote the total number of users that are allocated by $\mathbf{A}$ to arc $a$ for input sequence~$\mathcal{I}$. Then, Lemma~\ref{lem:lemm1} provides a sufficient condition for $\mathbf{A}$ to achieve a competitive ratio of one.
\begin{lemma} \label{lem:lemm1} (Optimality Condition for \acrshort{otr-i})
 Suppose that there is a feasible algorithm $\mathbf{A}$, satisfying Constraints~\eqref{eq:prob-constraint}-\eqref{eq:capacity-constraints} for the \acrshort{otr-i} problem, such that for all input sequences~$\mathcal{I}$, $\sum_{a = 1}^{k} N_{a, \mathbf{A}}^{\mathcal{I}} \geq \sum_{a = 1}^{k} N_{a, \mathbf{A}'}^{\mathcal{I}}$ for $k \in [M]$ holds for all feasible algorithms $\mathbf{A'}$ satisfying Constraints~\eqref{eq:prob-constraint}-\eqref{eq:capacity-constraints}. Then $CR(\mathbf{A}) = 1$ for the \acrshort{otr-i} problem.
\end{lemma}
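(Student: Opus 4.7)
The plan is to show that the hypothesis of Lemma~\ref{lem:lemm1} implies that algorithm $\mathbf{A}$'s cost is no larger than that of any other feasible assignment, including the offline optimum, via a summation-by-parts (Abel) argument that exploits the ordering $t_1 \leq t_2 \leq \cdots \leq t_M$.

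First, since we are in the \acrshort{otr-i} setting, I normalize $\theta_i = 1$ for all $i$, so that the total travel cost of any feasible algorithm $\mathbf{A}'$ on input $\mathcal{I}$ is simply $\sum_{a=1}^{M} N_{a,\mathbf{A}'}^{\mathcal{I}}\, t_a$. I also observe that every feasible algorithm assigns each of the $n$ users to exactly one arc (by Constraint~\eqref{eq:prob-constraint}), so $\sum_{a=1}^{M} N_{a,\mathbf{A}'}^{\mathcal{I}} = n$ for any such $\mathbf{A}'$. Crucially, the offline optimum induces an assignment satisfying~\eqref{eq:prob-constraint}-\eqref{eq:capacity-constraints}, hence it is itself a feasible algorithm in the sense of the lemma.

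Next, letting $S_{k,\mathbf{A}'}^{\mathcal{I}} := \sum_{a=1}^{k} N_{a,\mathbf{A}'}^{\mathcal{I}}$ and writing $N_{a,\mathbf{A}'}^{\mathcal{I}} = S_{a,\mathbf{A}'}^{\mathcal{I}} - S_{a-1,\mathbf{A}'}^{\mathcal{I}}$ (with $S_{0,\mathbf{A}'}^{\mathcal{I}} := 0$), Abel summation yields
\begin{equation*}
\sum_{a=1}^{M} N_{a,\mathbf{A}'}^{\mathcal{I}}\, t_a \;=\; S_{M,\mathbf{A}'}^{\mathcal{I}}\, t_M \;-\; \sum_{a=1}^{M-1} S_{a,\mathbf{A}'}^{\mathcal{I}} \bigl(t_{a+1} - t_a\bigr) \;=\; n\, t_M \;-\; \sum_{a=1}^{M-1} S_{a,\mathbf{A}'}^{\mathcal{I}} \bigl(t_{a+1} - t_a\bigr).
\end{equation*}
Since $t_{a+1} - t_a \geq 0$ for every $a$, this representation is monotonically decreasing in each partial sum $S_{a,\mathbf{A}'}^{\mathcal{I}}$. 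Applying the hypothesis $S_{k,\mathbf{A}}^{\mathcal{I}} \geq S_{k,\mathbf{A}'}^{\mathcal{I}}$ coordinatewise then gives
\begin{equation*}
\sum_{a=1}^{M} N_{a,\mathbf{A}}^{\mathcal{I}}\, t_a \;\leq\; \sum_{a=1}^{M} N_{a,\mathbf{A}'}^{\mathcal{I}}\, t_a
\end{equation*}
for every feasible $\mathbf{A}'$ and every $\mathcal{I}$.

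Finally, specializing $\mathbf{A}'$ to any algorithm realizing the offline optimum, I obtain $ALG(\mathcal{I},\mathbf{A}) \leq OPT(\mathcal{I})$; the reverse inequality is automatic because $\mathbf{A}$ is itself feasible and $OPT$ is the minimum over feasible assignments. Hence $ALG(\mathcal{I},\mathbf{A}) = OPT(\mathcal{I})$ for all $\mathcal{I} \in \Omega$, i.e., $CR(\mathbf{A}) = 1$. The argument is essentially a rearrangement/majorization observation, and I do not expect any substantive obstacle; the only point requiring slight care is verifying that the offline optimum is covered by the quantifier ``for all feasible algorithms $\mathbf{A}'$,'' which follows immediately from the feasibility of its induced assignment.
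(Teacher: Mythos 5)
Your proof is correct and is essentially the paper's argument in a more compact form: the paper establishes the same conclusion by iteratively shifting the surplus $\sum_{a=1}^{k}\bigl(N_{a,\mathbf{A}}^{\mathcal{I}}-N_{a,\mathbf{A}'}^{\mathcal{I}}\bigr)\ge 0$ onto the next (more expensive) arc, which is precisely the unrolled form of your summation-by-parts identity. Your explicit remark that the offline optimum induces a feasible assignment, and hence is covered by the quantifier over $\mathbf{A}'$, is a point the paper leaves implicit.
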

The above lemma states that an algorithm is optimal for the \acrshort{otr-i} problem if it allocates more users on the lower cost arcs than any other \textit{feasible} algorithm. We prove Lemma~\ref{lem:lemm1} in \ifarxiv Appendix~\ref{proof-lem-1} \else Appendix A.1 \fi. 

Our second intermediary result establishes that Algorithm~\ref{alg:greedy} satisfies the sufficient optimality condition of Lemma~\ref{lem:lemm1} for a two-arc parallel network. To establish this claim, we will show that no \textit{feasible} algorithm can allocate more users to arc one than the greedy algorithm. 

\begin{lemma} \label{lem:greed-opt} (Sufficiency for Optimality of Greedy for Two-Arc \acrshort{otr-i})
In a two arc parallel road network with identical users, Algorithm~\ref{alg:greedy}, denoted as $\mathbf{A}$, satisfies the property that $ N_{1, \mathbf{A}}^{\mathcal{I}} \geq N_{1, \mathbf{A}'}^{\mathcal{I}}$ for all input sequences $\mathcal{I}$ and for any algorithm $\mathbf{A}'$ that satisfies the feasibility Constraints~\eqref{eq:prob-constraint}-\eqref{eq:capacity-constraints}.
\end{lemma}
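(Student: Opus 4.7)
My plan is to argue by contradiction: assume there exist a feasible $\mathbf{A}'$ and an input sequence $\mathcal{I}$ such that $N_{1,\mathbf{A}'}^{\mathcal{I}} > N_{1,\mathbf{A}}^{\mathcal{I}}$, and derive an impossibility by tracking how the cumulative arc-1 assignment counts of the two algorithms evolve over time. I would define the gap function $\Phi(t) := |\{k : \tau_k \leq t,\ \mathbf{A} \text{ assigns } k \text{ to arc } 1\}| - |\{k : \tau_k \leq t,\ \mathbf{A}' \text{ assigns } k \text{ to arc } 1\}|$, which is piecewise constant, changes by at most one in absolute value at each arrival, and satisfies $\Phi(\tau_n) < 0$ by hypothesis. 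Let $\tau_i$ be the first arrival time at which $\Phi$ becomes negative, so that $\Phi(t) \geq 0$ for all $t < \tau_i$, $\Phi(\tau_i^-) = 0$, and $\Phi(\tau_i) = -1$. This last equality forces greedy to send user $i$ to arc~2, forces $\mathbf{A}'$ to send $i$ to arc~1, and means the two algorithms placed the same total number of users on arc~1 before $\tau_i$.

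I would then analyze the arc-1 admissions in the window $(\tau_i - t_1, \tau_i)$, which are exactly the users currently occupying arc~1 at time $\tau_i^-$ because every user traverses arc~1 in the same fixed time $t_1$. Since greedy routes $i$ to arc~2 only when arc~1 is at capacity, greedy has exactly $c_1$ arc-1 admissions from this window; since $\mathbf{A}'$ feasibly sends $i$ to arc~1, $\mathbf{A}'$ has at most $c_1 - 1$ arc-1 admissions from the same window. Splitting each algorithm's arc-1 admissions before $\tau_i$ into a recent piece (arrivals in $(\tau_i - t_1, \tau_i)$) and an old piece (arrivals in $[\tau_1, \tau_i - t_1]$), the equality $\Phi(\tau_i^-) = 0$ implies that $\mathbf{A}'$ has strictly more old arc-1 admissions than greedy. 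Rewriting this inequality as $\Phi(\tau_i - t_1) \leq -1 < 0$, and using that $\tau_i - t_1 < \tau_i$, I contradict the choice of $\tau_i$ as the first arrival time at which $\Phi$ dips below zero, completing the argument.

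The main obstacle, and the crux of the proof, is this final translation of the snapshot capacity condition at $\tau_i$ into a statement about cumulative arc-1 admissions at the earlier time $\tau_i - t_1$. The step relies critically on the fact that arc~1 has a single fixed travel time, so that being currently active on arc~1 at $\tau_i$ and being admitted to arc~1 during $(\tau_i - t_1, \tau_i)$ are the same condition; this identification would break down with heterogeneous users or more than two arcs, which is in fact consistent with the paper's subsequent negative competitive-ratio results in those regimes. A secondary technical point to handle carefully is the bookkeeping at the boundary $\tau_k = \tau_i - t_1$, which I would resolve via the convention that a user departs an arc the instant its travel time elapses, matching the capacity constraint~\eqref{eq:capacity-constraints} used in the offline formulation.
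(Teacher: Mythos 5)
Your proof is correct, and it takes a genuinely different route from the paper's. The paper argues by induction on the number of users that greedy places on arc~2, maintaining the invariant that for each of greedy's arc-2 placements $q_1 < \dots < q_k$ there is a matching arc-2 placement $\tilde q_l \le q_l$ by any feasible $\mathbf{A}'$; the inductive step uses this matching to show $\mathbf{A}'$ must incur one more arc-2 placement whenever greedy does. Your first-crossing argument with the potential $\Phi$ replaces that matching bookkeeping entirely: greedy's exactly $c_1$ arc-1 admissions in $(\tau_i - t_1, \tau_i)$ versus $\mathbf{A}'$'s at most $c_1 - 1$, combined with $\Phi(\tau_i^-)=0$, immediately pushes the sign violation back to time $\tau_i - t_1$, contradicting the minimality of $\tau_i$. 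This is arguably cleaner, and it isolates exactly where the structural assumptions enter --- the identification of ``occupying arc~1 at $\tau_i$'' with ``admitted to arc~1 during the last $t_1$ time units'' is precisely the step that breaks for heterogeneous users, state-dependent costs, or $M \ge 3$ arcs, which is consistent with the paper's subsequent unboundedness results in those regimes. Two small points to pin down in a full write-up, neither of which affects correctness: (a) the degenerate case $t_1 = t_2$, where greedy needs a tie-break toward arc~1 for the stated inequality to hold (the paper sidesteps this by noting that any feasible algorithm is then optimal, so the lemma is only needed for $t_1 < t_2$); and (b) the case $\tau_i - t_1 < 0$, where both ``old'' counts vanish and the contradiction is immediate from the totals rather than from minimality.
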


\begin{hproof}
Denoting arcs one and two by $a_1$ and $a_2$, we proceed by induction on the number of users that Algorithm~\ref{alg:greedy} allocates on $a_2$. For the base case, it is easy to see from the feasibility constraints that if Algorithm~\ref{alg:greedy} allocates one user on $a_2$ then any other \textit{feasible} algorithm must also allocate at least one user on $a_2$ as well. We then consider the inductive step and show that if $k+1$ users are allocated by Algorithm~\ref{alg:greedy} on $a_2$ then at least $k+1$ users are allocated by any other \textit{feasible} algorithm on $a_2$. This follows since the greedy algorithm always allocates users on $a_1$ if it is below capacity. Thus, for every user the greedy algorithm allocates on $a_2$, any other \textit{feasible} algorithm must also allocate at least one user on $a_2$. 
\end{hproof}
\noindent For a detailed proof of Lemma~\ref{lem:greed-opt}, see \ifarxiv Appendix~\ref{proof-greed-opt} \else Appendix A.2 \fi. Finally, noting that algorithms $\mathbf{A}$ and $\mathbf{A}'$ must allocate the same number of users, it follows with equality that $\sum_{a = 1}^{2} N_{a, \mathbf{A}}^{\mathcal{I}} \geq \sum_{a = 1}^{2} N_{a, \mathbf{A}'}^{\mathcal{I}}$. Thus, Algorithm~\ref{alg:greedy} satisfies the optimality condition of Lemma~\ref{lem:lemm1}, thereby establishing Theorem~\ref{thm:greedy-main-result}.

\subsection{Worst-Case Performance Limitations of Deterministic Algorithms} \label{Non-Identical-Customers}
The optimality of the greedy algorithm for the two-arc \acrshort{otr-i} problem raises the question of whether a similar result holds for more general problem variants. In this section, we first consider \acrshort{otr} problem variants beyond the two-arc \acrshort{otr-i} problem by considering the settings of (i) $M \geq 3$ arcs, and (ii) non-identical users. We then consider the extensions of the \acrshort{otr} problem to the settings of (iii) general road networks and (iv) general cost functions for the arcs. We show that for these problem variants  the worst-case ratio between the online and optimal offline solutions is unbounded for any deterministic algorithm.

\subsubsection{Worst-Case Ratios for the \acrshort{otr} Problem.}
In this section, we show that for any variant of the two-arc \acrshort{otr-i} problem, an adversary can construct instances such that the ratio of the total travel cost of the online solution of any deterministic algorithm to that of the optimal offline solution is unbounded. 
\begin{theorem} \label{thm:only-otr-i-optimal} (Unbounded Worst-Case Ratios for \acrshort{otr})
For any \textit{feasible} deterministic algorithm, there are instances such that the ratio of the total travel cost of the online solution to that of the optimal offline solution is unbounded in each of the following cases:
(i) The number of arcs $M$ in the network is at least three, and
(ii) the users are non-identical.
Only in the case of the two-arc \acrshort{otr-i} problem, i.e., all users have equal values of time, there exists a \textit{feasible} deterministic algorithm that achieves a finite ratio between the total travel cost of an online solution and that of the optimal offline solution.
\end{theorem}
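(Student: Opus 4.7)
The plan is to show, for each case, that for every deterministic algorithm $\mathbf{A}$ one can construct an adversarial instance whose ratio $ALG/OPT$ grows without bound in a parameter $L$. Since $\mathbf{A}$ is deterministic, its response to the first user is a fixed function of that user's data, so the adversary can act adaptively: release the first user, observe the (determined) decision, and then commit the subsequent arrivals in the worst way possible. The argument reduces to a small case split on $\mathbf{A}$'s first routing decision.

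For case (ii), I would fix a two-arc network with $t_1 = 1$, $t_2 = L$ and $c_1 = c_2 = 1$, and release user 1 at $\tau_1 = 0$ with $\theta_1 = 1$. If $\mathbf{A}$ routes user 1 onto arc 2, the adversary stops; this yields $ALG = L$ and $OPT = 1$, for a ratio of $L$. Otherwise arc 1 is occupied until time $1$, and the adversary releases user 2 at $\tau_2 = \epsilon < 1$ with a large value-of-time $\theta_2 = L$. Since arc 1 is at capacity, $\mathbf{A}$ must place user 2 onto arc 2, incurring a cost of $L^2$ for user 2 alone, so $ALG \geq 1 + L^2$. The offline optimum swaps the assignments---placing the low-value user on the expensive arc and the high-value user on the cheap arc---to obtain $OPT \leq 2L$. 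In both branches the ratio is $\Omega(L)$, and so unbounded as $L \to \infty$. For case (i) with $M \geq 3$, I would run the same two-arc construction on arcs $1$ and $2$ after setting $t_3, \ldots, t_M$ arbitrarily large, so that no optimal solution ever uses arcs $3, \ldots, M$; the same two-user adversarial sequence then drives the same unbounded ratio, reducing case (i) to case (ii). The ``only'' clause, that two-arc \acrshort{otr-i} is the unique variant admitting a bounded competitive ratio, follows by combining Theorem~\ref{thm:greedy-main-result} with cases (i) and (ii).

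The main technical obstacle is that the construction must work against every deterministic algorithm rather than, say, a specific greedy rule. This is handled by the two-branch case split on $\mathbf{A}$'s first decision: any deterministic $\mathbf{A}$ has a fixed response to the initial user, which must fall into one of the two branches, and both branches have been engineered in advance so that $ALG/OPT = \Omega(L)$. A secondary subtlety is ensuring feasibility of the constructed instance (in particular, that arcs $3, \ldots, M$ really are never needed by $OPT$), which follows because $OPT$ always prefers the two-arc assignment of total cost $2L$ over any allocation that engages the arbitrarily expensive arcs.
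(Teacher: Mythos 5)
Your argument for case (ii) is essentially the paper's: an adaptive adversary on a two-arc network with $c_1=c_2=1$, a case split on the deterministic algorithm's first decision, stopping immediately if the first (low-value) user is sent to the expensive arc and otherwise releasing a high-value user while arc one is still occupied. The arithmetic checks out ($ALG\geq 1+L^2$ versus $OPT=2L$), and it matches the paper's Lemma~\ref{lem:non-identical-OTR} lower bound with the specialization $t_1=\theta'=1$, $t_2=\theta''=L$.

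However, your treatment of case (i) has a genuine gap. The theorem's final sentence asserts that the two-arc \acrshort{otr-i} problem is the \emph{only} variant admitting a finite ratio, so case (i) must be established for $M\geq 3$ arcs with \emph{identical} users; the paper makes this explicit by decomposing the theorem into ``$M\geq 3$ arc \acrshort{otr-i}'' and ``two-arc non-identical \acrshort{otr}.'' Your reduction of case (i) to case (ii) reuses the two-user sequence with $\theta_2=L$, i.e., non-identical users, so it only shows unboundedness for $M\geq 3$ arcs \emph{with heterogeneous values of time} --- a statement already subsumed by case (ii) --- and leaves open whether three-arc \acrshort{otr-i} admits a bounded-ratio algorithm. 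With identical users the value-of-time lever is unavailable, and the adversary can only punish the algorithm by forcing it onto an expensive arc that $OPT$ avoids; since any near-simultaneous burst of three users would force $OPT$ onto arc three as well, the separation must come from \emph{timing}. This is what the paper's Lemma~\ref{lem:M-greater-than-3} does: with $c_1=c_2=1$, $c_3$ large, and $t_1=5$, $t_2=10.01$, it runs a three-way case split on the first decision and, in the hardest branch (first user on arc one), exhibits a four-user sequence with arrival times chosen so that $OPT$ can reuse arcs one and two in non-overlapping windows while the online algorithm, having committed the first user to the wrong arc, is eventually forced onto arc three of cost $t_3$. Your proposal contains no analogue of this construction, so case (i) --- and hence the ``only'' clause --- is not established.
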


This theorem highlights that the assumptions of identical users and a two-arc parallel network are critical to the optimality of Algorithm~\ref{alg:greedy}. In fact, this result implies that the two-arc \acrshort{otr-i} problem is the only variant of \acrshort{otr} for which any deterministic algorithm can achieve a finite ratio between the total travel cost of an online solution and that of the optimal offline solution.

To prove Theorem~\ref{thm:only-otr-i-optimal}, it suffices to focus on two independent problems: (i) $M \geq 3$ arc \acrshort{otr-i}, and (ii) two-arc non-identical user \acrshort{otr}.


\paragraph{$M \geq 3$ Arcs \acrshort{otr-i}:} We first consider the extension of the two-arc \acrshort{otr-i} problem to an $M$ arc parallel network setting where $M \geq 3$. Lemma~\ref{lem:M-greater-than-3} establishes that the worst-case ratio of the total travel costs of the online solution of any deterministic algorithm to that of the optimal offline solution is unbounded in this setting.
\begin{lemma} \label{lem:M-greater-than-3} (Unboundedness for $M \geq 3$ Arcs)
For any $M \geq 3$, there exist arc costs $t_1, t_2, t_3$ and arc capacities $c_1$, $c_2$, and $c_3$ such that the ratio of the total travel cost of the online solution of any deterministic algorithm to that of the optimal offline solution is at least $1 + \frac{t_3-t_2}{2 t_1 + 2 t_2}$. The quantity $1 + \frac{t_3-t_2}{2 t_1 + 2 t_2}$ is unbounded since an adversary can pick $t_3$ to be arbitrarily large.
\end{lemma}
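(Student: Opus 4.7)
The plan is to fix arc parameters once and then, for every deterministic algorithm~$\mathbf{A}$, tailor the user arrival sequence based on $\mathbf{A}$'s first placement. I would set $M=3$ (for $M>3$, give the extra arcs cost $\ge t_3$ so they never improve on arc~3) and pick $t_1<t_2<t_3$ with $t_2<3t_1$ and capacities $c_1=c_2=c_3=1$. The driving observation is that, for identical users, placing user~1 on arc~1 versus arc~2 has the same immediate cost but ties up different arcs for different durations, which the adversary can weaponize by branching on $\mathbf{A}$'s commitment.

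First I would send user~1 at time~$0$ and branch on $\mathbf{A}$'s placement $a_1^*$. If $a_1^*=3$, the adversary stops: $ALG/OPT=t_3/t_1$, and a routine check from $t_1\le t_2\le t_3$ gives $t_3/t_1\ge 1+(t_3-t_2)/(2t_1+2t_2)$. If $a_1^*=1$, I would send three more users at times $\epsilon$, $t_1+2\epsilon$, $t_1+3\epsilon$ with $\epsilon\in[(t_2-t_1)/3,\,(t_2-t_1)/2)$, a window that is nonempty by $t_2<3t_1$. This $\epsilon$ is calibrated so that OPT achieves $2t_1+2t_2$ via the swap assignment (user~2 on arc~1, user~1 on arc~2, user~3 on arc~1, user~4 on arc~2), while a feasibility argument on arc occupations forces any feasible $\mathbf{A}$ to put exactly one of users~2,~3,~4 on arc~3, giving $ALG=2t_1+t_2+t_3$ and matching the bound exactly. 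If $a_1^*=2$, I would send two more users at times $t_1+\epsilon$ and $t_1+2\epsilon$ with $\epsilon<\min\{t_1,(t_2-t_1)/2\}$; then arc~2 is tied up until $t_2$, so $\mathbf{A}$ must place user~3 on arc~3, yielding $ALG=t_1+t_2+t_3$ against $OPT=2t_1+t_2$ (OPT reuses arc~1 sequentially for users~1 and~2, then uses arc~2 for user~3). Cross-multiplying, the required ratio reduces to $t_2(t_2+t_3)\ge 2t_1^2$, which is immediate from $t_2\ge t_1$ and $t_2+t_3\ge 2t_1$.

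The step I expect to require the most care is the feasibility argument in the $a_1^*=1$ branch: showing that, regardless of how $\mathbf{A}$ assigns the remaining three users, exactly one must land on arc~3 while the others occupy arc~1 twice and arc~2 once (in some order), pinning the total cost to $2t_1+t_2+t_3$. This amounts to checking that, under the chosen~$\epsilon$, no two of users~2,~3,~4 can share arc~1 or arc~2 without overlapping in time; the argument is elementary but must be carried out for the two sub-cases $a_2=2$ and $a_2=3$. The conceptually essential point is the adversary's adaptivity: any single fixed sequence fails, because an algorithm that preemptively swaps (placing user~1 on arc~2) would coincidentally reproduce OPT on the natural four-user sequence, so a separate sequence tailored to this branch is needed to extract the bound.
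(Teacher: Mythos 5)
Your proposal is correct and follows essentially the same strategy as the paper's proof: branch on where the deterministic algorithm places the first user and, in each branch, exhibit a short tailored arrival sequence (one, three, or four users) that forces the algorithm onto arc~3 while the offline optimum avoids it, then verify that each branch's ratio dominates $1 + \frac{t_3-t_2}{2t_1+2t_2}$. The only substantive difference is that you work with symbolic costs satisfying $t_2 < 3t_1$ where the paper fixes $t_1 = 5$, $t_2 = 10.01$; one small caution is that, under the closed-interval occupancy convention of~\eqref{eq:capacity-constraints}, you should take $\epsilon$ strictly greater than $(t_2-t_1)/3$ so that OPT's user~4 does not collide with user~1 on arc~2 exactly at the endpoint.
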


\begin{hproof}
We fix a three arc parallel network, where the arc capacities are $c_1 = 1$, $c_2 = 1$, $c_3 = 10$ and the arc costs are $t_1 = 5$, $t_2 = 10.01$, and $t_3$ can be chosen by an adversary. In any such three-arc network, a \textit{feasible} deterministic algorithm $\mathbf{A}$ must allocate the first arriving user on one of the three arcs. We analyze each case and show that an adversary can construct input sequences such that any \textit{feasible} deterministic algorithm will allocate users on arc three, while the optimal allocation assigns users to the first two arcs. The analysis yields a lower bound on the ratio of the total travel cost of the online solution of any deterministic algorithm to that of the optimal offline solution of $1 + \frac{t_3-t_2}{2 t_1 + 2 t_2}$. Unboundedness follows since $t_3$ can be chosen to be arbitrarily large for fixed $t_1$ and $t_2$.
\end{hproof}
We refer to \ifarxiv Appendix~\ref{proof-M-greater-than-3} \else Appendix A.3 \fi for a detailed proof of Lemma~\ref{lem:M-greater-than-3}. 

\paragraph{Two-arc Non-Identical User \acrshort{otr}:} We now consider the case of non-identical users, i.e., the \acrshort{otr} problem described in Section~\ref{modelling-framework}, and show that the ratio of the total travel cost of the online solution of any \textit{feasible} deterministic algorithm to that of the optimal offline solution is not finite. We further show for the two-arc \acrshort{otr} problem that Algorithm~\ref{alg:greedy} achieves the best possible ratio for the total travel cost of the online solution of any deterministic algorithm to that of the optimal offline solution. 
\begin{lemma} \label{lem:non-identical-OTR} (Unboundedness for Non-Identical Users)
Consider a parallel network where $M=2$ and let $\theta'$ and $\theta''$ be the lowest and highest user values-of-time respectively. Then for the \acrshort{otr} problem there exist arc capacities $c_1$, $c_2$ such that the ratio of the total travel cost of an online solution of any \textit{feasible} deterministic algorithm to that of the optimal offline solution is at least $\frac{\theta'' t_2 + \theta' t_1}{\theta'' t_1 + \theta' t_2}$, where $t_1$ and $t_2$ are the costs of the two arcs such that $t_1 \leq  t_2$. Algorithm~\ref{alg:greedy} achieves this ratio for any two-arc parallel network. 
\end{lemma}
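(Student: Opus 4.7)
My plan is to prove the lower bound by case-analyzing the behavior of any deterministic online algorithm on the very first arrival, and then to bound the greedy algorithm's worst-case ratio separately. For the lower bound I fix capacities $c_1 = c_2 = 1$, which makes the two-arc construction maximally constrained.

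Fix any deterministic feasible algorithm $\mathbf{A}$. Its response to the lone arrival $(\theta_1, \tau_1) = (\theta', 0)$ is predetermined: it must route this user either to arc~1 or to arc~2. In the first case, the adversary extends the sequence to $\mathcal{I} = \{(\theta', 0), (\theta'', \epsilon)\}$ for a small $\epsilon > 0$; since arc~1 is now at capacity, the high-value user is forced onto arc~2, producing online cost $\theta' t_1 + \theta'' t_2$. The offline optimum swaps the assignment for total cost $\theta'' t_1 + \theta' t_2$, so the ratio is exactly $\frac{\theta'' t_2 + \theta' t_1}{\theta'' t_1 + \theta' t_2}$. In the second case, the adversary stops after the single user: the ratio is $t_2/t_1$, and a cross-multiplication reducing to $\theta'(t_2^2 - t_1^2) \geq 0$ shows $t_2/t_1 \geq \frac{\theta'' t_2 + \theta' t_1}{\theta'' t_1 + \theta' t_2}$. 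Either branch yields the desired lower bound on $\mathrm{CR}(\mathbf{A})$.

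For the second claim, I compare greedy's arc assignment to the optimum's on an arbitrary instance by partitioning the users into $X := G_1 \setminus O_1$, $Y := O_1 \setminus G_1$, $Z := G_1 \cap O_1$, and $W := G_2 \cap O_2$, where $G_a$ and $O_a$ denote the users placed on arc $a$ by greedy and by the optimum, respectively. A direct expansion yields $C_G - C_O = (t_2 - t_1)(\sum_{i \in Y}\theta_i - \sum_{i \in X}\theta_i)$. Since the proof of Lemma~\ref{lem:greed-opt} relies only on the feasibility constraints and not on the identical-value assumption, the same argument shows $|G_1| \geq |O_1|$, so $|X| \geq |Y|$. To maximize $C_G/C_O$ under these constraints, the adversary loads $Y$ with $\theta''$-users and $X$ with $\theta'$-users, sets $|X| = |Y|$, and leaves $Z = W = \emptyset$; the ratio then collapses to $\frac{\theta'' t_2 + \theta' t_1}{\theta'' t_1 + \theta' t_2}$, with tightness witnessed by the two-user instance from the first case above specialized to greedy.

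The main obstacle I expect is the upper-bound step for greedy: one must confirm that $|X| \geq |Y|$ carries over without the identical-value assumption, and argue that every deviation from the extremal configuration --- $Z$ or $W$ nonempty, $|X| > |Y|$, or non-extreme values in $X \cup Y$ --- can only shrink $C_G/C_O$. The lower-bound case analysis, being a direct two-case adversary construction, is the cleaner half.
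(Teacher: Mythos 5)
Your proposal is correct and follows essentially the same route as the paper: the lower bound is the identical two-case adversary on the first arrival with $c_1=c_2=1$, and the upper bound for greedy rests on the same key fact (Lemma~\ref{lem:greed-opt}, whose proof indeed uses only feasibility and so gives $|G_1|\geq|O_1|$ here) followed by an extremization of the ratio. Your symmetric-difference identity $C_G-C_O=(t_2-t_1)\bigl(\sum_{i\in Y}\theta_i-\sum_{i\in X}\theta_i\bigr)$ is a cleaner path through that extremization than the paper's chain of substitutions and derivative checks, and the step you flag as the main obstacle does close: adding $t_1\sum_Z\theta_i+t_2\sum_W\theta_i\geq 0$ to numerator and denominator of a ratio at least one can only shrink it, and $(t_1x+t_2y)/(t_2x+t_1y)$ is decreasing in $x$ and increasing in $y$, so its maximum over $x\geq|X|\theta'\geq|Y|\theta'$ and $y\leq|Y|\theta''$ is attained at exactly the corner configuration you identify, yielding the claimed bound.
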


\begin{hproof}
To prove this theorem, we first define the set of network parameters $\mathcal{S}$ and provide two input sequences for which the ratio of the solution of any deterministic algorithm to the optimal offline solution is at least $\frac{\theta'' t_2 + \theta' t_1}{\theta'' t_1 + \theta' t_2}$ on one of these input sequences. Next, to establish that the greedy algorithm achieves this ratio for a two-arc parallel network, we use the property of the greedy algorithm established in Lemma~\ref{lem:greed-opt} and provide a sequence of inequalities, which establish that $\frac{\theta'' t_2 + \theta' t_1}{\theta'' t_1 + \theta' t_2}$ is an upper bound on the ratio of the total travel cost of the online solution of Algorithm~\ref{alg:greedy} to that of the optimal offline solution. Since Algorithm~\ref{alg:greedy} is deterministic, the greedy algorithm achieves the ratio of $\frac{\theta'' t_2 + \theta' t_1}{\theta'' t_1 + \theta' t_2}$ for the \acrshort{otr} problem for a two-arc parallel network.
\end{hproof}

We refer to \ifarxiv Appendix~\ref{proof-non-identical-OTR} \else Appendix A.4 \fi for a detailed proof of Lemma~\ref{lem:non-identical-OTR}. Note that if we let $\theta'' >> \theta'$, then $\frac{\theta'' t_2 + \theta' t_1}{\theta'' t_1 + \theta' t_2} \approx \frac{t_2}{t_1}$. That is, the ratio of the online and offline solutions is at least the ratio of the travel times of the links. Since an adversary can choose $t_2$ to be arbitrarily large the ratio $\frac{t_2}{t_1}$ is unbounded. Furthermore, note that if the lowest and highest values of time are equal, i.e., $\theta' = \theta''$, then the competitive ratio of the greedy algorithm obtained in Lemma~\ref{lem:non-identical-OTR} equals one, which parallels the result of Theorem~\ref{thm:greedy-main-result}. As the difference in the values of time increases the worst case ratio between the online greedy solution and that of the optimal offline solution also increases. Thus, Lemma~\ref{lem:non-identical-OTR} provides a parametric representation of the competitive ratio of the greedy algorithm as we move smoothly between \acrshort{otr-i} to \acrshort{otr} for a two-arc parallel network.

Jointly Lemmas~\ref{lem:M-greater-than-3} and~\ref{lem:non-identical-OTR} establish Theorem~\ref{thm:only-otr-i-optimal}.

\subsubsection{Extensions of the \acrshort{otr} Problem.} In this section, we consider extended variants of the \acrshort{otr} problem to the settings of (i) general non-parallel road networks, and (ii) general cost functions. In both settings, we show that even when users are identical, the ratio between the total travel cost of the online solution of any deterministic algorithm to that of the optimal offline solution is unbounded.

\paragraph{General (Non-Parallel) Road Networks:} We first consider the extension of the \acrshort{otr} problem to general road networks with identical users and refer to \ifarxiv Appendix~\ref{framework-general-road} \else Appendix B.1 \fi for an introduction to this problem setting. Lemma~\ref{lem:general-road-networks} shows that in this setting, the ratio between the total travel cost of the online solution of any deterministic algorithm and that of the the optimal offline solution is again unbounded. 
\begin{lemma} \label{lem:general-road-networks} (Unboundedness for General Road Networks)
Consider a series parallel road network with two sets of parallel networks, each consisting of two arcs. Then for some arc capacities and costs, the ratio between the total travel cost of the online solution of any \textit{feasible} deterministic algorithm to that of the optimal offline solution is unbounded for the generalization of the \acrshort{otr-i} problem to general road networks.
\end{lemma}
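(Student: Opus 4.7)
The plan is to exhibit an explicit 2-by-2 series-parallel instance together with an adaptive adversarial arrival sequence that produce an unbounded ratio of online to offline cost for every deterministic algorithm. The overall structure mirrors the construction of Lemma~\ref{lem:M-greater-than-3} for $M\geq 3$ parallel arcs: in a network with two parallel pairs $(a_1,a_2)$ and $(b_1,b_2)$, each user must irrevocably commit to one of four origin-to-destination paths at arrival time, and the adversary exploits this commitment.

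I would equip each parallel pair with one fast, low-capacity arc and one slow, higher-capacity arc, so that the four paths $(a_i,b_j)$ span three effective cost levels---cheap, intermediate, and expensive---with the expensive path's cost $t_{a_2}+t_{b_2}$ left as a free parameter $T$ that the adversary will drive to infinity. The adversary sends the first user at time $0$ and observes the algorithm's choice. If the algorithm picks any path other than $(a_1,b_1)$, the single-user cost already exceeds the cheapest path's cost by a factor growing unboundedly with $T$, so the adversary halts. Otherwise the algorithm has committed both single-capacity fast arcs to user~1 for the duration of its traversal, and the adversary sends a stream of subsequent users at closely-spaced arrival times within that window, chosen so that each new user finds $a_1$ still blocked and also finds the cross-path $(a_2,b_1)$ infeasible (its $b_1$-interval overlaps with an earlier user's occupancy of $b_1$). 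These users are then forced onto $(a_2,b_2)$ at per-user cost of order $T$, so the online total grows linearly in the number of adversarial users times $T$.

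In contrast, the offline optimum can anticipate the entire arrival sequence and break the online's commitment, for instance by routing user~1 along the intermediate path $(a_1,b_2)$ and an early subsequent user along the complementary path $(a_2,b_1)$, thereby preserving the usability of each fast arc across distinct users. With appropriate temporal interleaving, offline admits a cluster of users onto the cheap and intermediate paths rather than the expensive one, keeping its total cost bounded in the cheap-arc parameters and yielding a ratio of online to offline cost that grows without bound as $T \to \infty$.

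The principal obstacle lies in calibrating the arc travel times, capacities, and arrival times so that two competing demands hold simultaneously: the online algorithm's commitment to $(a_1,b_1)$ must genuinely propagate a cascade of capacity conflicts onto the expensive path for every subsequent user, while the offline's anticipatory re-pairing must strictly save cost. Because the four path costs satisfy $t_{a_1}+t_{b_1}+t_{a_2}+t_{b_2}=t_{a_1}+t_{b_2}+t_{a_2}+t_{b_1}$, re-pairing the same two users between the two ``diagonal'' path pairs is cost-neutral, so the gap must be opened by exploiting temporal staggering across a longer stream of users rather than through a single clever swap of two users.
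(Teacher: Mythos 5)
There is a genuine gap, and it is structural rather than a matter of missing calibration. Your parameterization is internally inconsistent: since the four path costs satisfy $(t_{a_1}+t_{b_2})+(t_{a_2}+t_{b_1})=(t_{a_1}+t_{b_1})+(t_{a_2}+t_{b_2})$, the ``expensive'' diagonal path $(a_2,b_2)$ cannot be driven to infinity while both ``intermediate'' off-diagonal paths stay bounded --- at least one of $t_{a_2},t_{b_2}$ must blow up, dragging an off-diagonal path with it. This breaks the argument in two places. First, your opening case split fails: if the first user is routed on an intermediate path of bounded cost, the single-user ratio is bounded and the adversary gains nothing by halting. Second, and more fatally, the offline strategy you sketch routes users onto $(a_1,b_2)$ and $(a_2,b_1)$, i.e.\ onto the slow arcs themselves, so the offline cost for those users is $t_{a_1}+t_{b_1}+T$ --- exactly the online cost, giving ratio $1$, not $\infty$. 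You acknowledge this cost-neutrality and defer to ``temporal staggering across a longer stream of users,'' but that deferred step is precisely the content of the lemma; without a mechanism by which offline avoids every unbounded-cost arc entirely while online is forced onto one, the proof does not close.

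The paper's construction shows what the fix looks like: the unbounded cost is concentrated on a \emph{single} arc $L_4$ in the second stage, which the offline optimum never uses. Its second-stage sibling $L_3$ has bounded cost but capacity one, and the two first-stage arcs $L_1,L_2$ have \emph{different} bounded costs (roughly $1$ versus $3+\epsilon_1$). The asymmetry in first-stage travel times is the engine: a user's arrival time at the intermediate node depends on which first-stage arc it took, so the offline solution can interleave users between $L_1$ and $L_2$ so that their occupancy intervals on $L_3$ are disjoint, whereas the online algorithm, having irrevocably committed the first user to one first-stage arc, is presented with follow-up arrivals timed to create an unavoidable collision on $L_3$, pushing some user onto $L_4$. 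Three cases on the first user's route (through $L_4$; via $L_1,L_3$; via $L_2,L_3$) each yield such a forcing sequence of at most four users. Note that in your symmetric ``fast/slow in each stage'' network this device is unavailable: if the fast arcs $a_1,b_1$ have equal roles and the slow arcs carry the large cost, offline has no bounded-cost detour around a blocked fast arc. The asymmetric travel times and the bounded-cost, capacity-one shared arc are not calibration details --- they are the idea.
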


\begin{hproof}
To prove the lemma, we consider a series parallel network with one origin and one destination, separated by one intermediate node. We then consider the possible cases for how a deterministic algorithm could allocate the first arriving user in the series parallel network. In each case, we demonstrate an input sequence for which a deterministic algorithm will allocate at least one user on an arc that the optimal offline allocation will never allocate users to. Since an adversary can choose the cost of this arc to be arbitrarily large, the ratio between the total travel cost of the online solution of any deterministic algorithm and that of the optimal offline solution cannot be bounded.
\end{hproof}
We refer to \ifarxiv Appendix~\ref{proof-general-road-networks} \else Appendix A.5 \fi for a detailed proof of Lemma~\ref{lem:general-road-networks}. 

\paragraph{General Cost Functions:} Thus far, we have considered the setting where the cost of the arcs are constant. We now consider a setting where the travel time (rather than being fixed) on the arcs is modelled by a cost function $f_a: \mathbb{R}_{\geq 0} \mapsto \mathbb{R}_{\geq 0}$. Here we denote the travel time of arc $a$ as $f_a(y)$ when $y$ users are on arc $a$. We introduce the relevant notation and present the \acrshort{otr} problem for general cost functions in \ifarxiv Appendix~\ref{framework-general-congestion} \else Appendix B.2 \fi. In this setting, we show that the ratio between the total travel cost of any deterministic algorithm's online solution and that of the optimal offline solution is unbounded.

\begin{lemma} \label{lem:general-congestion-functions} (Unboundedness for General Cost Functions)
Let $M=2$, then for some arc capacities $c_1$ and $c_2$, there exist arc cost functions $f_1:\mathbb{R}_{\geq 0} \mapsto \mathbb{R}_{\geq 0}$ and $f_2:\mathbb{R}_{\geq 0} \mapsto \mathbb{R}_{\geq 0}$ such that the ratio of the total travel cost of the online solution of any \textit{feasible} deterministic algorithm to that of the optimal offline solution is unbounded for the \acrshort{otr-i} problem for general cost functions.
\end{lemma}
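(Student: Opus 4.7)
The plan is to construct step-like cost functions with a \emph{crossover} structure that traps any deterministic online algorithm regardless of its first decision, conceptually mirroring the non-identical-user adversary of Lemma~\ref{lem:non-identical-OTR} but driven purely by nonlinear congestion effects rather than by heterogeneous values of time. Taking $c_1 = c_2 = 2$ and a parameter $B > 1$ that will be made arbitrarily large, I set $f_1(1) = B$, $f_1(2) = 1$, $f_2(1) = 1$, and $f_2(2) = B$. In this way arc $1$ is expensive when used alone but cheap when full, whereas arc $2$ is the opposite; intuitively, each arc looks individually appealing for a first user but becomes prohibitively expensive once a second user joins on the same arc.

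Given any deterministic feasible algorithm $\mathbf{A}$, I would case-split on $\mathbf{A}$'s decision for the first arriving user (sent at time $\tau_1 = 0$), with the adversary adapting the rest of the input sequence to $\mathbf{A}$. First, if $\mathbf{A}$ routes the first user to arc $1$, the adversary sends no further users: the online cost is $f_1(1) = B$, while the offline optimum assigns the user to arc $2$ for cost $f_2(1) = 1$, giving ratio $B$. Second, if $\mathbf{A}$ routes the first user to arc $2$, the adversary dispatches a second user at time $\tau_2$ small enough that both users overlap on the network (feasible since $f_2(1) = 1 > 0$). The offline optimum now pools both users concurrently on arc $1$ for total cost $2 f_1(2) = 2$, while any online choice for the second user costs at least $B + 1$: specifically $f_2(1) + f_1(1) = B+1$ if assigned to arc $1$, or $2 f_2(2) = 2B$ if assigned to arc $2$. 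In this case the ratio is thus at least $(B+1)/2$.

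Combining both cases, the competitive ratio against any deterministic algorithm is at least $(B+1)/2$, which can be made arbitrarily large by choosing $B$ large in the construction of the cost functions, establishing the unboundedness claim. The main technical obstacle I anticipate is the cost-accounting convention for overlapping users under the general-cost-function model introduced for this lemma: one must verify that two users concurrently on arc $a$ each incur cost $f_a(2)$ rather than retaining some cost recorded upon entry. Under the standard congestion-function interpretation used throughout the selfish routing literature, this follows directly from the definitions, and the remaining argument reduces to the finite case analysis above.
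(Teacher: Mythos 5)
Your construction has two genuine problems, both stemming from the specific cost model the paper uses for this extension (Problem~3 in Appendix~B.2), and the second is exactly the ``cost-accounting convention'' you flagged as a risk but then resolved the wrong way. First, the model requires each $f_a$ to be \emph{non-decreasing} in its argument, and your $f_1$ with $f_1(1)=B$ and $f_1(2)=1$ is strictly decreasing; a road that gets faster as more users pile onto it is not an admissible cost function here, so the crossover gadget is ruled out at the level of the instance definition. Second, in this model a user entering arc $a$ at time $\tau_i$ traverses it for $f_a(y_a+1)$ time units where $y_a$ is the occupancy \emph{at the moment of entry}, and that cost is locked in --- it is not retroactively updated when a later user joins. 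Since the model also stipulates that no two users arrive at the same instant, the first user placed on arc~$1$ always enters with $y_1=0$ and pays $f_1(1)=B$ no matter what happens afterwards. Your offline benchmark of $2f_1(2)=2$ for ``pooling both users on arc~1'' is therefore unattainable: the offline cost of that assignment is $B+1$, which destroys the ratio in your second case. The ``standard selfish-routing interpretation'' you invoke (all concurrent users pay the current load's cost) is simply not the semantics of this problem.

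The paper's proof goes in the opposite direction: it keeps $f_1$ non-decreasing ($f_1(1)=5$, $f_1(2)=\bar d$ with $\bar d$ huge), makes $f_2$ a constant, and uses a tight capacity $c_2=1$ on the constant-cost arc to force any deterministic algorithm into a state where a user must enter arc~$1$ \emph{while it is already occupied}, thereby paying $f_1(2)=\bar d$; the offline optimum staggers arrivals so that no two users are ever concurrent on arc~$1$ and everyone pays only $f_1(1)$ or $f_2$. That is, the adversary punishes forced concurrency under an increasing cost function rather than rewarding voluntary concurrency under a decreasing one. If you want to salvage your argument you would need to rebuild it around a non-decreasing $f_1$ and an entry-time cost rule, at which point you essentially recover the paper's two-case analysis.
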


\begin{hproof}
We consider a two arc network with arc capacities $c_1 = 2, c_2 = 1$. We further define the cost function of arc two ($a_2$) to be constant and let the cost of arc one ($a_1$) be defined by a linear function $f_1$ that satisfies $f_1(1) = 5$ and $f_1(2) = \Bar{d}$, where $\Bar{d}$ is chosen by an adversary. We then construct input sequences such that any \textit{feasible} deterministic algorithm will allocate two users simultaneously on arc $a_1$ to incur the cost $\Bar{d}$, while the optimal allocation is to assign users such that no two users concurrently use $a_1$. Since $\Bar{d}$ can be made arbitrarily large, the ratio of the total travel cost of the online solution of any deterministic algorithm to that of the optimal offline solution is unbounded.
\end{hproof}
We refer to \ifarxiv Appendix~\ref{proof-general-congestion-functions} \else Appendix A.6 \fi for a detailed proof of Lemma~\ref{lem:general-congestion-functions}.

In this section, we analyzed four problem variants that removed the simplifying assumptions of the two-arc \acrshort{otr-i} problem considered in Section~\ref{identical-customers}. In each of these variants, we established that the ratio between the total travel cost of the online solution of any deterministic algorithm to that of the optimal offline solution is unbounded. 

\section{Beyond Worst-Case Analysis With Prior Information} \label{beyond-worst-case}
The worst-case analysis for the \acrshort{otr} problem motivate us to consider the setting wherein prior information of the user arrivals is known. Assumptions on user arrivals are reasonable in practical traffic routing applications due to the ever-increasing availability of traffic data. For instance, origin-destination travel information enables the capture of regularities in user arrivals, e.g., the similarity in peoples' weekday travel patterns throughout the year. Given the availability of prior information, we consider stochastic assumptions on user arrivals, where the input sequences $\mathcal{I}$ are drawn from a (potentially unknown) probability distribution. For this setting, we devise data-driven algorithms that learn an optimal parametrization for traffic routing through past observed input sequences and utilize this learned information to make prescriptive real-time decisions on unseen test sequences. We further obtain probabilistic generalization guarantees of our data-driven algorithms to unseen input sequences drawn from the same probability distribution.

We begin this section by presenting an overview of the scenario optimization paradigm in Section~\ref{classical-scenario-framework}, which we leverage to devise a time-independent data-driven algorithm for the \acrshort{otr} problem in Sections~\ref{data-driven-oad} and~\ref{offline-parameter-learning}. We finally present generalization guarantees for our time-independent algorithm and develop an extension of this algorithm to the time-dependent setting in Section~\ref{data-driven-online-alg}.

\subsection{Scenario Approach} \label{classical-scenario-framework}

The scenario optimization framework provides a methodology to perform optimization in uncertain environments where the instances are represented by an uncertainty vector $\delta$ that lies in the probability space $(\Delta, \mathcal{F}, \mathbb{P})$. Here $\Delta$ is the sample or uncertainty set of possible outcomes, $\mathcal{F}$ is a $\sigma$-algebra, and $\mathbb{P}$ is a probability measure on the sets of possible outcomes. The scenario approach is based on the following prototypical convex optimization problem:
\begin{align} \label{eq:classic-scenario}
    \min_{\pi \in \Pi} c^{T} \pi \quad \text { subject to }: f\left(\pi, \delta_k\right) \leq 0, \quad \forall k \in [K]
\end{align}
Here $\Pi \subseteq \mathbb{R}^{\Bar{n}}$ is a closed and convex set, $\pi \in \Pi$ is a vector of decision parameters, and $c$ is a constant vector. Further, $f\left(\pi, \delta\right)$ is a continuous and convex function in $\pi$, while $\delta_i$ for $k \in [K]$ is an independent and identically distributed (i.i.d.) random sample drawn from the uncertainty set $\Delta$. An example of set $\Delta$ and random vector $\delta$ in the context of the \acrshort{otr} problem are given in Section~\ref{uncertainty-set}.

In recent years, research on scenario optimization has focused on generalization guarantees for Problem~\eqref{eq:classic-scenario} to extend its solution properties to unseen samples in the probability space. To elucidate the generalization guarantee we leverage in this work, we first introduce some key definitions. First, we define the risk of a vector of decision variables $\pi$ as the probability that constraints of the form $f(\pi, \delta) \leq 0$ are violated.
\begin{definition} (Risk) \label{def:prob-violation-classical}
The \textit{risk} of a given decision parameter $\pi \in \Pi$, $V(\pi)$ is the probability with which constraints $f(\pi, \delta) \leq 0$ are violated, i.e., $V(\pi) = \mathbb{P}\{\delta \in \Delta: f(\pi, \delta) > 0 \}$.
\end{definition}

In addition to the objective $c^T \pi^*$ for the optimal decision variables $\pi^*$ corresponding to the solution of Problem~\eqref{eq:classic-scenario}, the risk $V(\pi^*)$ is a fundamental quantity to evaluate the quality of a vector of decision parameters $\pi^*$. While the problem of providing bounds on the number of samples $K$ that are necessary to provide a solution with a small risk have been studied extensively~\citep{scenario-main,campi2008exact}, these bounds are often not tight for problems that are not fully supported, i.e., problems where the removal of some constraints will not change the solution $\pi^*$ to Problem~\eqref{eq:classic-scenario}. As a result, to obtain bounds on the risk of a vector of optimal decision parameters, we adopt the approach used in~\cite{risk-complexity-scenario}, which relates the level of risk of the optimal solution $\pi^*$ of Problem~\eqref{eq:classic-scenario} to its number of support constraints, formally defined as follows.


\begin{definition} (Support Constraint) \label{def:support-constraint}
A constraint $f(\pi, \delta) \leq 0$ of Problem~\eqref{eq:classic-scenario} is a support constraint if its removal changes the optimal solution $\pi^*$ of Problem~\eqref{eq:classic-scenario}.
\end{definition}




Using the above notion of a support constraint,~\cite{risk-complexity-scenario} establish the following key result.

\begin{proposition} (Relation between Number of Support Constraints and Risk, see~\cite{risk-complexity-scenario}) \label{prop:scenario-classic-theorem}
Fix a parameter $\beta \in (0, 1)$ and consider the following polynomial equation in $t$ for any $k \in \{0, 1, ..., \Bar{n} \}$

\begin{align} \label{eq:poly-eq}
     {K \choose k} t^{N-k}-\frac{\beta}{2 K} \sum_{i=k}^{K-1} {i \choose k} t^{i-k}-\frac{\beta}{6 K} \sum_{i=N+1}^{4 K} {i \choose k} t^{i-k}=0
\end{align}
This equation has exactly two solutions $\underline{t}(k), \Bar{t}(k) \in [0, \infty)$ where $\underline{t}(k) \leq \Bar{t}(k)$. Let $\pi^*$ be the unique solution to Problem~\eqref{eq:classic-scenario}. Then, when the number of samples is greater than the number of decision variables, i.e., $K>\Bar{n}$, 
\begin{align*}
    \mathbb{P}\left\{\underline{\epsilon}\left(s^{*}\right) \leq V\left(\pi^{*}\right) \leq \bar{\epsilon}\left(s^{*}\right)\right\} \geq 1-\beta,
\end{align*}
where $s^*$ are the number of support constraints, $\bar{\epsilon}(s^*) = 1-\underline{t}(s^*)$ and $\underline{\epsilon}\left(s^{*}\right) = \max \{ 0, 1-\Bar{t}(s^*)\}$.
\end{proposition}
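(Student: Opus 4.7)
The plan is to prove Proposition~1 by following the wait-and-judge paradigm: I would relate the distribution of the risk $V(\pi^*)$ to the random cardinality $s^*$ of the support set through a combinatorial subsampling argument over the $K$ scenarios, and then derive matching tail bounds whose equality thresholds are exactly the two roots $\underline{t}(k), \bar{t}(k)$ of the polynomial equation in the statement.

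First I would establish the classical Helly-type lemma for convex scenario programs: because Problem~\eqref{eq:classic-scenario} has decision dimension $\bar{n}$, the cardinality $s^*$ of the support set satisfies $s^* \leq \bar{n}$ almost surely. This justifies restricting the polynomial equation to $k \in \{0, 1, \ldots, \bar{n}\}$ and reduces the statement to bounding, for each such $k$, the joint tail probabilities $\mathbb{P}(s^* = k,\, V(\pi^*) > \bar{\epsilon}(k))$ and $\mathbb{P}(s^* = k,\, V(\pi^*) < \underline{\epsilon}(k))$ by a suitable share of $\beta$.

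The central combinatorial step exploits exchangeability of $\delta_1, \ldots, \delta_K$. For each size-$k$ index set $I \subseteq [K]$, consider the event $\mathcal{E}_I$ that $I$ is exactly the support of $\pi^*$. On $\mathcal{E}_I$, the solution $\pi^*$ depends only on $\{\delta_i : i \in I\}$, and the remaining $K - k$ scenarios independently satisfy $f(\pi^*, \delta_j) \leq 0$, an event of conditional probability $(1 - V(\pi^*))^{K-k}$. Conditioning on $\pi^*$ and then integrating yields an identity of the form
\begin{align*}
\mathbb{P}(s^* = k,\, V(\pi^*) \leq v) = \binom{K}{k} \int_0^v (1 - u)^{K-k}\, \dd{F_k(u)},
\end{align*}
where $F_k$ is the induced measure of $V(\pi^*)$ on $[0,1]$ restricted to the event that a prescribed set of $k$ samples is the support set. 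Expanding $(1-u)^{K-k}$ binomially and manipulating with $v = 1 - t$ produces tail expressions whose coefficients match precisely the binomial factors $\binom{i}{k}$ appearing in equation~\eqref{eq:poly-eq}.

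The polynomial equation then arises by equating each tail mass as a function of the threshold $t$ to the respective budgets $\beta/(2K)$ and $\beta/(6K)$, after which a union bound over $k \in \{0, \ldots, \bar{n}\}$ consolidates the per-level bounds into the overall confidence $1 - \beta$. I expect the main obstacle to be twofold: (i) verifying rigorously that the left-hand side of equation~\eqref{eq:poly-eq} admits exactly two nonnegative roots, which requires a monotonicity or convexity analysis of the polynomial in $t$ together with a sign analysis at $t = 0$ and $t \to \infty$; and (ii) controlling the lower tail of $V(\pi^*)$, which is substantially more delicate than the upper tail because the event $\{V(\pi^*) < \underline{\epsilon}(k)\}$ is not governed by a simple concentration inequality and is precisely what forces the asymmetric constants $1/2$ and $1/6$ in equation~\eqref{eq:poly-eq}. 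Once both tails are handled via the identity above, the combinatorial bookkeeping closes the proof.
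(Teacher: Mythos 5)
This proposition is not proved in the paper at all: it is imported verbatim (as the attribution ``see~\cite{risk-complexity-scenario}'' indicates) from Garatti and Campi's work on risk and complexity in scenario optimization, so there is no in-paper argument to compare your proposal against. Judged against the actual proof in that reference, your outline correctly identifies the main architecture of the wait-and-judge paradigm: the Helly-type bound $s^* \leq \bar{n}$, the decomposition over candidate support sets using exchangeability, the conditional-independence observation that the $K-k$ non-support scenarios are each satisfied with probability $1 - V(\pi^*)$, the resulting Beta-type integral identity, and the final union bound over $k$. You also correctly flag the two genuinely hard points (root counting for the polynomial, and the lower tail).

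The substantive gap is in the passage from the integral identity to equation~\eqref{eq:poly-eq}. You describe this as ``equating each tail mass as a function of the threshold $t$ to the respective budgets $\beta/(2K)$ and $\beta/(6K)$,'' but that is not how the bound is actually obtained: the measures $F_k$ are unknown and problem-dependent, so the proof must show that the tail bounds hold \emph{uniformly} over all families $\{F_k\}$ consistent with the combinatorial constraints (a worst-case/moment-problem argument, resolved via a duality or extremal-distribution analysis), and the polynomial roots $\underline{t}(k), \bar{t}(k)$ emerge as the tightest thresholds valid for every admissible $F_k$. Your sketch treats $F_k$ as if it were known and the thresholds could be read off by inversion, which skips the core difficulty and also leaves unexplained why the budget splits $\beta/(2K)$ and $\beta/(6K)$ and the truncation range $i$ up to $4K$ take the particular form they do. A second, smaller omission is the non-degeneracy assumption required for the events ``$I$ is exactly the support set'' to determine $\pi^*$ from $\{\delta_i : i \in I\}$ alone; uniqueness of $\pi^*$ by itself does not suffice. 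As a roadmap your proposal is faithful to the cited proof, but as it stands it defers rather than supplies the argument that actually produces the stated polynomial.
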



The guarantee provides upper and lower bounds on the level of risk of the optimal solution $\pi^*$ to Problem~\eqref{eq:classic-scenario} with a level of confidence $1-\beta$. These bounds can be computed by assessing the number of support constraints $s^*$ after computing the optimal solution to Problem~\eqref{eq:classic-scenario}. In general, we can take the parameter $\beta$ to be arbitrarily close to zero, since the solutions of Equation~\eqref{eq:poly-eq} depend logarithmically on $\beta$. We further note that the upper bound on the level of risk typically increases with the number of support constraints. Thus, to provide strong generalization guarantees, we seek decision variables of Problem~\eqref{eq:classic-scenario} that simultaneously lead to a small objective $c^T \pi^*$ and a relatively small number of support constraints.

\subsection{Data-Driven Online Traffic Routing} \label{data-driven-oad}

In this and the following sections, we leverage the scenario optimization paradigm to devise a time-independent data-driven algorithm for the \acrshort{otr} problem whose performance can be formally generalized to unseen instances through Theorem~\ref{prop:scenario-classic-theorem}. The data-driven algorithm has two phases: (i) an offline parameter learning phase and (ii) an online decision-making phase. In this section, we present the general structure of the offline problem that we solve to learn the data-driven algorithm's parameters. Then, we develop a specific parametrization of the data-driven algorithm in Section~\ref{offline-parameter-learning} and show that it is a special case of the scenario optimization Problem~\eqref{eq:classic-scenario} in Section~\ref{data-driven-online-alg}.

We aim to devise an algorithm $\mathbf{A}^*$ for \acrshort{otr} that achieves the minimum possible competitive ratio for all input sequences $\mathcal{I} \in \Delta$. In other words, the objective is to find $\mathbf{A}^*$ that satisfies:
\begin{argmini!}|s|[2]                   
    {\mathbf{A}}                               
    {\alpha \label{eq:OPT-Obj-Full-Data}}   
    {\label{eq:Eg001-Full-Data}}             
    {\mathbf{A}^* =}                                
    \addConstraint{ALG(\mathcal{I}, \mathbf{A})}{\leq \alpha OPT(\mathcal{I}), \quad \forall \mathcal{I} \in \Delta \label{eq:OPTcon1-Full-Data}}    
    \addConstraint{\mathbf{A} \text{ satisfies~\eqref{eq:prob-constraint}-\eqref{eq:capacity-constraints} }, \quad \forall \mathcal{I} \in \Delta}{ \label{eq:OPTcon2-Full-Data}}    
\end{argmini!}
where $\alpha$ is the competitive ratio. Note that to appropriately define the set $\Delta$, we fix the number of arriving users $n$ in the network. Problem~\eqref{eq:OPT-Obj-Full-Data}-\eqref{eq:OPTcon2-Full-Data} is equivalent to the robust optimization problem: 
\begin{align} \label{eq:min-max}
    \mathbf{A}^* = \argmin_{\mathbf{A}: \mathbf{A} \text{ satisfies~\eqref{eq:prob-constraint}-\eqref{eq:capacity-constraints} }, \forall \mathcal{I} \in \Delta} \max_{\mathcal{I} \in \Delta} \frac{ALG(\mathcal{I}, \mathbf{A})}{OPT(\mathcal{I})}
\end{align}
Solving Problem~\eqref{eq:OPT-Obj-Full-Data}-\eqref{eq:OPTcon2-Full-Data} and~\eqref{eq:min-max} is typically intractable, since the problem is discrete, due to the binary constraints, and we are optimizing over the universe of all possible algorithms. To obtain a tractable surrogate problem we consider two simplifications. First, we restrict the algorithm $\mathbf{A}^*$ to lie in a parametric class, which we denote as $\mathcal{A}$. We elaborate on our choice of $\mathcal{A}$ in Section~\ref{offline-parameter-learning}. Second, we allow fractional allocations by relaxing the binary constraints with:
\begin{align} \label{eq:lp-relaxed-constraint}
    x_{ia} \geq 0, \quad \forall i\in [n], \forall a \in [M]
\end{align}
We thus compute $OPT(\mathcal{I})$ for each input sequence $\mathcal{I}$ as the optimal objective of the problem of minimizing~\eqref{eq:OPT-integer-Obj} subject to the constraints~\eqref{eq:prob-constraint},~\eqref{eq:capacity-constraints} and~\eqref{eq:lp-relaxed-constraint}. Note that the optimal objective value of this fractional problem serves as a lower bound on the objective of the integral Problem~\eqref{eq:OPT-integer-Obj}-\eqref{eq:capacity-constraints}. Further, note that the fractional problem is a linear program with $n + 2n M$ constraints and $n M$ decision variables. Since the size of this linear program is polynomial in the inputs defined by $\mathcal{S}$ and $\mathcal{I}$, it follows that $OPT(\mathcal{I})$ can be computed in polynomial time for any input sequence $\mathcal{I}$.

Given the above relaxations, the objective of learning algorithm $\mathbf{A}^*$ can be summarized as follows:
\begin{argmini!}|s|[2]                   
    {\mathbf{A}}                               
    {\alpha \label{eq:OPT-Obj-large-Data}}   
    {\label{eq:Eg001-large-Data}}             
    {\mathbf{A}^* = }                                
    \addConstraint{ALG(\mathcal{I}, \mathbf{A})}{\leq \alpha OPT(\mathcal{I}), \quad \forall \mathcal{I} \in \Delta \label{eq:OPTcon1-large-Data}}    
    \addConstraint{\mathbf{A} \text{ satisfies~\eqref{eq:prob-constraint}, \eqref{eq:capacity-constraints}, \eqref{eq:lp-relaxed-constraint} }, \quad \forall \mathcal{I} \in \Delta}{ \label{eq:OPTcon2-large-Data}}    
    \addConstraint{\mathbf{A}}{\in \mathcal{A} \label{eq:OPTcon3-large-Data}}
\end{argmini!}
We note that Problem~\eqref{eq:OPT-Obj-large-Data}-\eqref{eq:OPTcon3-large-Data} is not equivalent to Problem~\eqref{eq:OPT-Obj-Full-Data}-\eqref{eq:OPTcon2-Full-Data}, and so we introduce a rounding procedure to ensure feasibility for Problem~\eqref{eq:OPT-Obj-Full-Data}-\eqref{eq:OPTcon2-Full-Data}. Doing so enables us to reason about the properties of the original Problem~\eqref{eq:OPT-Obj-Full-Data}-\eqref{eq:OPTcon2-Full-Data}. We further note that even the surrogate problem may be intractable since the probability space could have infinitely many samples resulting in infinite constraints. To efficiently solve the surrogate problem, we use a constraint sampling procedure wherein we learn the optimal parametrization through $K$ i.i.d. input sequences $\mathcal{I}_1, ..., \mathcal{I}_K$ of the \acrshort{otr} problem. Specifically, we learn algorithm $\mathbf{A}^*$ through the solution of the following scenario-based problem:

\begin{argmini!}|s|[2]                   
    {\mathbf{A} \in \mathcal{A}}                               
    {\alpha \label{eq:OPT-Obj-K-Data}}   
    {\label{eq:Eg001-K-Data}}             
    {\mathbf{A}^* = }                                
    \addConstraint{ALG(\mathcal{I}_k, \mathbf{A})}{\leq \alpha OPT(\mathcal{I}_k), \quad \mathcal{I}_k \in \Delta, \forall k \in [K] \label{eq:OPTcon1-K-Data}}    
    \addConstraint{\mathbf{A} \text{ satisfies~\eqref{eq:prob-constraint}, \eqref{eq:capacity-constraints}, \eqref{eq:lp-relaxed-constraint} }, \quad \mathcal{I}_k \in \Delta, \forall k \in [K]}{ \label{eq:OPTcon2-K-Data}}    
\end{argmini!}

The optimal objective $\alpha^*$ of the above problem is the maximum ratio between the total travel cost of the online solution of $\mathbf{A}^*$ and that of the optimal offline solution for $K$ training input sequences. 


\subsection{Offline Parameter Learning for Time-independent Algorithm} \label{offline-parameter-learning}
In this section, we propose a parametric class $\mathcal{A}$ over which we optimize to learn the routing algorithm $\mathbf{A}^*$. To develop strong generalization guarantees to unseen sequences using the scenario approach (Theorem~\ref{prop:scenario-classic-theorem}) we must choose a parametric class $\mathcal{A}$ that reduces the offline learning Problem~\eqref{eq:OPT-Obj-K-Data}-\eqref{eq:OPTcon2-K-Data} to a special case of the scenario optimization Problem~\eqref{eq:classic-scenario}. Furthermore, we seek to develop a parametric class $\mathcal{A}$ that is sufficiently simple for optimization, i.e., Problem~\eqref{eq:OPT-Obj-K-Data}-\eqref{eq:OPTcon2-K-Data} can be computed quickly, by finding a low-dimensional parametrization while also being rich enough to provide good performance on unseen sequences.


The class of algorithms $\mathcal{A}$ we consider can be parametrized by a vector $\mathbf{p}_{\theta} \in \mathbb{R}^M$ for each value-of-time $\theta$. Here $p_{\theta a}$ for each $a \in [M]$ represents the probability that a user with value-of-time $\theta$ is allocated to arc $a$. We can learn the optimal $\mathbf{p}_{\theta}$ through the solution of the following problem.
\begin{argmini!}|s|[2]                   
    {\substack{p_{\theta a}, \\ \forall \theta \in \Theta, a \in [M]}}                               
    {\alpha \label{eq:OPT-Obj-offline}}   
    {\label{eq:Eg001-offline}}             
    {\mathbf{A}^* = }                                
    \addConstraint{\sum_{i=1}^{n} \sum_{a = 1}^M p_{\theta_i a} \theta_i t_j}{\leq \alpha OPT(\mathcal{I}_{\Bar{k}}), \quad \forall \Bar{k} \in [K] \label{eq:OPTcon1-offline}}    
    \addConstraint{p_{\theta a} }{\geq 0, \quad \forall \theta \in \Theta, \forall a \in [M] \label{eq:OPTcon2-offline}}    
    \addConstraint{\sum_{a = 1}^{M} p_{\theta a} }{= 1, \quad \forall \theta \in \Theta \label{eq:OPTcon3-offline}}    
    \addConstraint{\sum_{i = 1}^{n} \mathbbm{1}_{\tau_{l}^{(\Bar{k})} \in [\tau_{i}^{(\Bar{k})}, \tau_{i}^{(\Bar{k})} + t_a]} p_{\theta_i a} }{\leq c_a, \forall a \in [M], \forall \tau_{l}^{(\Bar{k})} \in \{\tau_{1}^{(\Bar{k})}, ..., \tau_{n}^{(\Bar{k})} \}, \forall \Bar{k} \in [K] \label{eq:OPTcon4-offline}}    
\end{argmini!}
Here,~\eqref{eq:OPTcon1-offline} are competitive ratio constraints ensuring that the total cost of the optimal parametrization is no more than $\alpha$ times the cost of the optimal offline solution,~\eqref{eq:OPTcon2-offline} are non-negativity constraints,~\eqref{eq:OPTcon3-offline} are assignment constraints, and~\eqref{eq:OPTcon4-offline} are capacity constraints. Further, $\tau_{i}^{(\Bar{k})}$ is the arrival time of agent $i$ in input sequence $\Bar{k} \in [K]$. Note that this parametrization only depends on users' value-of-time and is independent of the traffic states or the time at which a given user arrives. The dependency on the value-of-time of the optimal parametrization ensures that the allocation decision is consistent across users belonging to a given value-of-time. The above parametrization can be naturally extended to a time-dependent setting by learning allocation probabilities $\mathbf{p}_{\theta}^{j}$ for different time intervals $j$ at which users arrive. We explore such a parametrization in more detail in Section~\ref{data-driven-online-alg}. We note that one could also choose parametrizations that depend on the traffic state, which would give the parametrization more power in catering specific allocation decisions based on the set of possible traffic states. However, since the number of traffic states can typically be large and is a function of user arrival rates, accounting for traffic states would result in a high-dimensional parametrization without giving much more decision making power than the time-dependent algorithm (see Section~\ref{data-driven-online-alg}) that adjusts allocations based on changes in user arrival rates. 
Our low-dimensional parametrization ensures that Problem~\eqref{eq:OPT-Obj-offline}-\eqref{eq:OPTcon4-offline} is tractable, which may not be possible with more complex parametrizations. Problem~\eqref{eq:OPT-Obj-offline}-\eqref{eq:OPTcon4-offline} is a linear program with $|\Theta| M$ decision variables and $|\Theta| M + |\Theta| + K + n M K$ constraints.

Algorithm~\ref{alg:Data-Driven} sketches our data-driven online algorithm that proceeds in two phases. It first learns the optimal parametrization $\mathbf{p}_{\theta}$ for each $\theta \in \Theta$ through the solution of the offline Problem~\eqref{eq:OPT-Obj-offline}-\eqref{eq:OPTcon4-offline}. These learned parameters are then used to make online decisions for each test sequence as follows: when a user with a value-of-time $\theta$ arrives, the user is allocated to arc $a$ with probability $p_{\theta a}$.
\begin{algorithm} 
\SetAlgoLined
\SetKwInOut{Input}{Input}\SetKwInOut{Output}{Output}
 \textbf{PHASE I: Parameter Learning} \\
 \For{$k = 1, ..., K$}{
    $OPT_k \leftarrow $ Solution of fractional \acrshort{otr} for input sequence $\mathcal{I}_k$ \;
 }
 $\mathbf{p}_{\theta}^* \leftarrow \argmin $~\eqref{eq:OPT-Obj-offline} s.t. \eqref{eq:OPTcon1-offline}-\eqref{eq:OPTcon4-offline} for each type $\theta$  \;
 \textbf{PHASE II: Online Decision Making} \\
 \textbf{When User $i \in [n]$ in Test Sequence arrives:} \\
 \quad \quad Allocate $i$ to arc $a$ with probability $p_{\theta_i a}^*$
\caption{Data-driven Algorithm}
\label{alg:Data-Driven}
\end{algorithm}

Note that the allocation probabilities we learn in Algorithm~\ref{alg:Data-Driven} serve as a fractional heuristic for the corresponding discrete allocations, which must be made in the context of the \acrshort{otr} problem. That said, we expect that as the number of users increases, the online solution corresponding to the discrete allocations will converge to that of the fractional allocations by the law of large numbers.


\subsection{Generalization Guarantee for Data-Driven Algorithm} \label{data-driven-online-alg}
We now present a probabilistic generalization guarantee for Algorithm~\ref{alg:Data-Driven} that learns the optimal parametrization of Problem~\eqref{eq:OPT-Obj-offline}-\eqref{eq:OPTcon4-offline}. In particular, Corollary~\ref{cor:scenario-cor} establishes an explicit bound on the level of risk based on the number of support constraints of the offline learning Problem~\eqref{eq:OPT-Obj-offline}-\eqref{eq:OPTcon4-offline}.


\begin{corollary} (Theoretical Bounds on Risk) \label{cor:scenario-cor}
Fix a parameter $\beta \in (0, 1)$ and suppose that the solution to the Problem~\eqref{eq:OPT-Obj-offline}-\eqref{eq:OPTcon4-offline} is unique for instances $\mathcal{I}_k$ for $k \in [K]$. Let $s^*$ be the number of support constraints of Problem~\eqref{eq:OPT-Obj-offline}-\eqref{eq:OPTcon4-offline} and let $\underline{\epsilon}\left(s^{*}\right), \bar{\epsilon}(s^*)$ be computed through the solutions $\underline{t}(s^*), \Bar{t}(s^*)$ of
\begin{align} \label{eq:poly-scenario-otr}
     {K \choose s^*} t^{N-s^*}-\frac{\beta}{2 K} \sum_{i=s^*}^{K-1} {i \choose s^*} t^{i-s^*}-\frac{\beta}{6 K} \sum_{i=N+1}^{4 K} {i \choose s^*} t^{i-s^*}=0
\end{align}
as in Theorem~\ref{prop:scenario-classic-theorem}. Then, it follows that
\begin{align*}
    \mathbb{P}\left\{\underline{\epsilon}\left(s^{*}\right) \leq V\left(\pi^{*}\right) \leq \bar{\epsilon}\left(s^{*}\right)\right\} \geq 1-\beta.
\end{align*}
\end{corollary}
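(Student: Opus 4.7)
The plan is to show that the offline learning Problem~\eqref{eq:OPT-Obj-offline}-\eqref{eq:OPTcon4-offline} is a special instance of the prototypical scenario optimization Problem~\eqref{eq:classic-scenario}, so that Proposition~\ref{prop:scenario-classic-theorem} can be invoked directly. To this end, I will first identify the decision vector, the uncertainty, the feasible set $\Pi$, and the cost vector $c$; then verify the convexity of the sampled constraints in the decision variables; and finally transcribe the statement of Proposition~\ref{prop:scenario-classic-theorem} to the present setting.

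First, I would write the decision vector as $\pi = (\alpha, \{p_{\theta a}\}_{\theta \in \Theta, a \in [M]}) \in \mathbb{R}^{1 + |\Theta| M}$, so that $\bar{n} = 1 + |\Theta| M$. The linear objective $\alpha$ can then be expressed as $c^T \pi$ with $c = (1, 0, \ldots, 0)$. The uncertainty vector is the sampled input sequence, $\delta = \mathcal{I}$, drawn i.i.d.\ from the underlying distribution on $\Delta$. Constraints~\eqref{eq:OPTcon2-offline} and~\eqref{eq:OPTcon3-offline} do not depend on $\delta$ and are linear in $\pi$, so I would fold them into the closed convex set $\Pi := \{\pi : p_{\theta a} \geq 0,\ \sum_a p_{\theta a} = 1 \text{ for all } \theta \in \Theta\}$. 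The remaining constraints~\eqref{eq:OPTcon1-offline} and~\eqref{eq:OPTcon4-offline} are the scenario constraints $f(\pi, \delta_k) \leq 0$.

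Next, I would verify that, for each fixed $\delta = \mathcal{I}_{\bar k}$, the functions defining~\eqref{eq:OPTcon1-offline} and~\eqref{eq:OPTcon4-offline} are convex in $\pi$. Observe that $OPT(\mathcal{I}_{\bar k})$ is a known scalar once $\mathcal{I}_{\bar k}$ is fixed (it is the optimal value of the LP relaxation computed in Phase~I of Algorithm~\ref{alg:Data-Driven}), and the left-hand side of~\eqref{eq:OPTcon1-offline} is a linear combination of the $p_{\theta_i a}$. Hence~\eqref{eq:OPTcon1-offline} reads $\sum_{i,a} t_a \theta_i p_{\theta_i a} - \alpha\, OPT(\mathcal{I}_{\bar k}) \leq 0$, which is linear, hence convex, in $\pi$. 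Similarly, each capacity constraint in~\eqref{eq:OPTcon4-offline} is linear in $\pi$ once the arrival times $\{\tau_i^{(\bar k)}\}$ (and therefore the indicator coefficients) are fixed by $\mathcal{I}_{\bar k}$. Thus Problem~\eqref{eq:OPT-Obj-offline}-\eqref{eq:OPTcon4-offline} instantiates Problem~\eqref{eq:classic-scenario} with the indicated $\pi$, $c$, $\Pi$, and $f$.

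Finally, given the uniqueness hypothesis on the solution $\pi^*$ of Problem~\eqref{eq:OPT-Obj-offline}-\eqref{eq:OPTcon4-offline} for the $K$ sampled instances, and assuming that $K > \bar{n} = 1 + |\Theta| M$, I would invoke Proposition~\ref{prop:scenario-classic-theorem} verbatim with the quantities just identified. This yields the two roots $\underline{t}(s^*), \bar{t}(s^*) \in [0,\infty)$ of the polynomial equation~\eqref{eq:poly-scenario-otr} (which is exactly~\eqref{eq:poly-eq} for this problem), and the resulting bounds $\underline{\epsilon}(s^*) = \max\{0, 1 - \bar{t}(s^*)\}$, $\bar{\epsilon}(s^*) = 1 - \underline{t}(s^*)$, produce the probabilistic containment $\mathbb{P}\{\underline{\epsilon}(s^*) \leq V(\pi^*) \leq \bar{\epsilon}(s^*)\} \geq 1 - \beta$, which is the claim. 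The only delicate step is the first one: checking that Problem~\eqref{eq:OPT-Obj-offline}-\eqref{eq:OPTcon4-offline} truly fits the template of Problem~\eqref{eq:classic-scenario}; once the identification is made, the generalization guarantee is an immediate specialization of Proposition~\ref{prop:scenario-classic-theorem}.
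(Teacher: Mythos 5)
Your proposal is correct and follows essentially the same route as the paper: identify the decision vector, fold the instance-independent constraints into the convex set $\Pi$, check that the sampled competitive-ratio and capacity constraints are linear (hence convex) in the decision variables, and invoke Proposition~\ref{prop:scenario-classic-theorem}. The only cosmetic difference is that the paper explicitly collapses the $n+1$ per-instance constraints into a single scalar constraint via $f(\mathbf{p},\mathcal{I}) \doteq \max_l f_l(\mathbf{p},\mathcal{I})$, a step you leave implicit but which is immediate since a finite maximum of convex functions is convex.
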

\begin{proof}{Proof}
To prove this claim, we aim to establish that Problem~\eqref{eq:OPT-Obj-offline}-\eqref{eq:OPTcon4-offline} is a special case of Problem~\ref{eq:classic-scenario}. We first note that Constraints~\eqref{eq:OPTcon2-offline} and~\eqref{eq:OPTcon3-offline} are independent of the problem instance and form a closed and convex set, analogous to the closed and convex set~$\Pi$ in the scenario optimization Problem~\eqref{eq:classic-scenario}. For this problem we have that $\Pi \subseteq \mathbb{R}^{|\Theta| M}$. However, due to the constraint $\sum_a p_{\theta a} = 1$, the optimal parametrization lies on a hyperplane of dimension $|\Theta| (M-1)$. Next, there is one competitive ratio constraint~\eqref{eq:OPTcon1-offline} and $n$ capacity constraints~\eqref{eq:OPTcon4-offline} for each instance, resulting in a total of $n+1$ constraints of the form: $f_{1}(\textbf{p}, \mathcal{I}) \leq 0, \ldots, f_{n+1}(\textbf{p}, \mathcal{I}) \leq 0$. Note that these constraints reduce to a single scalar valued constraint function in the scenario optimization Problem~\eqref{eq:classic-scenario} since $f(\textbf{p}, \mathcal{I}) \doteq \max _{l=1, \ldots, n + 1} f_{l}(\textbf{p}, \mathcal{I})$. Since the constraints are convex and the objective function is linear, we have that Problem~\eqref{eq:OPT-Obj-offline}-\eqref{eq:OPTcon4-offline} is a special case of the scenario optimization Problem~\ref{eq:classic-scenario}. Thus, the result follows through a direct application of Theorem~\ref{prop:scenario-classic-theorem}.
\end{proof}

This result provides a probabilistic generalization guarantee for the online algorithm that utilizes the solution of the offline learning Problem~\eqref{eq:OPT-Obj-offline}-\eqref{eq:OPTcon4-offline} on unseen test instances. The key to establishing the above result lies in the choice of the parametric class $\mathcal{A}$ that yields a problem that is conducive to the application of the scenario approach generalization guarantee (Theorem~\ref{prop:scenario-classic-theorem}). To ensure that the learning problem is sufficiently simple for optimization, we developed a parametric class with a small number of decision variables. 

Finally, we note that the competitive ratio obtained through the solution of Problem~\eqref{eq:OPT-Obj-offline}-\eqref{eq:OPTcon4-offline} provides an upper bound on the best achievable competitive ratio since we have chosen a specific algorithm parametrization in Problem~\eqref{eq:OPT-Obj-offline}-\eqref{eq:OPTcon4-offline}. 

\paragraph{Extension to Time-Dependent Algorithm.} 


In the following, we elucidate an extension of Algorithm~\ref{alg:Data-Driven} to the time-dependent setting where the probability of allocating users varies with the change in the user arrival rates. The motivation for considering the time-dependent setting stems from the time dependency of the arrival process of users in typical traffic routing applications. Thus, we consider the class of algorithms $\mathcal{A}'$ parametrized by vectors $\mathbf{p}_{\theta}^j \in \mathbb{R}^{M}$ for each value-of-time $\theta \in \Theta$ and time interval $j \in [q]$ for some $q \in \mathbb{N}$. Here $p_{\theta a}^j$ for each arc $a \in [M]$ and time interval $j \in [q]$ represents the probability that a user with a value-of-time of $\theta$ is allocated to arc $a$ during time interval $j$. Note that we can write a linear program similar to Problem~\eqref{eq:OPT-Obj-offline}-\eqref{eq:OPTcon4-offline} to compute the optimal allocation probabilities $\mathbf{p}_{\theta}^j$ offline. The learned parameters can then be used to make online allocations in an analogous manner to that of the time-independent data-driven algorithm presented in Algorithm~\ref{alg:Data-Driven}. In particular, when a user $i$ with a value-of-time $\theta_i$ arrives during time interval $j$, i.e., $\tau_i \in [\mu_j, \mu_{j+1})$ for $0 \leq \mu_j < \mu_{j+1}$, then the user is allocated to arc $a$ with probability $p_{\theta_i a}^j$. Finally, we note the following key observations regarding the time-dependent parametrization that highlight the risk and offline objective value properties in this setting.

\begin{remark}
Following a similar reasoning to that used in the proof of Corollary~\ref{cor:scenario-cor}, the linear program used to compute the allocation probabilities $\mathbf{p}_{\theta}^j$ can be shown to be a special case of the scenario optimization Problem~\eqref{eq:classic-scenario}. As a result, the theoretical bounds on the risk obtained in Corollary~\ref{cor:scenario-cor} extend to this setting as well.
\end{remark}

\begin{remark} \label{rmk:opt-obj-comparison}
The optimal objective value $\alpha^*_\mathcal{T}$ of the linear program to compute the time-dependent allocation probabilities is at most the optimal objective value $\alpha^*$ of the linear Program~\eqref{eq:OPT-Obj-offline}-\eqref{eq:OPTcon4-offline} to compute time-independent allocation probabilities. This follows since a special case of the time-dependent allocation probabilities corresponds to the time-independent setting when the allocation probabilities for the different time intervals are exactly the same, i.e., $\mathbf{p}_{\theta}^j = \mathbf{p}_{\theta}^{j'}$ for all time intervals $j, j' \in [q]$. As a result, the time-dependent allocation achieves superior performance as compared to the time-independent allocation when comparing the objective value of their corresponding offline linear programs. That is, the empirical competitive ratio on the training instances for the time-dependent allocation probabilities can never be more than that corresponding to the time-independent allocation probabilities, i.e., $\alpha^*_{\mathcal{T}} \leq \alpha^*$.
\end{remark} 

Remark~\ref{rmk:opt-obj-comparison} highlights that the time-dependent algorithm is guaranteed to have a superior performance as compared to the time-independent algorithm on the training set. However, the generalization of the in-sample performance guarantees of both algorithms to unseen sequences depends on the number of support constraints, which we determine through numerical experiments in the next section.



\section{Numerical Comparison of Data Driven and Greedy Algorithms} \label{main-numerical}
We now evaluate the performance of the data-driven approach for both the time-independent (Algorithm~\ref{alg:Data-Driven}) and time-dependent (see Section~\ref{data-driven-online-alg}) parameter learning settings through numerical experiments. To this end, we first numerically validate the theoretical bounds on the level of risk obtained in Corollary~\ref{cor:scenario-cor} through an analysis of the number of support constraints corresponding to the offline parameter learning problem. Furthermore, to gauge the efficacy of the data-driven algorithms, we compare their performance to that of the greedy algorithm by analyzing the distribution of the ratio between the algorithm cost to that of the optimum offline solution on unseen input sequences. The interest for comparing the performance of these algorithms to the greedy algorithm is both theoretical and practical. On the theoretical front, the greedy algorithm serves as a benchmark since it has the lowest ratio between the online and offline solutions amongst all deterministic algorithms for a two-arc parallel network. On the applied front, state-of-the-art traffic navigational applications often route users using a greedy algorithm \citep{Cabannes2017TheIO}.


In the following, we first define the uncertainty set on which we built our input sequences $\mathcal{I}$ (Section~\ref{uncertainty-set}). Then, we introduce our experimental design, based on an application case of the San Francisco Bay Area, CA (Section~\ref{experiment-design}). Finally, we present numerical results (Section~\ref{results-numerical}) for both this application setting and a range of synthetic scenarios to investigate the performance of the data-driven and greedy algorithms for different user arrival profiles that may be encountered in practice.

\subsection{Definition of the Uncertainty Set and Time-Dependent Data-Driven Algorithm} \label{uncertainty-set}
Given a fixed parallel transportation graph with specified arc capacities and travel times, the sources of uncertainty that completely define an input sequence are the users' arrival times and their values of time. Thus, we represent an input sequence $\mathcal{I}$ as an $n \times 2$ dimensional matrix with an arrival time and a value-of-time for each of the $n$ users. To model typical traffic routing applications wherein the user arrival rate varies at different times of the day we create input sequences as follows.
\begin{enumerate}
    \item When a user arrives into the system, she has a value-of-time $\theta \in \Theta$ with a known probability $\Tilde{p}_{\theta}$. We further have that $\sum_{\theta \in \Theta}\Tilde {p}_{\theta} = 1$.
    \item In each time interval $j$ lying in the set $\mathcal{T} = \{[\mu_1, \mu_2), [\mu_2, \mu_3), \ldots, [\mu_q, \mu_{q+1})\}$, where $\mu_1 = 0$ and $\mu_{q+1} = \infty$, users arrive according to a Poisson process with a rate $\lambda_{j}$, i.e., the inter-arrival times between subsequent user arrivals in the time period $[\mu_{j}, \mu_{j+1})$ is exponentially distributed with a rate $\lambda_{j}$.
\end{enumerate}

\subsection{Experimental Design} \label{experiment-design}
The performance of both the data-driven algorithms as well as the greedy algorithm depends on the specific parameters of the underlying problem instance. To design a numerical experiment that reflects a realistic application, we proceed as follows: first, we derive a representative instance based on real-world data. Then, to solve the offline linear programs corresponding to both data-driven algorithms we transform these instances to create a scaled representation of the real-world scenarios. Note that scaling travel times and users' values of time is without loss of generality because the ratio $\frac{ALG(\mathcal{I}, \mathbf{A})}{OPT(\mathcal{I})}$ will remain unchanged as the objective function of Problem~\eqref{eq:OPT-integer-Obj}-\eqref{eq:capacity-constraints} is linear. Next, we reduce the size of the instances to 120 users only for the purposes of the case study so that we can quickly generate solutions for many scenarios with low computational effort. 
We note that we can also train the algorithm on larger sized instances, e.g., 100 instances with 500 users, within a time span of a few minutes. Furthermore, we note that due to the nature of the algorithm parametrization, the algorithm can readily be applied online to larger test instances without losing its computational efficiency. We further expect by the law of large numbers that our results in the scaled setting will provide good insights for larger problem instances, since user arrivals are stochastic. Finally, we create additional instances by varying the arrival profiles of users to create other meaningful scenarios.



\subsubsection{Representative scenarios.} We derive representative instances based on data from the the San Francisco Bay Area for travel on major highways (Figure~\ref{highway-map}). In particular, we consider three alternative routes, which we derive based on traffic data from TomTom \citep{tom-tom-data}, and select the routes with the minimum (green route), median (blue route), and maximum (red route) travel times between the respective origin-destination pair. Table~\ref{tab: data-scenario-1} presents the travel times of these routes. We note here that the parallel network abstraction is generally a good first-order approximation for highway commuting~\citep{caltrans}, which is considered in the traffic routing application depicted in Figure~\ref{highway-map}.

To derive user arrival rates, we used data from the Caltrans PeMS database\footnote{\url{https://pems.dot.ca.gov/}} to obtain representative user flow rates at different points along the sections of the routes corresponding to the three highways depicted in Figure~\ref{highway-map}. The user flow rates presented in Figure~\ref{city-center-map} are derived for the morning rush on Tuesday, July 6, 2021 for the following sections of the three freeways - (i) South Airport Blvd. for the US-101N freeway (Route 1), (ii) Westborough Blvd. for the I-280N freeway (Route 2), and (iii) Tennyson Rd. for the I-880 freeway (Route 3). We note that the user flow rate data represents a typical pattern of user flows during the morning rush for each of the three freeways.


\begin{figure}[tbh!]
  \centering
  \begin{minipage}[b]{0.48\textwidth}
    \includegraphics[width=\textwidth, height=6cm]{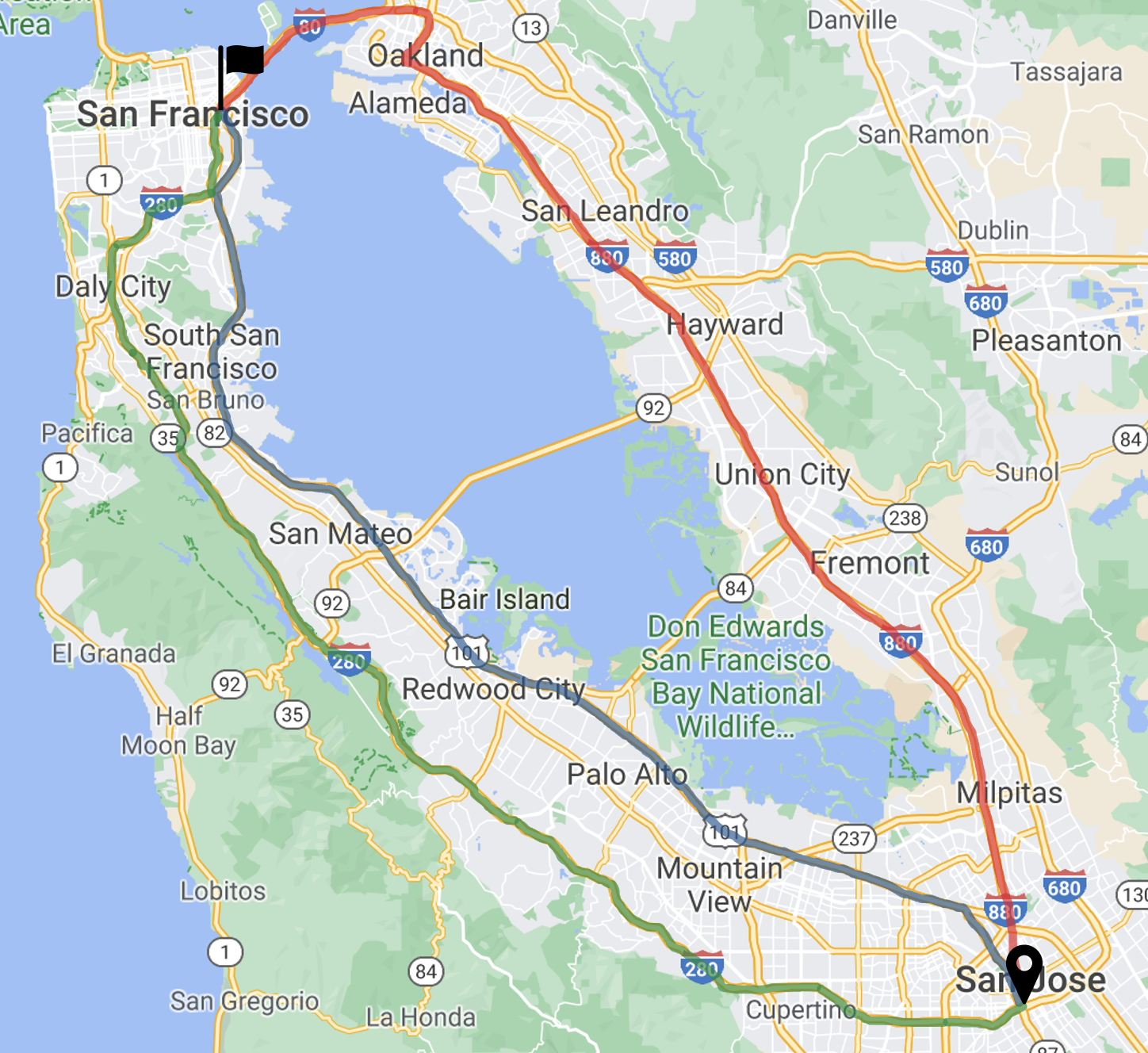}
    \caption{Three selected routes from San Jose to San Francisco for the highway application. The destination is marked with a flag and the origin is marked with a map pin.}
    \label{highway-map}
  \end{minipage}
  \hfill
  \begin{minipage}[b]{0.48\textwidth}
    \definecolor{mycolor1}{rgb}{0.00000,0.44700,0.74100}%
\definecolor{mycolor2}{rgb}{0.85000,0.32500,0.09800}%
\definecolor{mycolor3}{rgb}{0.1,0.7,0.2}%
\begin{tikzpicture}

\begin{axis}[%
width=1.8in,
height=1.6in,
at={(0in,0in)},
scale only axis,
xmin=0,
xmax=10,
xtick={0, 2, 4, 6, 8, 10},
xticklabels={{05:00},{06:00},{07:00},{08:00},{09:00},{10:00}},
xticklabel style={rotate=30},
xlabel style={font=\color{white!15!black}},
xlabel={Time of Day},
ymin=0,
ymax=7000,
ylabel style={font=\color{white!15!black}},
ylabel={Flow (veh/h)},
axis background/.style={fill=white},
legend style={legend cell align=left, align=left, draw=white!15!black, at={(1,0.4)}}
]
\addplot[const plot, color=mycolor1, line width=2pt] table[row sep=crcr] {%
0	3356\\
2	5160\\
4	6011\\
6	6464\\
8	5950\\
10	5950\\
};
\addlegendentry{Route 1}

\addplot[const plot, color=mycolor3, line width=2pt] table[row sep=crcr] {%
0	1101\\
2	2334\\
4	3593\\
6	3832\\
8	3511\\
10	3511\\
};
\addlegendentry{Route 2}

\addplot[const plot, color=mycolor2, line width=2pt] table[row sep=crcr] {%
0	2882\\
2	4257\\
4	4596\\
6	4991\\
8	4487\\
10	4487\\
};
\addlegendentry{Route 3}

\end{axis}

\begin{axis}[%
width=\textwidth,
height=1.2in,
at={(0in,0in)},
scale only axis,
xmin=0,
xmax=1,
ymin=0,
ymax=1,
axis line style={draw=none},
ticks=none,
axis x line*=bottom,
axis y line*=left,
legend style={legend cell align=left, align=left, draw=white!15!black}
]
\end{axis}
\end{tikzpicture}
    \caption{Flow of users during the weekday morning rush for the three highways corresponding to the routes depicted in Figure~\ref{highway-map}. Data is obtained for Tuesday, July 6, 2021.}
    \label{city-center-map}
  \end{minipage}
\end{figure}

\subsubsection{Parameter transformation.} We now transform both scenarios into a scaled representation by (i) scaling users' values of time and the arc travel times and (ii) adjusting the arc capacity and user arrival rates based on the scaled travel times. Our transformation is as follows.

To derive representative values of time for different users, we use the minimum, median, and maximum income levels in the San Francisco Bay Area \cite{SFUrgency}. In particular, we consider three user types and normalize each user type's value-of-time as shown in Table~\ref{tab: urgency-vals}, such that 
$$\theta_1 = 1,\quad \theta_2 = \frac{\text{Median Income}}{\text{Minimum Income}},\quad \theta_3 = \frac{\text{Maximum Income}}{\text{Minimum Income}}.$$
Note that we take the income of users as a surrogate representation of user's values of time. While a range of other value-of-time representations could have been chosen, we assume a proportionality between the value-of-time and income level of users for simplicity. Doing so enables us to capture a range of user values of time that are likely to be observed in practice since values of time tend to increase with the income level of users~\citep{thomas1970value}. 
We further assume that the share of the population belonging to each value-of-time is given by specified quantile cutoffs. We set these cutoffs based on the average of the minimum and median incomes and the average of the median and maximum incomes. We round the data in Table~\ref{tab: urgency-vals} to the nearest \$10000 and calculate the fraction of users in each quantile range using Figure 7 of \cite{SFUrgency}.
\begin{table}[tbh!] 
\centering
\caption{Values of Time of Users in San Francisco Bay Area, CA }
\footnotesize
\begin{tabular}{lccc}
\toprule
  & Minimum & Median & Maximum \\
\midrule
Household Income SF [\$] & 10000 & 90000 & 200000   \\
Value-of-Time [\$/Time Unit] & 1 & 9  & 20   \\
Quantile Range [\$] & [10000, 50000] & [50000, 150000] & [150000, 200000]\\
Fraction of Users & 0.32 & 0.39 & 0.29 \\
\bottomrule
\end{tabular} \label{tab: urgency-vals}
\end{table}

For the numerical experiments, we consider a population of 120 users. Accordingly, we set the capacity of the slowest route to 120 (see Table~\ref{tab: data-scenario-1}) to ensure the system's feasibility and scale the real-world travel times such that the ratio of the costs $t_j$ closely matches the ratios of the real-world travel times for each scenario. Finally, we set the user arrival rate and throughput for the first two routes. To do so, we first note that the first two routes are the predominantly used ones, as their travel times are much lower compared to the third route. Since we have two degrees of freedom, arrival rate and vehicle throughput, we fix the vehicle throughput on the first two routes to one vehicle per time unit. Note that setting the same throughput value for the first two routes is reasonable since the highways connecting San Jose and San Francisco have similar capacities~\citep{pems-database}. To achieve this throughput, we set capacities as given in Table~\ref{tab: data-scenario-1}.

\begin{table}[tbh!]
\centering
\caption{Cost and Capacity Parameters of Input instance for the highway scenario.}
\footnotesize
\begin{tabular}{lccc}
\toprule
   Route     & Green & Blue & Red \\ 
\midrule
Travel Time [s] & 2660 & 3200 & 17560 \\ 
Cost ($t_j$) in Model [Time Units] & 20                    & 24                     & 130               \\ 
Capacity ($c_j$) in Model [Number of Users] & 20                    & 24                    & 100         \\ 
\bottomrule
\end{tabular} \label{tab: data-scenario-1}
\end{table}

Next, to set the arrival rates, we consider the period of the morning rush on weekdays wherein the throughput pattern can be modelled as step function with constant user arrival rates during each time period, cf. Figure~\ref{city-center-map}. In particular, we average the vehicle flows depicted in Figure~\ref{city-center-map} across the three routes for each hour interval and set the flow rate of vehicles between the time interval $5-6$a.m. as $1.2$ vehicles per time unit and scale the width of each time interval to represent fourteen time units, i.e., $\mu_{j+1} - \mu_j = 14$ for all $j \in \{1, 2, 3, 4 \}$. Note that this represents an under-congested condition since the arrival rate of users in this time period is lower than the combined throughput of the first two routes. We then scale the remaining flow rates for the other time intervals as:
\begin{align*}
    \lambda_j = \frac{\text{Average Flow rate in time interval j}}{\text{Average Flow rate between } 5-6\text{a.m.}} \times 1.2
\end{align*}


\subsubsection{Additional Scenarios.} Since the above scenario only captures a limited set of parameters, we additionally explore a broader range of arrival rates that capture traffic patterns beyond the one considered above. In particular, we model the flow of traffic through the course of an entire day by modelling both the morning and afternoon peaks and consider the following arrival rate scenarios as depicted in Table~\ref{tab:synthetic-scenarios}. In particular, the five scenarios represent (i) a constant arrival rate, (ii) a high rush hour arrival rate, (iii) a low rush hour arrival rate, (iv) a higher afternoon peak arrival rate, and (v) a higher morning peak arrival rate. For completeness, we also summarize the arrival rate profile of the above described highway scenario.


\begin{table}[tbh!] 
\centering
\caption{Arrival Rate profiles of the highway scenario as well as the constructed scenarios representing both the morning and afternoon rush periods.}
\footnotesize
\begin{tabular}{lcccccc}
\toprule
Time Interval & \multicolumn{6}{c}{Arrival Rate} \\
\multicolumn{1}{c}{[Time Units]} & \multicolumn{6}{c}{[Number of Users/Time Unit]} \\
\cmidrule(lr{1em}){2-7}
  & Highway Scenario & Scenario 1 & Scenario 2 & Scenario 3 & Scenario 4 & Scenario 5 \\
\midrule
$[0, 14)$ & 1.2 & 2 & 2 & 2 & 2 & 2 \\
$[14, 28)$ & 2 & 2 & 2.5  & 2.25 & 2.25 & 2.5  \\
$[28, 42)$ & 2.25 & 2 & 2 & 2 & 2 & 2 \\
$[42, 56)$ & 2.5 & 2 & 2.5  & 2.25 & 2.5 & 2.25 \\
$[56, \infty)$ & 2.25 & 2 & 2  & 2 & 2 & 2  \\
\bottomrule
\end{tabular} \label{tab:synthetic-scenarios}
\end{table}

\subsection{Results} \label{results-numerical}

\paragraph{Assessment of Theoretical Risk Bounds.} We now assess the theoretical bounds on the risk established in Corollary~\ref{cor:scenario-cor} for both the time-dependent and time-independent data-driven algorithms for the scenarios listed in Table~\ref{tab:synthetic-scenarios}. To this end, we first determined the number of support constraints corresponding to the offline learning problem, i.e., Problem~\eqref{eq:OPT-Obj-offline}-\eqref{eq:OPTcon4-offline} for the time-independent data-driven algorithm and the analogous problem for the time-dependent data-driven algorithm. We then evaluated the solution of the polynomial Equation~\eqref{eq:poly-scenario-otr} for the parameter $\beta = 10^{-6}$, the number of training instances $K = 100$ and the above determined number of support constraints. Having derived the theoretical upper and lower bounds on the risk, we then found the percentage of test instances for which either the competitive ratio constraint or the capacity constraints were violated for each of the scenarios in Table~\ref{tab:synthetic-scenarios}. A total of 100 test instances were considered for each scenario.

Figure~\ref{fig:theory-bounds} depicts for both the time-dependent and time-independent algorithms that the risk, i.e., the probability of constraint violation, on the test instances was within the specified theoretical bounds on the risk as obtained in Corollary~\ref{cor:scenario-cor}. We further note from Figure~\ref{fig:theory-bounds} that both the theoretical bounds on the risk as well as the empirically observed proportion of constraint violation are lower for the time-independent data-driven algorithm. This observation implies a trade-off between the optimal offline objective, which is guaranteed to be lower for the time-dependent data-driven algorithm (see Remark~\ref{rmk:opt-obj-comparison}), and the risk of the optimal parametrization, which is higher for the time-dependent data-driven algorithm.

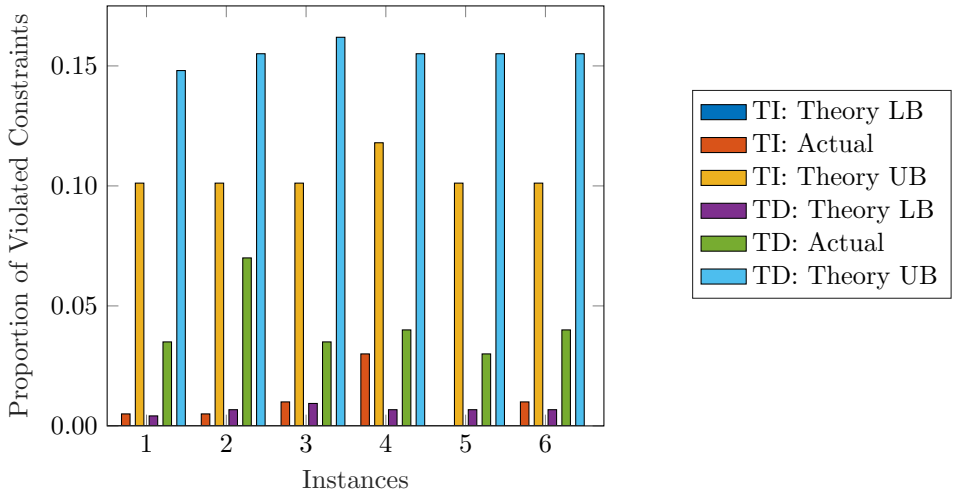
\begin{figure}[tbh!]
  \centering
%
%
\definecolor{mycolor1}{rgb}{0.00000,0.44700,0.74100}%
\definecolor{mycolor2}{rgb}{0.85000,0.32500,0.09800}%
\definecolor{mycolor3}{rgb}{0.92900,0.69400,0.12500}%
\definecolor{mycolor4}{rgb}{0.49400,0.18400,0.55600}%
\definecolor{mycolor5}{rgb}{0.46600,0.67400,0.18800}%
\definecolor{mycolor6}{rgb}{0.30100,0.74500,0.93300}%
\begin{tikzpicture}

\begin{axis}[%
width=2.6in,
height=2.2in,
at={(0in,0in)},
scale only axis,
bar shift auto,
xmin=0.506666666666667,
xmax=6.7333333333333,
xtick={1, 2, 3, 4, 5, 6},
xlabel style={font=\color{white!15!black}},
xlabel={Instances},
ymin=0,
ymax=0.35,
yticklabels = {0, 0.00, 0.05, 0.10, 0.15, 0.20, 0.25, 0.30, 0.35},
ylabel style={font=\color{white!15!black}},
ylabel={Proportion of Violated Constraints},
axis background/.style={fill=white},
legend style={at = {(1.7, 0.8)}, legend cell align=left, align=left, draw=white!15!black}
]
\addplot[ybar, bar width=0.107, fill=mycolor1, draw=black, area legend] table[row sep=crcr] {%
1	0\\
2	0\\
3	0\\
4	0\\
5	0\\
6	0\\
};
\addplot[forget plot, color=white!15!black] table[row sep=crcr] {%
0.506666666666667	0\\
6.49333333333333	0\\
};
\addlegendentry{TI: Theory LB}

\addplot[ybar, bar width=0.107, fill=mycolor2, draw=black, area legend] table[row sep=crcr] {%
1	0.01\\
2	0.01\\
3	0.02\\
4	0.06\\
5	0\\
6	0.02\\
};
\addplot[forget plot, color=white!15!black] table[row sep=crcr] {%
0.506666666666667	0\\
6.49333333333333	0\\
};
\addlegendentry{TI: Actual}

\addplot[ybar, bar width=0.107, fill=mycolor3, draw=black, area legend] table[row sep=crcr] {%
1	0.202324870056299\\
2	0.202324870056299\\
3	0.202324870056299\\
4	0.235962661262276\\
5	0.202324870056299\\
6	0.202324870056299\\
};
\addplot[forget plot, color=white!15!black] table[row sep=crcr] {%
0.506666666666667	0\\
6.49333333333333	0\\
};
\addlegendentry{TI: Theory UB}

\addplot[ybar, bar width=0.107, fill=mycolor4, draw=black, area legend] table[row sep=crcr] {%
1	0.00834051259958124\\
2	0.0135121585749465\\
3	0.018714047187938\\
4	0.0135121585749465\\
5	0.0135121585749465\\
6	0.0135121585749465\\
};
\addplot[forget plot, color=white!15!black] table[row sep=crcr] {%
0.506666666666667	0\\
6.49333333333333	0\\
};
\addlegendentry{TD: Theory LB}

\addplot[ybar, bar width=0.107, fill=mycolor5, draw=black, area legend] table[row sep=crcr] {%
1	0.07\\
2	0.14\\
3	0.07\\
4	0.08\\
5	0.06\\
6	0.08\\
};
\addplot[forget plot, color=white!15!black] table[row sep=crcr] {%
0.506666666666667	0\\
6.49333333333333	0\\
};
\addlegendentry{TD: Actual}

\addplot[ybar, bar width=0.107, fill=mycolor6, draw=black, area legend] table[row sep=crcr] {%
1	0.296129061051403\\
2	0.310155411944916\\
3	0.323866715779936\\
4	0.310155411944916\\
5	0.310155411944916\\
6	0.310155411944916\\
};
\addplot[forget plot, color=white!15!black] table[row sep=crcr] {%
0.506666666666667	0\\
6.49333333333333	0\\
};
\addlegendentry{TD: Theory UB}

\end{axis}

\begin{axis}[%
width=2.8in,
height=2.6in,
at={(0in,0in)},
scale only axis,
xmin=0,
xmax=1,
ymin=0,
ymax=1,
axis line style={draw=none},
ticks=none,
axis x line*=bottom,
axis y line*=left,
legend style={legend cell align=left, align=left, draw=white!15!black}
]
\end{axis}
\end{tikzpicture}
    \caption{Validation of Theoretical Risk bounds for all instances listed in Table~\ref{tab:synthetic-scenarios}. Here TI and TD represent the results for the time-independent and time-dependent algorithms, respectively. The red and the green bars represent the proportion of instances for which either the competitive ratio constraint or any of the capacity constraints is violated, while the other bars represent the theoretical lower bounds (LB) and upper bounds (UB) on the level of risk as calculated through the number of support constraints for the two algorithms on each of the instances. The dark blue bar representing the theoretical lower bound on the risk for each of the instances and the level of risk on the fifth instance of the time-independent algorithm is infinitesimally close to 0 and so is not visible. } \vspace{-10pt}
    \label{fig:theory-bounds}
\end{figure}

\paragraph{Performance Comparison between Algorithms.} We now investigate the performance of the two data-driven algorithms and that of the greedy algorithm. To provide an equal footing to compare the data-driven and greedy algorithms, we implement discrete allocations corresponding to the learned allocation probabilities from the optimal offline objectives in both the time-dependent and time-independent cases. In particular, for each user we generate a uniform random number between zero and one and assign a quantile range to each arc corresponding to the optimal allocation probabilities based on the value-of-time (and time of arrival) of that user. Then, we implement discrete allocations by assigning each user to the arc for which the uniform random number lies within its quantile range. As an example, suppose we have one user and two arcs, each with allocation probability $0.5$. Then, we can denote the quantile range for the first arc as $[0, 0.5)$ and that of the second arc as $[0.5, 1]$. If the uniform random number is $0.6$, then the user is assigned to the second arc.

To present the results, we denote the time-independent and time-dependent data-driven algorithms, which are based on the scenario approach as ``TI'' and ``TD'' respectively, while we denote the greedy algorithm as ``Greedy''. Figure~\ref{fig:vals_updated} depicts histograms that represent the distribution of total travel cost of the online solution and that of the optimal offline solution on the test instances. All experiments presented in this section were trained on 100 instances and their performance was evaluated on 100 test instances. In the following, we provide a detailed analysis of the results in Figure~\ref{fig:vals_updated}. 

For each of the six scenarios, Figure~\ref{fig:vals_updated} depicts that the ratio between the online and optimal offline solutions of the greedy algorithm has a distribution that is to the right of the corresponding distributions of the data-driven algorithms. Accordingly, both data-driven approaches outperform the greedy algorithm for each of the tested scenarios, since the greedy algorithm tends to achieve generally higher costs. The data-driven algorithms achieve lower total costs since they take into account the impact of routing a given user on the total travel cost. On the other hand, the greedy algorithm only takes local information into account and may allocate some users with the highest value-of-time to arc three, i.e., the highest cost arc, when both the optimal offline solution and the data-driven algorithm allocated those users to lower cost arcs.

\newcommand{\sfwidth}{0.5\columnwidth}
\newcommand{\siwidth}{0.99\linewidth}
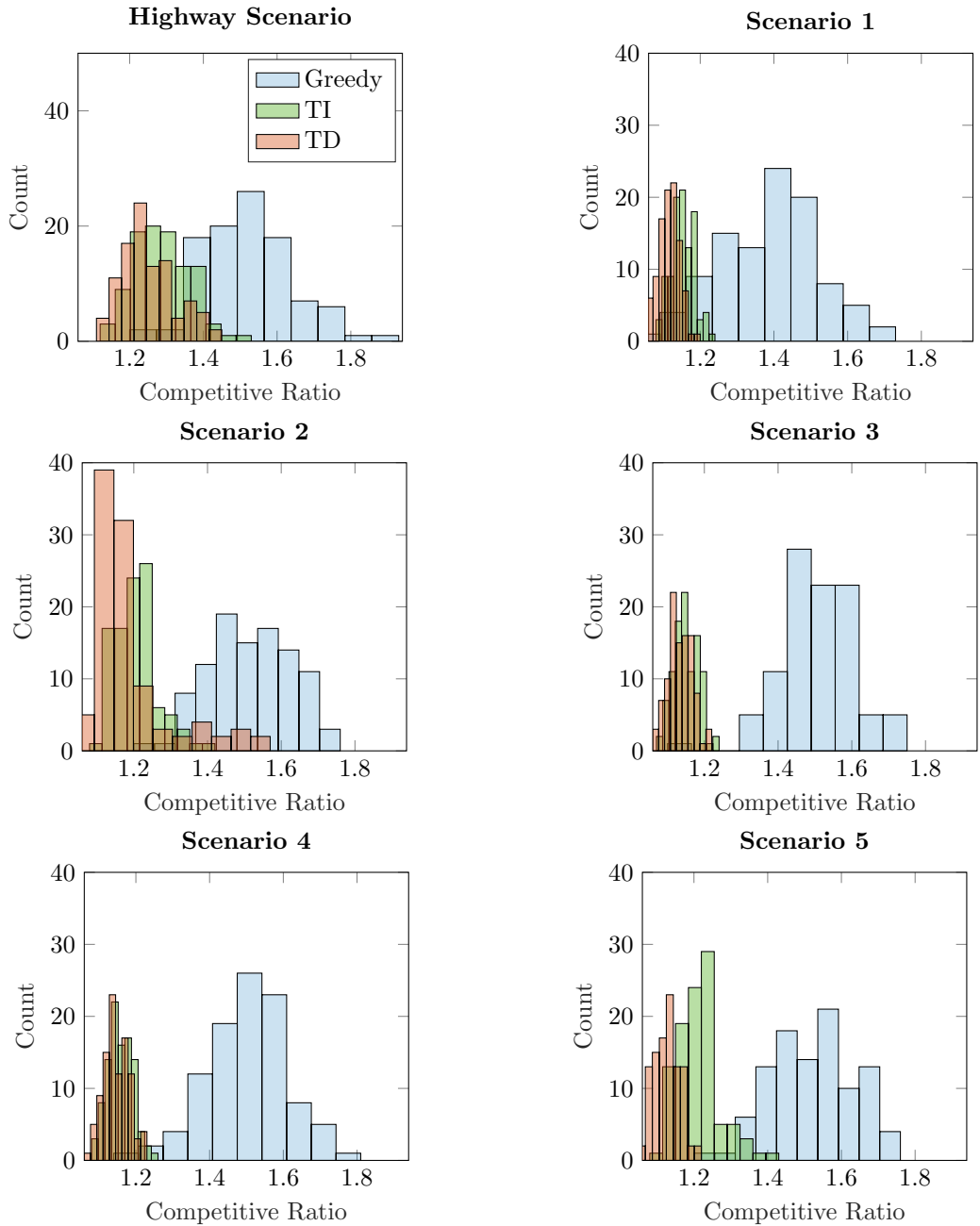
\begin{figure}
    \centering
    \begin{subfigure}[b]{0.48\columnwidth}
        \centering
%
%
\definecolor{mycolor1}{rgb}{0.00000,0.44700,0.74100}%
\definecolor{mycolor2}{rgb}{0.85000,0.32500,0.09800}%
\definecolor{mycolor3}{rgb}{0.3200,0.69400,0.12500}
\begin{tikzpicture}

\begin{axis}[%
width=1.8in,
height=1.6in,
at={(0in,0in)},
scale only axis,
xmin=1.06,
xmax=1.94,
xlabel style={font=\color{white!15!black}},
xlabel={Competitive Ratio},
ymin=0,
ymax=50,
ylabel style={font=\color{white!15!black}},
ylabel={Count},
axis background/.style={fill=white},
title style={font=\bfseries},
title={Highway Scenario},
legend style={legend cell align=left, align=left, draw=white!15!black}
]
\addplot[ybar interval, fill=mycolor1, fill opacity=0.2, draw=black, area legend] table[row sep=crcr] {%
x	y\\
1.2	2\\
1.273	2\\
1.346	18\\
1.419	20\\
1.492	26\\
1.565	18\\
1.638	7\\
1.711	6\\
1.784	1\\
1.857	1\\
1.93	1\\
};
\addlegendentry{Greedy}

\addplot[ybar interval, fill=mycolor3, fill opacity=0.4, draw=black, area legend] table[row sep=crcr] {%
x	y\\
1.12	3\\
1.161	9\\
1.202	19\\
1.243	20\\
1.284	19\\
1.325	13\\
1.366	13\\
1.407	3\\
1.448	1\\
1.489	1\\
1.53	1\\
};
\addlegendentry{TI}

\addplot[ybar interval, fill=mycolor2, fill opacity=0.4, draw=black, area legend] table[row sep=crcr] {%
x	y\\
1.11	4\\
1.144	11\\
1.178	17\\
1.212	24\\
1.246	13\\
1.28	14\\
1.314	4\\
1.348	7\\
1.382	5\\
1.416	2\\
1.45	2\\
};
\addlegendentry{TD}

\end{axis}
\end{tikzpicture}%
    \end{subfigure}
    \begin{subfigure}[b]{0.48\columnwidth}
        \centering
%
%
\definecolor{mycolor1}{rgb}{0.00000,0.44700,0.74100}%
\definecolor{mycolor2}{rgb}{0.85000,0.32500,0.09800}%
\definecolor{mycolor3}{rgb}{0.3200,0.69400,0.12500}
\begin{tikzpicture}

\begin{axis}[%
width=1.8in,
height=1.6in,
at={(0in,0in)},
scale only axis,
xmin=1.06,
xmax=1.94,
xlabel style={font=\color{white!15!black}},
xlabel={Competitive Ratio},
ymin=0,
ymax=40,
ylabel style={font=\color{white!15!black}},
ylabel={Count},
title style={font=\bfseries},
title={Scenario 1},
axis background/.style={fill=white},
title style={font=\bfseries},
legend style={legend cell align=left, align=left, draw=white!15!black}
]
\addplot[ybar interval, fill=mycolor1, fill opacity=0.2, draw=black, area legend] table[row sep=crcr] {%
x	y\\
1.02	1\\
1.091	4\\
1.162	9\\
1.233	15\\
1.304	13\\
1.375	24\\
1.446	20\\
1.517	8\\
1.588	5\\
1.659	2\\
1.73	2\\
};

\addplot[ybar interval, fill=mycolor3, fill opacity=0.4, draw=black, area legend] table[row sep=crcr] {%
x	y\\
1.08	3\\
1.096	9\\
1.112	9\\
1.128	20\\
1.144	21\\
1.16	13\\
1.176	18\\
1.192	3\\
1.208	4\\
1.224	1\\
1.24	1\\
};

\addplot[ybar interval, fill=mycolor2, fill opacity=0.4, draw=black, area legend] table[row sep=crcr] {%
x	y\\
1.04	3\\
1.056	6\\
1.072	9\\
1.088	17\\
1.104	21\\
1.12	22\\
1.136	14\\
1.152	7\\
1.168	1\\
1.184	1\\
1.2	1\\
};

\end{axis}
\end{tikzpicture}%
    \end{subfigure}\par\medskip
    \vspace{-20pt}
    \hspace{0pt}
    \begin{subfigure}[b]{0.48\columnwidth}
        \centering
%
%
\definecolor{mycolor1}{rgb}{0.00000,0.44700,0.74100}%
\definecolor{mycolor2}{rgb}{0.85000,0.32500,0.09800}%
\definecolor{mycolor3}{rgb}{0.3200,0.69400,0.12500}
\begin{tikzpicture}

\begin{axis}[%
width=1.8in,
height=1.6in,
at={(0in,0in)},
scale only axis,
xmin=1.06,
xmax=1.94,
xlabel style={font=\color{white!15!black}},
xlabel={Competitive Ratio},
ymin=0,
ymax=40,
ylabel style={font=\color{white!15!black}},
ylabel={Count},
title style={font=\bfseries},
title={Scenario 2},
axis background/.style={fill=white},
title style={font=\bfseries},
legend style={legend cell align=left, align=left, draw=white!15!black}
]
\addplot[ybar interval, fill=mycolor1, fill opacity=0.2, draw=black, area legend] table[row sep=crcr] {%
x	y\\
1.2	1\\
1.256	1\\
1.312	8\\
1.368	12\\
1.424	19\\
1.48	15\\
1.536	17\\
1.592	14\\
1.648	11\\
1.704	3\\
1.76	3\\
};

\addplot[ybar interval, fill=mycolor3, fill opacity=0.4, draw=black, area legend] table[row sep=crcr] {%
x	y\\
1.08	1\\
1.114	17\\
1.148	17\\
1.182	24\\
1.216	26\\
1.25	6\\
1.284	5\\
1.318	3\\
1.352	1\\
1.386	1\\
1.42	1\\
};

\addplot[ybar interval, fill=mycolor2, fill opacity=0.4, draw=black, area legend] table[row sep=crcr] {%
x	y\\
1.04	5\\
1.093	39\\
1.146	32\\
1.199	9\\
1.252	3\\
1.305	2\\
1.358	4\\
1.411	2\\
1.464	3\\
1.517	2\\
1.57	2\\
};

\end{axis}
\end{tikzpicture}%
    \end{subfigure}
    \begin{subfigure}[b]{0.48\columnwidth}
        \centering
%
%
\definecolor{mycolor1}{rgb}{0.00000,0.44700,0.74100}%
\definecolor{mycolor2}{rgb}{0.85000,0.32500,0.09800}%
\definecolor{mycolor3}{rgb}{0.3200,0.69400,0.12500}
\begin{tikzpicture}

\begin{axis}[%
width=1.8in,
height=1.6in,
at={(0in,0in)},
scale only axis,
xmin=1.06,
xmax=1.94,
xlabel style={font=\color{white!15!black}},
xlabel={Competitive Ratio},
ymin=0,
ymax=40,
ylabel style={font=\color{white!15!black}},
ylabel={Count},
title style={font=\bfseries},
title={Scenario 3},
axis background/.style={fill=white},
title style={font=\bfseries},
legend style={legend cell align=left, align=left, draw=white!15!black}
]
\addplot[ybar interval, fill=mycolor1, fill opacity=0.2, draw=black, area legend] table[row sep=crcr] {%
x	y\\
1.1	1\\
1.165	0\\
1.23	0\\
1.295	5\\
1.36	11\\
1.425	28\\
1.49	23\\
1.555	23\\
1.62	5\\
1.685	5\\
1.75	5\\
};

\addplot[ybar interval, fill=mycolor3, fill opacity=0.4, draw=black, area legend] table[row sep=crcr] {%
x	y\\
1.07	2\\
1.087	7\\
1.104	11\\
1.121	18\\
1.138	22\\
1.155	11\\
1.172	16\\
1.189	11\\
1.206	1\\
1.223	2\\
1.24	2\\
};

\addplot[ybar interval, fill=mycolor2, fill opacity=0.4, draw=black, area legend] table[row sep=crcr] {%
x	y\\
1.06	3\\
1.076	7\\
1.092	10\\
1.108	22\\
1.124	15\\
1.14	16\\
1.156	16\\
1.172	8\\
1.188	1\\
1.204	3\\
1.22	3\\
};

\end{axis}
\end{tikzpicture}%
    \end{subfigure}\par\medskip 
    \vspace{-20pt}
    \hspace{0pt}
    \begin{subfigure}[b]{0.48\columnwidth}
        \centering
%
%
\definecolor{mycolor1}{rgb}{0.00000,0.44700,0.74100}%
\definecolor{mycolor2}{rgb}{0.85000,0.32500,0.09800}%
\definecolor{mycolor3}{rgb}{0.3200,0.69400,0.12500}
\begin{tikzpicture}

\begin{axis}[%
width=1.8in,
height=1.6in,
at={(0in,0in)},
scale only axis,
xmin=1.06,
xmax=1.94,
xlabel style={font=\color{white!15!black}},
xlabel={Competitive Ratio},
ymin=0,
ymax=40,
ylabel style={font=\color{white!15!black}},
ylabel={Count},
title style={font=\bfseries},
title={Scenario 4},
axis background/.style={fill=white},
title style={font=\bfseries},
legend style={legend cell align=left, align=left, draw=white!15!black}
]
\addplot[ybar interval, fill=mycolor1, fill opacity=0.2, draw=black, area legend] table[row sep=crcr] {%
x	y\\
1.14	1\\
1.207	2\\
1.274	4\\
1.341	12\\
1.408	19\\
1.475	26\\
1.542	23\\
1.609	8\\
1.676	5\\
1.743	1\\
1.81	1\\
};

\addplot[ybar interval, fill=mycolor3, fill opacity=0.4, draw=black, area legend] table[row sep=crcr] {%
x	y\\
1.08	3\\
1.098	8\\
1.116	14\\
1.134	22\\
1.152	16\\
1.17	17\\
1.188	14\\
1.206	4\\
1.224	2\\
1.242	1\\
1.26	1\\
};

\addplot[ybar interval, fill=mycolor2, fill opacity=0.4, draw=black, area legend] table[row sep=crcr] {%
x	y\\
1.06	1\\
1.077	5\\
1.094	9\\
1.111	15\\
1.128	23\\
1.145	12\\
1.162	17\\
1.179	12\\
1.196	3\\
1.213	4\\
1.23	4\\
};

\end{axis}
\end{tikzpicture}%
    \end{subfigure}\hspace{-5pt}
    \begin{subfigure}[b]{0.48\columnwidth}
        \centering
%
%
\definecolor{mycolor1}{rgb}{0.00000,0.44700,0.74100}%
\definecolor{mycolor2}{rgb}{0.85000,0.32500,0.09800}%
\definecolor{mycolor3}{rgb}{0.3200,0.69400,0.12500}
\begin{tikzpicture}

\begin{axis}[%
width=1.8in,
height=1.6in,
at={(0in,0in)},
scale only axis,
xmin=1.06,
xmax=1.94,
xlabel style={font=\color{white!15!black}},
xlabel={Competitive Ratio},
ymin=0,
ymax=40,
ylabel style={font=\color{white!15!black}},
ylabel={Count},
title style={font=\bfseries},
title={Scenario 5},
axis background/.style={fill=white},
title style={font=\bfseries},
legend style={legend cell align=left, align=left, draw=white!15!black}
]
\addplot[ybar interval, fill=mycolor1, fill opacity=0.2, draw=black, area legend] table[row sep=crcr] {%
x	y\\
1.2	1\\
1.256	1\\
1.312	6\\
1.368	13\\
1.424	18\\
1.48	14\\
1.536	21\\
1.592	10\\
1.648	13\\
1.704	4\\
1.76	4\\
};

\addplot[ybar interval, fill=mycolor3, fill opacity=0.4, draw=black, area legend] table[row sep=crcr] {%
x	y\\
1.08	1\\
1.115	13\\
1.15	19\\
1.185	24\\
1.22	29\\
1.255	5\\
1.29	5\\
1.325	3\\
1.36	1\\
1.395	1\\
1.43	1\\
};

\addplot[ybar interval, fill=mycolor2, fill opacity=0.4, draw=black, area legend] table[row sep=crcr] {%
x	y\\
1.03	1\\
1.049	2\\
1.068	13\\
1.087	15\\
1.106	17\\
1.125	23\\
1.144	13\\
1.163	13\\
1.182	2\\
1.201	2\\
1.22	2\\
};

\end{axis}
\end{tikzpicture}%
    \end{subfigure}
    \vspace{-15pt}
    \caption{{Distribution of the ratio of the total travel cost of the online solution of the (i) greedy, (ii) time-independent (TI), and (iii) time-dependent (TD) algorithms to that of the optimal offline solution for the six scenarios on the test instances. Here competitive ratio is used to represent the ratio of the total travel cost between the online and offline solutions for each instance.}}
    \label{fig:vals_updated}
\end{figure}


Next, we compare the performance of the two data-driven algorithms on the test instances. To this end, first note from Figure~\ref{fig:vals_updated} that the distribution of the ratio between the online and optimal offline solutions of the time-independent data driven algorithm is generally to the right of that of the time-dependent data-driven algorithm. This indicates the superior performance of the time-dependent data-driven algorithm, which can be expected since it allocated users with different probabilities as the user arrival rate changes, which is not the case with the time-independent algorithm. The slightly higher ratio of the right tail of the distribution between the online and optimal offline solutions of the time-dependent algorithm in Scenario 2 (cf. Figure~\ref{fig:vals_updated}) is attributable to the higher level of risk of the time-dependent algorithm, as was observed in the earlier theoretical validation of the risk bounds. However, other than in Scenario 2 even the right tail of the distribution of the ratio between the online and optimal offline solutions of the time-dependent data driven algorithm is lower than that of the time-independent algorithm. The aforementioned observation holds despite the fact that the generalization guarantee for the time-dependent algorithm is worse than that of the time-independent algorithm, since the risk of the former is lower than that of the latter on the test instances. 


To summarize, Figure~\ref{fig:vals_updated} indicates that the time-dependent algorithm achieves the best performance while the greedy algorithm achieves the highest ratios between the online and optimal offline solutions, thereby highlighting the value of prior information in online algorithm design. Furthermore, the results indicate that despite a superior probabilistic generalization guarantee, i.e., a lower probability of constraint violation on unseen sequences, for the time-independent algorithm, the right tail of the distribution of the ratios between the online and optimal offline solutions of the time-dependent algorithm is either slightly higher or strictly lower than that of the time-independent algorithm. The superior numerical performance of the time-dependent algorithm despite the better generalization guarantee of the time-independent one thus points towards the importance of the ability to adjust allocations based on changes in user arrival rates.

\section{Conclusions and Future Work} \label{conclusions}
This paper studied online traffic routing from a competitive analysis as well as a data-driven perspective. We studied both the adversarial and stochastic user arrival settings for the \acrshort{otr} problem. For the adversarial model of user arrivals, we showed that the greedy algorithm achieves a competitive ratio of one for the two-arc \acrshort{otr-i} problem. We then established that in extended variants of the two-arc \acrshort{otr-i} problem, the worst-case ratio between the total travel cost of the online solution of any feasible deterministic algorithm and that of the optimal offline solution is unbounded. In the stochastic setting, we presented both time-independent and time-dependent data-driven algorithms, which leverage the scenario approach, and established bounds on the the probability of constraint violation of the optimal parametrizations computed through these algorithms on unseen test instances. Finally, we presented numerical experiments based on an application case of the San Francisco Bay Area and compared the data-driven and greedy algorithms' performance. In particular, our results indicate that both the time-independent and time-dependent data-driven algorithms outperform the greedy algorithm, highlighting the value of prior information in online algorithm design. Furthermore, we observe that the time-dependent algorithm generally has better performance as compared to the time-independent one despite a superior probabilistic generalization guarantee of the time-independent algorithm.



There are various directions for future research. 
First, it would be worthwhile to investigate the tractability and generalizability of the data-driven approach in road networks beyond the parallel network setting considered in this work. Furthermore, it would be interesting to investigate any theoretical underpinnings that explain the superior numerical performance of the time-dependent algorithm despite a superior probabilistic generalization guarantee of the time-independent algorithm.
Finally, it would be worthwhile to extend the probabilistic generalization guarantee of the data-driven algorithm that makes fractional allocations to the setting of discrete allocations.


\section*{Acknowledgements}
This work was supported by the National Science Foundation (NSF) Award 1830554.


\bibliographystyle{unsrtnat}
\bibliography{main}


%
%
%


\appendix
\section{Proofs}
For notational convenience, we drop the superscript $\mathcal{I}$ in $N_{a, \mathbf{A}}^{\mathcal{I}}$, the number of users in instance $\mathcal{I}$ that are allocated on arc $a$ by Algorithm $\mathbf{A}$, and note that the proofs presented in this section hold for all instances $\mathcal{I} \in \Omega$.

\subsection{Proof of Lemma~\ref{lem:lemm1}} \label{proof-lem-1}

We can evaluate the total cost $C_{\mathbf{A}}$ of algorithm $\mathbf{A}$ as:

\begin{align}
    C_{\mathbf{A}} = \sum_{a = 1}^{M} t_a N_{a, \mathbf{A}}
\end{align}
Note that this is equivalent to the objective function~\eqref{eq:OPT-integer-Obj}, with $\theta_i = 1$ for all agents~$i$. 

We now consider a \textit{feasible} algorithm $\mathbf{A}'$ that satisfies Constraints~\eqref{eq:prob-constraint}-\eqref{eq:capacity-constraints}. We first note that the number of people that must be allocated under $\mathbf{A}$ and $\mathbf{A}'$ must be equal: $\sum_{a = 1}^{M} N_{a, \mathbf{A}} = \sum_{a = 1}^{M} N_{a, \mathbf{A}'}$. To prove the lemma we first consider a simplified case with two arcs, which we then extend to the more general case with $M$ arcs.

\paragraph{Two arc Case:} For algorithm $\mathbf{A}$ we denote the total number of people allocated to arc 1 as $N_{1, \mathbf{A}}$ and the number allocated to arc 2 as $N_{2, \mathbf{A}}$. Then the total number of people in the system is $N = N_{1, \mathbf{A}}+N_{2, \mathbf{A}}$. The total cost of algorithm $\mathbf{A}$ can then be written as:
\[ C_{\mathbf{A}} = t_1 N_{1, \mathbf{A}} + t_2 N_{2, \mathbf{A}} = t_1 N_{1, \mathbf{A}'} + t_1(N_{1, \mathbf{A}}-N_{1, \mathbf{A}'}) + t_2 N_{2, \mathbf{A}}\]
Using the property of algorithm $\mathbf{A}$ that $N_{1, \mathbf{A}} \geq N_{1, \Bar{\mathbf{A}}}$ for any feasible $\Bar{\mathbf{A}}$ it holds for $\mathbf{A}'$ that $N_{1, \mathbf{A}} - N_{1, \mathbf{A}'} \geq 0$, and
\[ C_\mathbf{A} \leq t_1 N_{1, \mathbf{A}'} + t_2(N_{2, \mathbf{A}} + N_{1, \mathbf{A}}-N_{1, \mathbf{A}'})\]
follows. Since both $\mathbf{A}$ and $\mathbf{A}'$ allocate the same number of users, we have $N_{1, \mathbf{A}}+N_{2, \mathbf{A}} = N_{1, \mathbf{A}'}+N_{2, \mathbf{A}'}$, which implies that:
\[  C_\mathbf{A} \leq t_1 N_{1, \mathbf{A}'} + t_2 N_{2, \mathbf{A}'} = C_{\mathbf{A}'}\]
Thus, we have established for the two-arc case that $C_\mathbf{A} \leq C_{\mathbf{A}'}$ for all \textit{feasible} algorithms $\textbf{A}'$.

\paragraph{Multiple arc Case:} We now generalize this proof strategy to a parallel road network with $M$ arcs. We first observe from the definition of an algorithm's total cost that:
\[ C_\mathbf{A} = t_1 N_{1, \mathbf{A}} + t_2 N_{2, \mathbf{A}} + ... + t_M N_{M, \mathbf{A}}\]
Next, for any \textit{feasible} algorithm $\mathbf{A}'$ we can rewrite this expression as:
\[ C_\mathbf{A} = t_1 N_{1, \mathbf{A}'} + t_1(N_{1, \mathbf{A}}-N_{1, \mathbf{A}'}) + t_2 N_{2, \mathbf{A}'} + t_2 (N_{2, \mathbf{A}} - N_{2, \mathbf{A}}) + t_3 N_{3, \mathbf{A}} + ... + t_M N_{M, \mathbf{A}}\]
Then, observing that $N_{1, \mathbf{A}} \geq N_{1, \mathbf{A}'}$, we obtain:
\[ C_\mathbf{A} \leq t_1 N_{1, \mathbf{A}'} + t_2 N_{2, \mathbf{A}'} + t_2(N_{1, \mathbf{A}}-N_{1, \mathbf{A}'} + N_{2, \mathbf{A}} - N_{2, \mathbf{A}}) + t_3 N_{3, \mathbf{A}} ... + t_M N_{M, \mathbf{A}'}\]
Now, we can repeat this process and use that $\sum_{a = 1}^{k} N_{a, \mathbf{A}} \geq \sum_{a = 1}^{k} N_{a, \mathbf{A}'}$ $\forall k \in [M]$ to get a chain of inequalities, which gives:
\[ C_\mathbf{A} \leq t_1 N_{1, \mathbf{A}'} + t_2 N_{2, \mathbf{A}'} + ... + t_M N_{M, \mathbf{A}'} + t_M(N_{1, \mathbf{A}}-N_{1, \mathbf{A}'} + N_{2, \mathbf{A}} - N_{2, \mathbf{A}} + ... + N_{M, \mathbf{A}} - N_{M, \mathbf{A}'})\]
Finally, observing that $\sum_{a =1}^{M} N_{a, \mathbf{A}} = \sum_{a =1}^{M} N_{a, \mathbf{A}'}$, we observe:
\[ C_\mathbf{A} \leq t_1 N_{1, \mathbf{A}'} + t_2 N_{2, \mathbf{A}'} + ... + t_M N_{M, \mathbf{A}'} = C_{\mathbf{A}'}\]
This his establishes that for the $M$-arc case that $C_\mathbf{A} \leq C_{\mathbf{A}'}$ for all \textit{feasible} algorithms $\textbf{A}'$. \qed

\subsection{Proof of Lemma~\ref{lem:greed-opt}} \label{proof-greed-opt}
We first note that Algorithm~\ref{alg:greedy}, denoted as $\mathbf{A}$, certainly satisfies Constraints~\eqref{eq:prob-constraint}-\eqref{eq:capacity-constraints}. This is because Algorithm~\ref{alg:greedy} routes each user on exactly one arc, such that the arc is used below capacity at the time of arrival of each user $i$. For the remainder of this proof we denote arc one as $a_1$ and arc two as $a_2$, where $t_1 < t_2$. Note that if $t_1 = t_2$ then any \textit{feasible} algorithm is optimal since all customers incur the same travel time.

To prove our claim, we proceed by induction on the number of users that are allocated by Algorithm~\ref{alg:greedy} on $a_2$. Note that if Algorithm~\ref{alg:greedy} allocates all users to $a_1$ then it is clearly optimal. Thus, for the base case suppose that Algorithm~\ref{alg:greedy} routes one user on the second arc. We will now show that if Algorithm~\ref{alg:greedy} allocates the $q$'th arriving user on $a_2$ then any other \textit{feasible} algorithm $\mathbf{A}'$ must allocate at least one of the first $q$ users on $a_2$. To prove this claim, we proceed by contradiction. In particular, we assume that there is some \textit{feasible} algorithm $\mathbf{A}'$ that allocates none of the first $q$ users to $a_2$. This implies that the first $q-1$ users must be allocated by algorithm $\mathbf{A}'$ on $a_1$. Note that Algorithm~\ref{alg:greedy} also allocates the first $q-1$ users on $a_1$. Next, since Algorithm~\ref{alg:greedy} allocated the $q$'th arriving user to $a_2$ it must hold that $a_1$ is at capacity at the time user $q$ arrives; otherwise Algorithm~\ref{alg:greedy} would have allocated user $q$ on $a_1$. This establishes our claim that algorithm $\mathbf{A}'$ must allocate at least one of the first $q$ users to $a_2$. Finally, as Algorithm~\ref{alg:greedy} only allocates one user on $a_2$, it follows that $N_{1, \mathbf{A}} \geq N_{1, \mathbf{A}'}$, as any other \textit{feasible} algorithm also allocates at least one user to $a_2$. This establishes the base case.

For the inductive step, we assume that if $k$ users are allocated by Algorithm~\ref{alg:greedy} on $a_2$, then any other feasible algorithm $\mathbf{A}'$ must also allocate at least $k$ users on $a_2$. As established for the base case, we further assume that for each of the $k$ users $q_1, ..., q_k$ allocated on $a_2$ by Algorithm~\ref{alg:greedy}, there is a corresponding user $\Tilde{q}_l \leq q_l$ for each $l \in [k]$ that is allocated to $a_2$ by any other \textit{feasible} algorithm. Now, suppose that Algorithm~\ref{alg:greedy} allocates $k+1$ users on $a_2$. In this case, we first consider the subset of the users in the system up until the $q_k$'th user is routed by Algorithm~\ref{alg:greedy} on $a_2$. For this subset of users, we observe from the inductive assumption that any \textit{feasible} algorithm $\mathbf{A}'$ must route at least $k$ users $\Tilde{q}_l$ for $l \in [k]$ on $a_2$ such that $\Tilde{q}_l \leq q_l$ for each $l \in [k]$.

Given the above observations, we now prove the inductive step by contradiction. In particular, we suppose that there exists a \textit{feasible} algorithm $\mathbf{A}'$ that only allocates $k$ users on $a_2$. This implies that every user after the $\Tilde{q}_k$'th user is allocated on $a_1$ by algorithm $\mathbf{A}'$. Note that this allocation is exactly the same as the allocation of Algorithm~\ref{alg:greedy} for users between but not including $q_k$ and $q_{k+1}$ on~$a_1$. Next, since Algorithm~\ref{alg:greedy} allocated the $q_{k+1}$'th arriving user to $a_2$ it must hold that $a_1$ is at capacity at the time user $q_{k+1}$ arrives. Finally, since $\Tilde{q}_k \leq q_k$ it follows that $\mathbf{A}'$ must allocate at least one of the users between $\Tilde{q}_{k}$ and $q_{k+1}$ on $a_2$ to remain \textit{feasible}, i.e., avoid violating the capacity constraint. Thus, algorithm $\mathbf{A}'$ allocates at least $k+1$ users on $a_2$, which establishes that $N_{1, \mathbf{A}} \geq N_{1, \mathbf{A}'}$ when $k+1$ users are allocated by Algorithm~\ref{alg:greedy} on $a_2$. This establishes the inductive step. \qed

\subsection{Proof of Lemma~\ref{lem:M-greater-than-3}} \label{proof-M-greater-than-3}
To prove this lemma, we consider a three arc network where the cost of arc three is arbitrary and note that the proof can be generalized to any $M$ arc network for $M \geq 3$. We denote the three arcs in the network as $a_1$, $a_2$, and $a_3$ respectively.

We will show that an adversary can construct input sequences such that any \textit{feasible} deterministic algorithm will allocate users on arc $a_3$, while the optimal allocation is to assign users to the first two arcs. Since an adversary can choose the cost of $a_3$ to be arbitrarily large, this establishes that the ratio between the total travel cost of the online solution of any deterministic algorithm and that of the optimal offline solution is unbounded for the $M \geq 3$ arc \acrshort{otr-i} problem.

We first fix a three arc parallel network, where the arc capacities are $c_1 = 1, c_2 = 1, c_3 = 10$, and the arc costs are $t_1 = 5, t_2 = 10.01$, while $t_3$ is the cost of arc three that is strictly greater than $t_2$. Note that in any such three arc network a \textit{feasible} deterministic algorithm $\mathbf{A}$ must allocate the first arriving user on one of the three arcs. We now analyze each of these cases in turn.

\paragraph{Case (i):} Suppose $\mathbf{A}$ allocates the first arriving user to $a_3$. Then an adversary can construct an input sequence such that only one user arrives in the system. Since the optimal allocation is to assign this user on $a_1$, the ratio between the cost of $\mathbf{A}$ and that of the optimal allocation is $\frac{t_3}{t_1}$.

\paragraph{Case (ii):} Suppose $\mathbf{A}$ allocates the first arriving user to $a_2$. Then consider the following input sequence, where user one arrives at time $0$, user two arrives at time $10$, and user three arrives at time $10.001$. Note that an optimal allocation for this input sequence is to allocate users one and two on $a_1$, and user three on $a_2$. That is, the optimal allocation does not allocate a user on $a_3$. However, since $\mathbf{A}$ allocates user one on $a_2$, it must be that either one of users two or three is allocated on $a_3$. To see this, we note that at the time both users two and three arrive, user one is still on $a_2$ since the travel time of $a_2$ is strictly greater than the arrival time of user three. Since the capacity of both $a_1$ and $a_2$ are one, it must be that $\mathbf{A}$ allocates at least one of the last two users on $a_3$. Thus, we have that the ratio of the cost of $\mathbf{A}$ to that of the optimal allocation is at least $\frac{t_1 + t_2 + t_3}{2 t_1 + t_2}$.

\paragraph{Case (iii):} Suppose $\mathbf{A}$ allocates the first arriving user to $a_1$. Then consider the following input sequence, where user one arrives at time $0$, user two arrives at time $0.15$, user three arrives at time $5.2$, and user four arrives at time $10.1$. Note that an optimal allocation for this input sequence is to allocate user one on $a_2$, users two and three on $a_1$, and user four on $a_2$. That is, the optimal allocation does not allocate a user on $a_3$. However, since $\mathbf{A}$ allocates user one on $a_1$, it must be that at least one of the last three users is allocated on $a_3$. To see this, we note that $\mathbf{A}$ cannot allocate user two to $a_1$ since user one occupies $a_1$ between the time period $[0, 5]$ since the cost of $a_1$ is $5$. Thus, $\mathbf{A}$ must either allocate user two to $a_2$ or $a_3$. If $\mathbf{A}$ allocates user two to $a_3$ this will lead to an unbounded ratio of the total travel cost of the online solution to that of the optimal offline solution since an adversary can choose the travel time of $a_3$ to be arbitrarily large. Thus, we consider the case when $\mathbf{A}$ allocates user two to $a_2$. 

Now, we observe that one of the users three or four must be allocated to $a_3$ since both users' arrival time is before the departure of user two from $a_2$. Since the capacity of both $a_1$ and $a_2$ are one, $\mathbf{A}$ must allocate at least one of the last two users on $a_3$. Thus, we have that the ratio of the cost of $\mathbf{A}$ to that of the optimal allocation is at least $\frac{2t_1 + t_2 + t_3}{2 t_1 + 2 t_2}$.

Note that since $t_1 \leq t_2 \leq t_3$, it follows that $\frac{t_3}{t_1} \geq \frac{t_1 + t_2 + t_3}{2 t_1 + t_2} \geq \frac{2t_1 + t_2 + t_3}{2 t_1 + 2 t_2}$. Thus, the ratio of the total travel cost of the online solution of any deterministic algorithm to that of the optimal offline solution is at least $\frac{2t_1 + t_2 + t_3}{2 t_1 + 2 t_2} = 1 + \frac{t_3-t_2}{2 t_1 + 2 t_2}$.

Finally, we note that this ratio is unbounded since an adversary can choose the travel time $t_3$ to be arbitrarily large for fixed values of $t_1$ and $t_2$. \qed


\subsection{Proof of Lemma~\ref{lem:non-identical-OTR}} \label{proof-non-identical-OTR}
To prove this theorem, we restrict our attention to a setting with two user values of time and note that the proof can be generalized for more than two values of time of users. This is because the ratio between the online and the optimal offline solution only depends on the largest and smallest user values of time.

\paragraph{Notation:} We first introduce the notation to define the cost of routing users in the network. Let $N_{a, \mathbf{A}, k}$ and $N_{a, OPT, k}$ be the number of users with a value-of-time $k \in \{ \theta', \theta''\}$ that are routed on arc $a$ by algorithm $\mathbf{A}$ and the optimal offline solution respectively. Then for a two arc parallel network, the total cost for any feasible algorithm that satisfies Constraints~\eqref{eq:prob-constraint}-\eqref{eq:capacity-constraints} is given by:
\begin{align}
    C_{\mathbf{A}} = \theta'' N_{1, \mathbf{A}, \theta''} t_1 + \theta' N_{1, \mathbf{A}, \theta'} t_1 + \theta'' N_{2, \mathbf{A}, \theta''} t_2 + \theta' N_{2, \mathbf{A}, \theta'} t_2
\end{align}
In this setting, the competitive ratio for any algorithm $\mathbf{A}$ on an input sequence $\mathcal{I}$ can be denoted as follows:
\begin{align}
    CR(\mathbf{A}) = \max_{\mathcal{I} \in \Omega} \frac{\theta'' N_{1, \mathbf{A}, \theta''} t_1 + \theta' N_{1, \mathbf{A}, \theta'} t_1 + \theta'' N_{2, \mathbf{A}, \theta''} t_2 + \theta' N_{2, \mathbf{A}, \theta'} t_2}{\theta'' N_{1, OPT, \theta''} t_1 + \theta' N_{1, OPT, \theta'} t_1 + \theta'' N_{2, OPT, \theta''} t_2 + \theta' N_{2, OPT, \theta'} t_2}
\end{align}
Here it must hold that: $ N_{1, OPT, \theta''} + N_{2, OPT, \theta''} = N_{\theta''} = N_{1, \mathbf{A}, \theta''} + N_{2, \mathbf{A}, \theta''}$ and $N_{1, OPT, \theta'} + N_{2, OPT, \theta'} = N_{\theta'} =  N_{1, \mathbf{A}, \theta'} + N_{2, \mathbf{A}, \theta'}$, since the number of users with the same value-of-time for the input sequence $\mathcal{I}$ is fixed.

\paragraph{Lower Bound on Worst-Case $\frac{ALG(\mathcal{I},\mathbf{A})}{OPT(\mathcal{I})}$ Ratio of Deterministic Algorithms:} To show that no deterministic algorithm can perform better than the ratio of $\frac{ALG(\mathcal{I},\mathbf{A})}{OPT(\mathcal{I})} = \frac{\theta'' t_2 + \theta' t_1}{\theta'' t_1 + \theta' t_2}$, we consider an instance of two parallel arcs, each having a capacity of 1, i.e., $c_1 = 1 = c_2$. Suppose an adversary creates an instance where the first arriving user has a value-of-time $\theta'$. Then a deterministic algorithm can either allocate this user to arc one ($a_1$) or arc two ($a_2$).

First, suppose that a deterministic algorithm allocates the user on $a_1$ with a cost $t_1 \leq t_2$. Then an adversary can construct an instance such that the second user has value-of-time $\theta''$ and arrives at time $\epsilon>0$, where $\epsilon < t_1$. For this instance, a deterministic algorithm must allocate the second user on $a_2$ with a higher cost of $t_2$. For this input the cost of a deterministic algorithm is $\theta'' t_2 + \theta' t_1$; however the cost of the optimal allocation would have been $\theta'' t_1 + \theta' t_2$, which would result from allocating the user with value-of-time $\theta''$ on $a_1$ and the user of type $\theta'$ on $a_2$. Thus, for this instance $\mathcal{I}$, $\frac{ALG(\mathcal{I},\mathbf{A})}{OPT(\mathcal{I})} = \frac{\theta'' t_2 + \theta' t_1}{\theta'' t_1 + \theta' t_2}$.

On the other hand, suppose that a deterministic algorithm allocates the first user of type $\theta'$ on $a_2$. Then an adversary can construct an instance of just one user, i.e., no further users arrive into the system. For this instance $\mathcal{I}$, $\frac{ALG(\mathcal{I},\mathbf{A})}{OPT(\mathcal{I})} = \frac{t_2}{t_1} \geq \frac{\theta'' t_2 + \theta' t_1}{\theta'' t_1 + \theta' t_2}$ since the optimal algorithm would have allocated the user on $a_1$. Thus no deterministic algorithm can obtain a lower ratio of $\frac{ALG(\mathcal{I},\mathbf{A})}{OPT(\mathcal{I})}$ than $\frac{\theta'' t_2 + \theta' t_1}{\theta'' t_1 + \theta' t_2}$.

\paragraph{Greedy Achieves the Lower Bound on the $\frac{ALG(\mathcal{I},\mathbf{A})}{OPT(\mathcal{I})}$ Ratio for Deterministic Algorithms:} Now, to see that the greedy algorithm $\mathbf{A}$ achieves this competitive ratio for any network parameter set $\mathcal{S}$, we will prove that for any instance $\mathcal{I}$ that $\frac{ALG(\mathcal{I}, \mathbf{A})}{OPT(\mathcal{I})} \leq \frac{\theta'' t_2 + \theta' t_1}{\theta'' t_1 + \theta' t_2}$.

To prove this, we start from the definition of the competitive ratio of the greedy allocation, $\mathbf{A}$, to the optimal allocation, $OPT$:

\[ \frac{ALG(\mathcal{I}, \mathbf{A})}{OPT(\mathcal{I})} = \frac{\theta'' N_{1, \mathbf{A}, \theta''} t_1 + \theta' N_{1, \mathbf{A}, \theta'} t_1 + \theta'' N_{2, \mathbf{A}, \theta''} t_2 + \theta' N_{2, \mathbf{A}, \theta'} t_2}{\theta'' N_{1, OPT, \theta''} t_1 + \theta' N_{1, OPT, \theta'} t_1 + \theta'' N_{2, OPT, \theta''} t_2 + \theta' N_{2, OPT, \theta'} t_2} \]

Note that since the number of users belonging to the two values of time are fixed at $N_{\theta'}$ and $N_{\theta''}$ the above expression can be written as:

\begin{align} \label{eq:alg-opt-frac}
    \frac{ALG(\mathcal{I}, \mathbf{A})}{OPT(\mathcal{I})} = \frac{t_1 [\theta'' N_{1, \mathbf{A}, \theta''} t_1 + \theta' N_{1, \mathbf{A}, \theta'}] + t_2 [\theta'' (N_{\theta''} - N_{1, \mathbf{A}, \theta''}) + \theta' (N_{\theta'} - N_{1, \mathbf{A}, \theta'})]}{t_1 [\theta'' N_{1, OPT, \theta''} + \theta' N_{1, OPT, \theta'}] + t_2[\theta'' (N_{\theta''} - N_{1, OPT, \theta''}) + \theta' (N_{\theta'} - N_{1, OPT, \theta'}) ]}
\end{align}
The above equation holds since $N_{1, \mathbf{A}, k} + N_{2, \mathbf{A}, k} = N_k$ and $N_{1, OPT, k} + N_{2, OPT, k} = N_k$ for $k \in \{\theta', \theta'' \}$.

We now seek for the maximum possible ratio $\frac{ALG(\mathcal{I}, \mathbf{A})}{OPT(\mathcal{I})}$ for any instance, subject to the constraint that $$N_{1, \mathbf{A}, \theta''} + N_{1, \mathbf{A}, \theta'} \geq N_{1, OPT, \theta''} + N_{1, OPT, \theta''},$$ as the greedy algorithm will allocate at least as many users as $OPT$ does on the first arc, i.e., the arc with a lower cost as $t_1 \leq t_2$. This follows from the result of Lemma~\ref{lem:greed-opt}. 

To find an upper bound on the maximum ratio of $\frac{ALG(\mathcal{I}, \mathbf{A})}{OPT(\mathcal{I})}$, we first note that the inequality 
$$N_{1, \mathbf{A}, \theta''} + N_{1, \mathbf{A}, \theta'} \geq N_{1, OPT, \theta''} + N_{1, OPT, \theta''}$$ must be met with equality. This is because if $$N_{1, \mathbf{A}, \theta''} + N_{1, \mathbf{A}, \theta'} > N_{1, OPT, \theta''} + N_{1, OPT, \theta''},$$ then the numerator could be strictly increased or the denominator could be strictly decreased whilst maintaining $$N_{1, \mathbf{A}, \theta''} + N_{1, \mathbf{A}, \theta'} \geq N_{1, OPT, \theta''} + N_{1, OPT, \theta''}.$$ This also implies that the number of users allocated to arc two are the same.

From the above discussion, we have that the number of users allocated on arcs $a_1$ and $a_2$ are equal for both the greedy algorithm as well as $OPT$ to maximize the ratio of $\frac{ALG(\mathcal{I}, \mathbf{A})}{OPT(\mathcal{I})}$. Thus, the maximum ratio of $\frac{ALG(\mathcal{I}, \mathbf{A})}{OPT(\mathcal{I})}$ is achieved when the high value-of-time users are allocated on $a_2$ by the greedy algorithm while the same users are allocated on $a_1$ by the optimal offline solution.  Note that if even one high value-of-time user was allocated on $a_1$ then the bound for the competitive ratio would only be lower. This is because $ALG(\mathcal{I}, \mathbf{A})$ would be smaller. Next, for each high value-of-time user we must have at least one low value-of-time user that is allocated by the greedy algorithm on $a_1$. Since the number of users on $a_1$ and $a_2$ are the same, we have that:
\begin{align}
    \frac{ALG(\mathcal{I}, \mathbf{A})}{OPT(\mathcal{I})} &\leq \frac{\theta' (N_{\theta''} + N_{1, OPT, \theta'}) t_1 + \theta'' N_{\theta''} t_2 + \theta' (N_{\theta'} - N_{1, \mathbf{A}, \theta'}) t_2}{\theta'' N_{\theta''} t_1 + \theta' N_{1, OPT, \theta'} t_1 + \theta' (N_{\theta''} + N_{\theta'} - N_{1, \mathbf{A}, \theta'}) t_2}
\end{align}

The above ratio is identical to: $\frac{a+b}{c+b}$, where $b = \theta' (N_{\theta'} - N_{1, \mathbf{A}, \theta'}) t_2$, and $a \geq c$ as $\frac{ALG(\mathcal{I}, \mathbf{A})}{OPT(\mathcal{I})} \geq 1$. Taking the derivative of $\frac{a+b}{c+b}$ with respect to $b$, we get $\frac{c-a}{(c+b)^2} \leq 0$. Due to the sign of the derivative, the maximum is attained when $b = 0$, i.e., $N_{\theta'} = N_{1, \mathbf{A}, \theta'}$, which implies that:

\[ \frac{ALG(\mathcal{I}, \mathbf{A})}{OPT(\mathcal{I})} \leq \frac{\theta' N_{\theta'} t_1 + \theta'' N_{\theta''} t_2}{\theta'' N_{\theta''} t_1 + \theta' N_{1, OPT, \theta'} t_1 + \theta' N_{\theta''} t_2} \]

Now, since we have $N_{\theta'} = N_{\theta''} + N_{1, OPT, \theta'}$, we get that:
\[ \frac{ALG(\mathcal{I}, \mathbf{A})}{OPT(\mathcal{I})} \leq \frac{\theta' N_{\theta'} t_1 + \theta'' N_{\theta''} t_2}{\theta'' N_{\theta'} t_1 + \theta' N_{\theta''} t_2} \]
Taking the derivative of the above equation with respect to $N_{\theta'}$, we get that the derivative is negative, and, since $N_{\theta'} = N_{\theta''} + N_{1, OPT, \theta'}$, the maximum ratio is achieved when $N_{\theta'} = N_{\theta''}$. Thus, we get that:
\[ \frac{ALG(\mathcal{I}, \mathbf{A})}{OPT(\mathcal{I})} \leq \frac{\theta' N_{\theta''} t_1 + \theta'' N_{\theta''} t_2}{\theta'' N_{\theta''} t_1 + \theta' N_{\theta''} t_2} \]
This gives us that:

\[ \frac{ALG(\mathcal{I}, \mathbf{A})}{OPT(\mathcal{I})} \leq \frac{\theta'' t_2 + \theta' t_1}{\theta'' t_1 + \theta' t_2} \]
which proves our claim that for any instance $\mathcal{I}$ that $\frac{ALG(\mathcal{I}, \mathbf{A})}{OPT(\mathcal{I})} \leq \frac{\theta'' t_2 + \theta' t_1}{\theta'' t_1 + \theta' t_2}$. This establishes that the competitive ratio of greedy is $\frac{\theta'' t_2 + \theta' t_1}{\theta'' t_1 + \theta' t_2}$, since the competitive ratio of no deterministic algorithm can be lower than this. As a result we have established that the greedy algorithm as presented in Algorithm~\ref{alg:greedy} is the optimal deterministic algorithm, i.e., it achieves the best possible competitive ratio of $\frac{\theta'' t_2 + \theta' t_1}{\theta'' t_1 + \theta' t_2}$ amongst all deterministic algorithms. \qed

\subsection{Proof of Lemma~\ref{lem:general-road-networks}} \label{proof-general-road-networks}
To prove the lemma, we consider a series parallel road network as depicted in Figure~\ref{series-parallel-network-simple}, where the cost $t_4$ of arc $L_4$ can be arbitrary and the capacities of $L_2$ and $L_4$ can be large, e.g., 10 users can concurrently use each of $L_2$ and $L_4$. We consider a deterministic algorithm that allocates the first arriving user on arcs (i) $L_1, L_4$ or $L_2, L_4$, i.e., a route containing arc $L_4$, (ii) $L_1, L_3$, and (iii) $L_2, L_3$. We show that in each of these cases, there is an input sequence for which a deterministic algorithm will allocate at least one user on arc $L_4$ while the optimal offline allocation will not allocate users on arc $L_4$. Since the cost of arc $L_4$ can be made arbitrarily large, the ratio of the cost of any deterministic algorithm to the optimal offline solution is unbounded.

\begin{figure}[!h]
      \centering
      \includegraphics[width=0.7\linewidth]{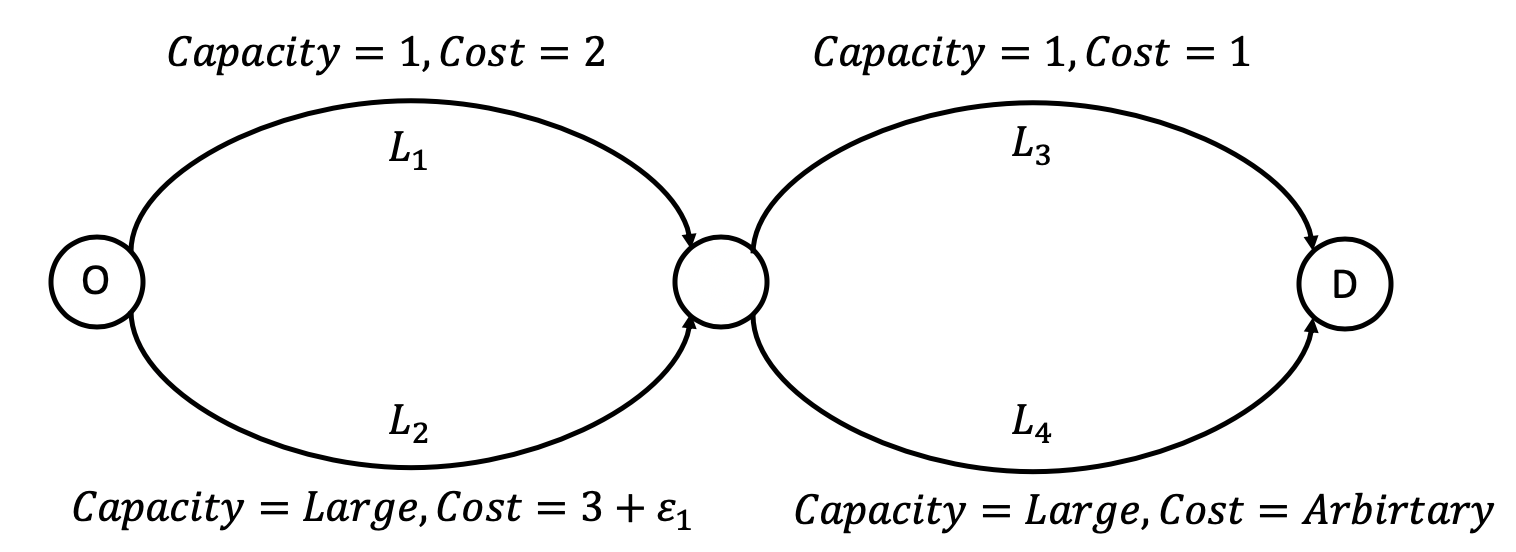}
      \caption{An example of a series parallel network, where the cost of arc $L_4$ can be chosen by an adversary. Here ``O'' and ``D'' denote the origin and destination nodes in this graph.}
      \label{series-parallel-network-simple}
   \end{figure}

\paragraph{Case (i):} First, suppose that a deterministic algorithm allocates the user arriving at time 0 on a route containing arc $L_4$; then, an adversary can consider an input of just one user, i.e., no further users enter the system. For this one user, the optimal offline algorithm would have allocated the user on the arcs $L_1, L_3$. Since the user is allocated on a route consisting of arc $L_4$, the ratio of the cost between such a deterministic algorithm and the optimal offline solution is unbounded.

\paragraph{Case (ii):} Next, suppose that a deterministic algorithm allocates the user arriving at time 0 on arcs $L_1, L_3$. Then an adversary can construct an input sequence where user one arrives at time 0, user two arrives at time $\epsilon$ and user three arrives at time $1+\epsilon'$, where $\epsilon' < \epsilon < \epsilon_1 \leq 0.1$. Note that $3+\epsilon_1$ is the cost of arc $L_2$. With this input a deterministic algorithm, having to satisfy the feasibility constraints~\eqref{eq:prob-constraint-general-road}-\eqref{eq:capacity-constraints-general-road}, will have to allocate one of users two or three onto $L_4$. On the other hand, the optimal offline solution would allocate the first user to arcs $L_2, L_3$, user two on arcs $L_1, L_3$ and user three on arcs $L_2, L_3$, and thus never use arc $L_4$. Thus, the ratio of the cost between such a deterministic algorithm and the optimal offline solution is unbounded.

\paragraph{Case (iii):} Finally, suppose that a deterministic algorithm allocates the user arriving at time 0 on arcs $L_2, L_3$. An adversary can then construct an input sequence wherein user two arrives at time $1-\epsilon''$ for $\epsilon'' \leq 0.1$. If a deterministic algorithm allocates this user on $L_2$, she must be routed on $L_2, L_4$ to ensure feasibility because $L_3$ has a capacity of one. On the other hand, if the second user is allocated on a route containing $L_1$, she must be allocated to route $L_1, L_4$. This is because user one uses $L_3$ for the time period $[3+\epsilon_1, 4+\epsilon_1]$, which coincides with the time at which user two exits $L_1$. Thus, the second user cannot use $L_1, L_3$ and will be allocated to $L_1, L_4$. On the other hand, the optimal offline solution would allocate user one on $L_1, L_3$ and user two on $L_2, L_3$, thereby never using $L_4$. Thus, the ratio of the cost between such a deterministic algorithm and the optimal offline solution is unbounded. \qed

\subsection{Proof of Lemma~\ref{lem:general-congestion-functions}} \label{proof-general-congestion-functions}
To prove this lemma, we consider a two arc network with arcs $a_1$, $a_2$ that have capacities $c_1 = 2$ and $c_2 = 1$. Furthermore, we let the cost of arc two be a constant $f_2(x) = 10.01$, and so $f_2(1) = 10.01$. Next, we let the cost of arc one be defined by a function such that $f_1(1) = 5$ and $f_1(2) = \Bar{d}$, where $\Bar{d}$ can be chosen by an adversary. Note that a linear function $f_1$ that passes through the points $(1, 5)$ and $(2, \Bar{d})$ will suffice.

We will show that an adversary can construct input sequences such that any \textit{feasible} deterministic algorithm $\mathbf{A}$ will allocate two users simultaneously on arc $a_1$ to incur a cost $\Bar{d}$, while the optimal allocation is to assign users such that no two users concurrently use arc one. As a result, the ratio of the cost between any deterministic algorithm and the optimal offline solution is unbounded.

Since in any such two arc network a \textit{feasible} deterministic algorithm $\mathbf{A}$ must allocate the first arriving user on one of the two arcs, we consider two cases.

\paragraph{Case (i):} Suppose $\mathbf{A}$ allocates the first arriving user to $a_2$. Then consider the following sequence, where user one arrives at time $0$, user two arrives at time $10$, and user three arrives at time $10.001$. Note that an optimal allocation for this sequence is to allocate user one on $a_1$, user two on $a_1$, and user three on $a_2$. Since user one departs from the system before user two arrives, the optimal allocation does not allocate two users concurrently on $a_1$. However, since $\mathbf{A}$ allocates user one on $a_2$, it must hold that both users two and three are allocated on $a_1$. To see this, we note that at the times users two and three arrive, user one is still on $a_2$ since the travel time of $a_2$ is strictly greater than the arrival time of user three. Since the capacity of $a_2$ is one, it must hold that $\mathbf{A}$ allocates both users two and three on $a_1$. Since $\Bar{d}$ can be arbitrarily large, we have that the ratio of the cost of such a deterministic algorithm to that of the optimal offline solution is unbounded.

\paragraph{Case (ii):} Suppose $\mathbf{A}$ allocates the first arriving user to $a_1$. Then consider the following input sequence, where user one arrives at time $0$, user two arrives at time $0.015$, user three arrives at time $5.02$, and user four arrives at time $10.01$. Note that an optimal allocation for this input sequence is to allocate user one on $a_2$, users two and three on $a_1$, and user four on $a_2$. Again, as with case (i), the optimal allocation does not allocate two users concurrently on $a_1$. However, since $\mathbf{A}$ allocates user one on $a_1$, it must be that at least two of the last three users are concurrently allocated on $a_1$. To see this, we note that if $\mathbf{A}$ allocates the second arriving user on $a_1$, then we have two users concurrently occupying $a_1$ thereby incurring an arbitrarily large cost. Thus, we consider the case when $\mathbf{A}$ allocates user two to $a_2$. Now, we observe that both users three and four must be allocated to $a_1$ since both users' arrival time is before the departure of user two from $a_2$. Since the capacity of $a_2$ is one, it must be that $\mathbf{A}$ allocates both of the last two users on $a_1$. Thus, again we have that the ratio between the total travel cost of the online solution of $\mathbf{A}$ to that of the optimal allocation is unbounded since $\Bar{d}$ can be arbitrarily large. \qed

\section{Generalizations of \acrshort{otr} Problem}

\subsection{Extending \acrshort{otr} Problem to General (Non-Parallel) Road Networks} \label{framework-general-road}
In this section, we present a generalization of the \acrshort{otr} problem for parallel networks to the setting of general road networks. To model a general road network, we consider a directed graph $G = (V, E)$, where $V$ is its set of vertices and $E$ is its set of edges. Travelling through arc $a \in E$ requires $t_a$ time units, and each arc $a$ has a capacity $c_a$. We collect these parameters in $\mathcal{S} := \{ \{ c_a \}_{a \in E}, \{ t_a \}_{a \in E} \}$. We denote an input sequence of $n$ users by $\mathcal{I} := \{ \{ \theta_i \}_{i \in [n]}, \{ \tau_i \}_{i \in [n]}, \{ w_i \}_{i \in [n]} \}$, stating for each user $i$ its time of appearance $\tau_i\in\mathbb{R}_{\ge0}$, its value-of-time $\theta_i \in \Theta$, where $\Theta$ is finite, and its origin destination pair $w_i \in W$. Each origin destination pair $w \in W$ has $R_w$ routes and each route $r \in R_w$ is specified by an ordered sequence of $l$ edges $\{e_1^r, e_2^r, ..., e_l^r \}$. Here edge $e_1^r$ is a directed edge starting at the origin node of $w$ and edge $e_l^r$ is a directed edge ending at the destination node of $w$. With this notation, the \acrlong{otr} (\acrshort{otr}) problem for general road networks is as follows.

\begin{problem}\label{prob:otr-general-networks} (\acrlong{otr} (\acrshort{otr}) for General Road Networks) Let the parameter set $\mathcal{S}$ and an input sequence $\mathcal{I}$ define a problem instance. Users arrive sequentially in the system and must be irrevocably assigned to a route $r \in R_{w_i}$ at their time of appearance $\tau_i$. The number of users concurrently traversing an arc is limited to the capacity $c_a$ of the arc, and each user traverses an arc $a$ for $t_a$ time units. Then, the \acrshort{otr} problem for general road networks is to construct an assignment of users to routes such that the total cost or time that users spend to traverse their routes is minimal.
\end{problem} 
\noindent If all users are identical, i.e., $\theta_i = \theta_{i'}$ for each $i, i' \in [n]$, we refer to this problem variant as \acrshort{otr-i} for general road networks. %

We evaluate the efficacy of an algorithm $\mathbf{A}$ that solves Problem~\ref{prob:otr-general-networks} via its competitive ratio 
$$CR(\mathbf{A}) = \max_{\mathcal{I} \in \Omega} \frac{ALG(\mathcal{I}, \mathbf{A})}{OPT(\mathcal{I})},$$
and say that an algorithm is $\alpha$-competitive if $CR(\mathbf{A}) \leq \alpha$. Here $OPT(\mathcal{I})$ denotes the optimal offline solution to the \acrshort{otr} problem for general road networks and $ALG(\mathcal{I}, \mathbf{A})$ denotes the online solution achieved by algorithm $\mathbf{A}$ on input sequence $\mathcal{I}$. The input instances $\mathcal{I}$ belong to a set $\Omega$, which corresponds to user arrivals such that $\theta_i \geq 0$ for all users $i$ and $\tau_i \geq 0$, with the first user arriving at time $0$. We further assume without loss of generality that $\tau_1 = 0 < \tau_2 < ... < \tau_n $, i.e., no two users arrive in the system at the same point in time.

We compute $OPT(\mathcal{I})$ by solving the following optimization problem.
\begin{mini!}|s|[2]<b>
	{\substack{x_{ir}, \\ \forall i \in [n], r \in \{R_w\}_{w \in W}}}{\sum_{i = 1}^{n} \sum_{a \in E} x_{ir} \theta_i t_a \delta_{a, r}\label{eq:OPT-integer-Obj-general-road}}
	{\label{eq:Example22222}}
	{}
	\addConstraint{\sum_{r = 1}^{|R_w|} x_{ir}}{=1 \quad \forall i \in [n] \label{eq:prob-constraint-general-road}}
	\addConstraint{x_{ir}}{\in \{0, 1\}, \quad \forall r \in [|R_{w_i}|, \forall i \in [n] ] \label{eq:binary-constraint-general-road}}
	\addConstraint{\sum_{i=1}^n \sum_{r:a \in r} \mathbbm{1}_{\tau_{k} \in [\tau_{i} + \sum_{a' \in r:a' < a} t_{a'}, \tau_{i} + \sum_{a' \in r:a' \leq a} t_{a'}]} x_{ir}}{\leq c_a, \quad \forall a \in E, \forall \tau_k \in \{\tau_1, ..., \tau_n \} \label{eq:capacity-constraints-general-road}}
\end{mini!}

Here we introduce binary variables $x_{ir}$ to denote whether user $i$ is assigned to route $r \in R_{w_i}$ ($x_{ir} = 1$) or not ($x_{ir}=0$). Further, we assume the constraints of this problem are feasible where, \eqref{eq:prob-constraint-general-road} are assignment constraints,~\eqref{eq:binary-constraint-general-road} are binary constraints and~\eqref{eq:capacity-constraints-general-road} are capacity constraints, and $\delta_{a, r}$ is the indicator function stating whether arc $a$ belongs to route $r$ ($\delta_{a, r}=1$) or not ($\delta_{a, r}=0$). In the capacity constraints, $\sum_{a' \in r:a' < a} t_{a'}$ denotes the sum of the travel times on edges preceding edge $a$ on route $r$, and $\sum_{a' \in r:a' \leq a} t_{a'}$ is the sum of the travel times on edges preceding as well as including edge $a$ on route $r$.

\subsection{Extending OTR Problem to General Cost Functions} \label{framework-general-congestion}
In this section, we present a generalization of the \acrshort{otr} problem to the setting when the travel time of the arcs is modelled by a cost function. As with the \acrshort{otr} problem, we consider a graph with $M \geq 2$ parallel arcs and two vertices representing an origin and a destination. Travelling through arc $a$ requires $f_a(y_a)$ time units, when $y_a$ users use arc $a$, and each arc $a$ has a capacity $c_a$. Here $f_a:\mathbb{R}_{\geq 0} \mapsto \mathbb{R}_{\geq 0}$ is non-decreasing in its argument. We collect these parameters in $\mathcal{S} := \{ \{ c_a \}_{a \in [M]}, \{ f_a \}_{a \in [M]} \}$. We denote an input sequence of $n$ users by $\mathcal{I} := \{ \{ \theta_i \}_{i \in [n]}, \{ \tau_i \}_{i \in [n]} \}$, stating for each user $i$ its time of appearance $\tau_i\in\mathbb{R}_{\ge0}$ and its value-of-time $\theta_i \in \Theta$, where $\Theta$ is finite. With this notation, the \acrshort{otr} problem with general cost functions is as follows.

\begin{problem}\label{prob:otr-general-congestion} (Online Traffic Routing (\acrshort{otr}) with General Cost Functions) Let the parameter set $\mathcal{S}$ and an input sequence $\mathcal{I}$ define a problem instance. Users arrive sequentially in the system and must irrevocably be assigned to an arc at their time of appearance $\tau_i$. The number of users concurrently traversing an arc is limited to the capacity $c_a$ of the arc, and each user traverses an arc $a$ for $f_a(y_a+1)$ time units, where $y_a$ users are using arc $a$ at time $\tau_i$. Then, the \acrshort{otr} problem for general cost functions is to construct an assignment of users to arcs such that the total cost or time that users spend to traverse an arc is minimal.
\end{problem} 
\noindent If all users are identical, i.e., $\theta_i = \theta_{i'}\ \forall\ i, i' \in [n]$, we refer to this problem variant as \acrshort{otr-i} with general cost functions. %

We evaluate the efficacy of an algorithm $\mathbf{A}$ that solves Problem~\ref{prob:otr-general-congestion} via its competitive ratio 
$$CR(\mathbf{A}) = \max_{\mathcal{I} \in \Omega} \frac{ALG(\mathcal{I}, \mathbf{A})}{OPT(\mathcal{I})},$$
and say that an algorithm is $\alpha$-competitive if $CR(\mathbf{A}) \leq \alpha$. Here $OPT(\mathcal{I})$ denotes the optimal offline solution to the \acrshort{otr} problem for general cost functions and $ALG(\mathcal{I}, \mathbf{A})$ denotes the online solution achieved by algorithm $\mathbf{A}$ on input sequence $\mathcal{I}$. The input instances $\mathcal{I}$ belong to a set $\Omega$, which corresponds to user arrivals such that $\theta_i \geq 0$ for all users $i$, and $\tau_i \geq 0$, with the first user arriving at time $0$. We further assume without loss of generality that $\tau_1 = 0 < \tau_2 < ... < \tau_n $, i.e., no two users arrive in the system at the same point in time, and that there exists a feasible assignment. 

We compute $OPT(\mathcal{I})$ by solving the following optimization problem.
\begin{mini!}|s|[2]<b>
	{\substack{x_{ia}, \\ \forall i \in [n], a \in [M]}}{\sum_{i = 1}^n \sum_{a = 1}^M x_{ij} \theta_i f_a \left(x_{ia} + y_a(\tau_i) \right) \label{eq:OPT-integer-Obj-general-congestion}}
	{\label{eq:Example32233}}
	{OPT(\mathcal{I})=}
	\addConstraint{\sum_{a = 1}^{M} x_{ia}}{=1 \quad \forall i \in [n] \label{eq:prob-constraint-general-congestion}}
	\addConstraint{x_{ia}}{\in \{0, 1\}, \quad \forall i\in [n], a \in [M] \label{eq:binary-constraint-general-congestion}}
	\addConstraint{\sum_{i=1}^n \mathbbm{1}_{\tau_{k} \in [\tau_{i}, \tau_{i} + f_j \left(x_{ia} + y_a(\tau_i) \right)]} x_{ia}}{\leq c_a, \quad \forall a \in [M], \forall \tau_k \in \{\tau_1, ..., \tau_n \} \label{eq:capacity-constraints-general-congestion}}
	\addConstraint{y_a(\tau_i)}{= \sum_{k=1}^n \mathbbm{1}_{\tau_{k} < \tau_i, \tau_k+f_a(x_{ka}+y_a(\tau_{k}))>\tau_i } x_{ia}, \quad \forall a \in [M], \forall \tau_k \in \{\tau_1, ..., \tau_n \} \label{eq:recursive-eqn}}
\end{mini!}

Here, we introduce binary variables $x_{ia}$ to denote whether user $i$ is assigned to arc $a$ ($x_{ia} = 1$) or not ($x_{ia}=0$). Further, \eqref{eq:prob-constraint-general-congestion} are assignment constraints,~\eqref{eq:binary-constraint-general-congestion} are binary constraints and~\eqref{eq:capacity-constraints-general-congestion} are capacity constraints. Equations~\eqref{eq:recursive-eqn} represent the total number of users on each arc $a$ at each time $\tau_i$ a user arrives.


\printglossary[type=\acronymtype]

\end{document}